\patchcmd{\abstract}{\scshape\abstractname}{\textbf{\abstractname}}{}{}
\DeclareMathAlphabet{\mathcal}{OMS}{cmsy}{m}{n}
\DeclareSymbolFont{operators}{OT1}{ztmcm}{m}{n}
\DeclareSymbolFont{letters}{OML}{ztmcm}{m}{it}
\DeclareSymbolFont{symbols}{OMS}{ztmcm}{m}{n}
\DeclareSymbolFont{largesymbols}{OMX}{ztmcm}{m}{n}
\DeclareSymbolFont{bold}{OT1}{ptm}{bx}{n}
\DeclareSymbolFont{italic}{OT1}{ptm}{m}{it}
\DeclareMathSymbol{\omicron}{0}{operators}{`\o}
\DeclareMathAlphabet{\mathpzc}{OT1}{pzc}{m}{it}
\DeclareSymbolFont{operators}{OT1}{txr}{m}{n}
\def\operator@font{\mathgroup\symoperators}
\DeclareSymbolFont{italic}{OT1}{txr}{m}{it}
\DeclareSymbolFontAlphabet{\mathrm}{operators}
\DeclareMathAlphabet{\mathbf}{OT1}{txr}{bx}{n}
\DeclareMathAlphabet{\mathit}{OT1}{txr}{m}{it}
\SetMathAlphabet{\mathit}{bold}{OT1}{txr}{bx}{it}
\DeclareSymbolFont{letters}{OML}{txmi}{m}{it}
\DeclareSymbolFont{lettersA}{U}{txmia}{m}{it}
\DeclareSymbolFontAlphabet{\mathfrak}{lettersA}
\DeclareSymbolFont{symbols}{OMS}{txsy}{m}{n}
\renewcommand\abstractname{\scshape\bfseries Abstract}
\renewenvironment{proof}[1][\proofname]{\par \pushQED{\qed} \normalfont
  \topsep6\p@\@plus6\p@ \trivlist \itemindent\z@
  \item[\hskip\labelsep\bfseries
    #1\@addpunct{.}]\ignorespaces
}{
  \popQED\endtrivlist\@endpefalse
}
    \renewcommand{\theequation}{{\thesection}.\@arabic\c@equation} 
\def\section{\@ifstar\unnumberedsection\numberedsection}
\def\numberedsection{\@ifnextchar[
  \numberedsectionwithtwoarguments\numberedsectionwithoneargument}
\def\unnumberedsection{\@ifnextchar[
  \unnumberedsectionwithtwoarguments\unnumberedsectionwithoneargument}
\def\numberedsectionwithoneargument#1{\numberedsectionwithtwoarguments[#1]{#1}}
\def\unnumberedsectionwithoneargument#1{\unnumberedsectionwithtwoarguments[#1]{#1}}
\def\numberedsectionwithtwoarguments[#1]#2{%
  \ifhmode\par\fi
  \removelastskip
  \vskip 4ex\goodbreak
  \refstepcounter{section}%
  \noindent
  \begingroup
  \leavevmode\centering\scshape\bfseries
  \thesection.
  #2
  \par
  \endgroup
  \vskip 1ex\nobreak
  \addcontentsline{toc}{section}{%
    \protect\numberline{\thesection}%
    #1}%
  }
\def\unnumberedsectionwithtwoarguments[#1]#2{%
  \ifhmode\par\fi
  \removelastskip
  \vskip 2ex\goodbreak
  \noindent
  \begingroup
  \leavevmode\centering\scshape\bfseries
  \leavevmode\centering\scshape\bfseries
  #2
  \par
  \endgroup
  \vskip 1ex\nobreak
  \addcontentsline{toc}{section}{%
    #1}%
}
\def\@seccntformat#1{\csname mythe#1\endcsname}
\let\latex@subsection\subsection
\def\subsection{\@ifstar{\refstepcounter{subsection}\latex@subsection*}{\latex@subsection}}
\def\@makechapterhead#1{%
  \vspace*{40\p@}%
  {\parindent \z@ \raggedright \normalfont
    \interlinepenalty\@M
    \Huge \bfseries #1\par \nobreak
    \vskip 40\p@
  }}
\let\latex@l@chapter\l@chapter
\def\l@chapter#1#2{\begingroup\let\numberline\@gobble\latex@l@chapter{#1}{#2}\endgroup}
\theoremstyle{plain}
\newtheorem{Th}{Theorem}[section]
\newtheorem{Prop}[Th]{Proposition}
\newtheorem{Lem}[Th]{Lemma}
\newtheorem{Cor}[Th]{Corollary}
\theoremstyle{definition}
\newtheorem{Rem}[Th]{Remark}
\newtheorem{Def}[Th]{Definition}
\def\bf{\textbf}
\def\it{\textit}
\def\te{\textnormal}
\def\tn{\textnormal}
\def\leq{\leqslant}
\def\geq{\geqslant}
\def\R{{\mathds R}}
\def\N{{\mathds N}}
\def\B{\textit{I\!B}}
\def\D{{\mathrm{dom}}\,}
\def\E{{\mathrm{epi}}\,}
\def\G{{\mathrm{gph}}\,}
\def\e{\mathsf{e}}
\def\kr{\bar}
\begin{document}
\title{Representation of Hamilton-Jacobi equation in optimal control theory with  unbounded control set}


\author{\vspace*{-0.2cm}{Arkadiusz Misztela \textdagger}\vspace*{-0.2cm}}
\thanks{\textdagger\, Institute of Mathematics, University of Szczecin, Wielkopolska 15, 70-451 Szczecin, Poland; e-mail: arkadiusz.misztela@usz.edu.pl}


\begin{abstract} 
In this paper we study the existence of sufficiently regular representations of  Hamilton-Jacobi  equations in the optimal control theory with unbounded control set.  We use a new method to construct representations for a wide class of Hamiltonians. This class is wider than  any constructed before, because we do not require Legendre-Fenchel conjugates of Hamiltonians to be bounded.  However, in this case we obtain representations with unbounded control set. We apply the obtained results to study regularities of value functions and correlations between variational and optimal control problems. \\
\vspace{0mm}

\hspace{-1cm}
\noindent  \bf{\scshape Keywords.} Hamilton-Jacobi equations, representations of Hamiltonians, optimal control \\\hspace*{-0.55cm}  theory,  parametrization of set-valued maps, convex analysis.

\vspace{3mm}\hspace{-1cm}
\noindent \bf{\scshape Mathematics Subject Classification.} 26E25, 49L25, 34A60, 46N10.
\end{abstract}

\maketitle

\pagestyle{myheadings}  \markboth{\small{\scshape Arkadiusz Misztela}
}{\small{\scshape Representations of Hamiltonians}}

\thispagestyle{empty}
%

\vspace{-1cm}



\section{Introduction}

\noindent The Hamilton-Jacobi equation 
\begin{equation}\label{rowhj}
\begin{array}{rll}
-V_{t}+ H(t,x,-V_{x})=0 &\;\tn{in}\;& (0,T)\times\R^n, \\
V(T,x)=g(x) &\;\tn{in}\;&\R^n,
\end{array}
\end{equation}
with a  convex Hamiltonian $H$  in the gradient variable can be studied with connection to 
calculus of variations problems. Let  $H^{\ast}$ be the Legendre-Fenchel conjugate of $H$ in its gradient variable:
\begin{equation*}\label{tran1}
H^{\ast}(t,x,v)= \sup_{p\in\R^{n}}\,\{\,\langle v,p\rangle-H(t,x,p)\,\}.
\end{equation*}
Then the value function of the calculus of variations problem defined by 
\begin{equation}\label{fwwp}
V(t_0,x_0)= \inf_{\begin{array}{c}
\scriptstyle x(\cdot)\,\in\,\mathcal{A}([t_0,T],\R^n)\\[-1mm]
\scriptstyle x(t_0)=x_0
\end{array}}\,\big\{\,g(x(T))+\int_{t_0}^TH^{\ast}(t,x(t),\dot{x}(t))\,dt\,\big\}
\end{equation}
is the unique viscosity solution of \eqref{rowhj}; see  \cite{C-S-2004,DM-F-V,HF,F-P-Rz,AM2,AM0,P-Q}, where $\mathcal{A}([t_0,T],\R^n)$ denotes the space of all absolutely continuous functions from $[t_0,T]$ into $\R^n$.

\vspace{2mm}
The Hamilton-Jacobi equation \eqref{rowhj}  can be also studied with connection to optimal control problems. It is possible, provided that there exists  a sufficiently regular triple $(A,f,l)$ satisfying the following equality
\begin{equation}\label{hfl}
H(t,x,p)=  \sup_{a\in A}\,\{\,\langle\, p\,,f(t,x,a)\,\rangle\,-\,l(t,x,a)\,\}.
\end{equation}
Then the value function of the optimal control problem defined by 
\begin{equation}\label{fwfl}
V(t_0,x_0)= \inf_{(x,\,a)(\cdot)\,\in\, \emph{S}_f(t_0,x_0)}\,\big\{\,g(x(T))+\int_{t_0}^Tl(t,x(t),a(t))\,dt\,\big\}
\end{equation}
is the unique viscosity solution of \eqref{rowhj}; see  \cite{C-S-2004,HF,F-P-Rz,B-CD,FR0}, where $\emph{S}_f(t_0,x_0)$ denotes the set of all trajectory-control pairs of the control system
\begin{equation}\label{scs}
\begin{array}{ll}
\dot{x}(t)=f(t,x(t),a(t)),& a(t)\in A\;\;\mathrm{a.e.}
\;\;t\in[t_0,T],\\
x(t_0)=x_0.&
\end{array}
\end{equation}
While working with optimal control problems we require $f$ and $l$ to be locally Lipschitz continuous functions with respect to $x$. In addition to this $f$ is to have the sublinear growth with respect to $x$ and $l$ is to have the sublinear growth with respect to $a$. This guarantees that to every integrable control $a(\cdot)$ on $[t_0,T]$ with values in a closed  subset $A$ of $\R^m$  there corresponds the unique solution $x(\cdot)$ of \eqref{scs} defined on $[t_0,T]$ and $l(\cdot,x(\cdot),a(\cdot))$ is an integrable function on $[t_0,T]$. 

\vspace{2mm}
The triple $(A,f,l)$, which satisfies the equality \eqref{hfl},  is called a  \it{representation} of $H$.
In general, if a representation of $H$ exists, then infinitely many other representations exist. There are also irregular representations among them. The triple $(A,f,l)$, which satisfies the equality \eqref{hfl} and inherits Lipschitz-type properties of the Hamiltonian $H$, is called a \it{faithful representation} of the Hamiltonian $H$

\vspace{2mm}
In this paper we provide further developments of representation theorems from  \cite{AM}. Misztela \cite{AM} studied  faithful representations of Hamiltonians with the compact control set. A necessary condition for the existence of such representations is boundedness of  Legendre-Fenchel conjugates of  Hamiltonians on effective domains; see \cite[Thm. 3.1]{AM}. However, in many cases Hamiltonians  do not have bounded  Legendre-Fenchel conjugates on effective domains. In Section \ref{section-3} we see that for this type of Hamiltonians there exist faithful representations with the unbounded control set. We used a new method to construct a faithful representation. 
Our representation $(A,f,l)$ of $H$ is an epigraphical representation, i.e. a triple
$(A,f,l)$ which satisfies the following condition 
\begin{equation}\label{crep}
\G H^{\ast}(t,x,\cdot)\subset(f(t,x,A),l(t,x,A))\subset\E H^{\ast}(t,x,\cdot),
\end{equation}
where $(f(t,x,A),l(t,x,A))$ denotes the set $\{(f(t,x,a),l(t,x,a))\mid a\in A\}$. The construction of this representation is as follows: first, using the Steiner Selection we parametrize the set $\E H^{\ast}(t,x,\cdot)$ in such a way that $\e(t,x,A)=\E H^{\ast}(t,x,\cdot)$. The Steiner Selection guarantees that $\e$ is  local Lipschitz continuous with respect to $x$. Next, we define the functions $f$, $l$ as components of the function $\e$, i.e. $\e=(f,l)$. From the equality $\e(t,x,A)=\E H^{\ast}(t,x,\cdot)$ it follows that \eqref{crep} holds. In view of [10, Prop. 5.7] any triple $(A,f,l)$ satisfying  \eqref{crep} is a representation of $H$.
Earlier, Frankowska-Sedrakyan \cite{F-S} and
Rampazzo \cite{FR} used a graphical representation to construct a faithful representation. The representation $(A,f,l)$ of $H$ is a graphical representation, if a triple $(A,f,l)$ satisfies  $(f(t,x,A),l(t,x,A))=\G H^{\ast}(t,x,\cdot)$. In a graphical representation the function $l$, without additional assumptions on $H^{\ast}$, may be discontinuous with respect to $(x,a)$; see Section~\ref{dfrentger}. Another differences between graphical and epigraphical representations can be found in \cite{AM}. Earlier,  Ishii \cite{HI} proposed a representation involving continuous functions $f$, $l$ with the infinite-dimensional control set $A$. The lack of local Lipschitz continuity of  $f$ and $l$ with respect to  $x$ and finite-dimensional control set $A$ in Ishii \cite{HI} paper causes troubles in applications. 

\vspace{2mm}
We present differences  between  representations with unbounded and compact control sets. The fact that a control set is not compact makes significant problems in applications which we discuss below. Therefore, compactness of a control set must be replaced by  another property that is convenient in practice. The following property which is a consequence of our construction of a faithful representation plays a role of such  extra-property: 
\begin{equation*}
a=(f(t,x,a),l(t,x,a))\quad\tn{for all}\quad a\in\E H^{\ast}(t,x,\cdot).
\end{equation*}
Our extra-property is apparently new. In literature one usually requires coercivity of the function  $l(t,x,\cdot)$; see, e.g. \cite[Condition ($A_4$)]{FR0}. However, the function $l(t,x,\cdot)$ from our faithful representation $(A,f,l)$ does not have this property.  Coercivity of the function  $l(t,x,\cdot)$  enables us to study not only measurability of controls but also its integrability.  In this paper the extra-property plays a similar role; see Remarks \ref{extraproperty} and \ref{cor-bvocp-rem}. It is well-known that in applications one requires at least integrability of controls. In the case when the control set is compact the above problem does not occur, because every measurable control with values in
the compact control set is integrable.

\vspace{2mm}
In general, the value functions \eqref{fwwp} and \eqref{fwfl} are not equal. However, in our case these value functions are identical due to the extra-property; see Corollary \ref{cor-reduct}. Moreover, we obtain a  fundamental relation between variational and optimal control problems; see Theorem \ref{thm-reduct}. More precisely, we consider a variational problem associated with the given Lagrangian $L$. We define Hamiltonian $H$ as  the Legendre-Fenchel transform of $L$ in its velocity variable. Applying our result to Hamiltonian $H$ we obtain its faithful representation $(A,f,l)$. Then the variational  problem associated with Lagrangian $L$ is equivalent to the optimal control problem associated with the triple $(A,f,l)$. Earlier, Olech \cite{CO-69} and Rockafeller \cite{RTR73,RTR} investigated the opposite problem. They considered the optimal control problem associated with the given triple  $(A,f,l)$. Using this triple they defined Lagrangian $L$ in such a way that  the optimal control problem associated with the triple  $(A,f,l)$ is equivalent to the variational problem associated with Lagrangian $L$.  

\vspace{2mm}
Our faithful representations are stable; see Theorems \ref{cor-rep-stab2} and \ref{cor-rep-stab4}. This fact is used in the proof of stability of value functions; see Section \ref{thm-reguvalufun}. The method of this proof is not standard, because  properties of a faithful representation are nonstandard.  These nonstandard properties are unbounded control set, the extra-property and the sublinear growth of  $l$ with respect to $a$. In this case one cannot apply methods from Sedrakyan \cite{HS} to prove stability of value functions. Indeed, this method 
uses compactness of the control set and boundedness of $l$ independent of $a$. We also prove that the value function $V$ is locally Lipschitz continuous, provided that $g$ is locally Lipschitz continuous. In the proof of this fact nonstandard boundedness of the function $l$ plays significant role.

\vspace{2mm}
The outline of the paper is as follows. Section \ref{section-2} contains hypotheses and preliminary results. In Section \ref{dfrentger} we show differences between graphical and epigraphical representations with the unbounded control set. In Section~\ref{section-3} we gathered our main results. Sections \ref{thm-profsrst}  and \ref{thm-reguvalufun} contain proofs of results from Section~\ref{section-3}. Section \ref{conrem-conclude} contains concluding remarks.



\section{Hypotheses and background material}\label{section-2}

\noindent We will need hypotheses and results similar to those in \cite[Sect. 2]{AM}.

\begin{enumerate}[leftmargin=9.7mm]
\item[\tn{\bf{(H1)}}] $H:[0,T]\times\R^{n}\times\R^{n}\rightarrow\R$
is Lebesgue measurable  in $t$ for any $x,p\in\R^n$;
\item[\tn{\bf{(H2)}}] $H(t,x,p)$ is continuous with respect to $(x,p)$ for every $t\in[0,T]$;
\item[\tn{\bf{(H3)}}] $H(t,x,p)$ is convex with respect to $p$ for every $(t,x)\in[0,T]\times\R^n$;
\item[\tn{\bf{(H4)}}] There exists a measurable map $c:[0,T]\to[0,+\infty)$ such that for every\\ 
\hspace*{-8.5mm}$t\in[0,T]$ and $x,p,q\in\R^n$ one has $|H(t,x,p)-H(t,x,q)|\leq c(t)(1+|x|)|p-q|$.
\end{enumerate}

Let $\varphi$ be an extended-real-valued function. The sets: $\D\varphi=\{\,z\in\R^m\mid\varphi(z)\not=\pm\infty\,\}$, $\G\varphi=\{\,(z,r)\in\R^m\times\R\mid\varphi(z)=r\,\}$ and $\E\varphi=\{\,(z,r)\in\R^m\times\R\mid\varphi(z)\leq r\,\}$ are called the \emph{effective domain}, the \it{graph} and the \it{epigraph} of $\varphi$, respectively. We say that $\varphi$ is \it{proper} if it never takes  the value $-\infty$ and it is not identically equal to $+\infty$.  Using properties of the Legendre-Fenchel conjugate from~\cite{R-W} we can prove the following proposition.

\begin{Prop}\label{prop2-fmw} Assume that $H$ satisfies \tn{(H1)-(H3)}. Then
\begin{enumerate}
\item[\tn{\bf{(C1)}}] $H^{\ast}:[0,T]\times\R^{n}\times\R^{n}\rightarrow\R\cup\{+\infty\}$ is Lebesgue-Borel-Borel measurable\te{;}
\item[\tn{\bf{(C2)}}] $H^{\ast}(t,x,v)$  is  lower semicontinuous  with respect to $(x,v)$ for every $t\in[0,T]$\tn{;}
\item[\tn{\bf{(C3)}}] $H^{\ast}(t,x,v)$ is convex and proper with respect to $v$ for every $(t,x)\in[0,T]\times\R^n$\tn{;}
\item[\tn{\bf{(C4)}}] $\forall\,(t,x,v)\in[0,T]\times\R^n\times\R^n\;\;\forall\,x_i\rightarrow x\;\;
\exists\,v_i\rightarrow v\;:\;H^{\ast}(t,x_i,v_i)\rightarrow H^{\ast}(t,x,v)$\tn{.}
\item[]\hspace{-1.3cm}Additionally, if $H$ satisfies \tn{(H4)}, then
\item[\tn{\bf{(C5)}}] $\forall\,(t,x,v)\in[0,T]\times\R^n\times\R^n \;:\; |v|>c(t)(1+|x|)\;\Rightarrow\; H^{\ast}(t,x,v) =+\infty$\tn{.}
\item[]\hspace{-1.3cm}Additionally, if $H$ is continuous, then $H^{\ast}$ is lower semicontinuous and
\item[\tn{\bf{(C6)}}] $\forall\,(t,x,v)\in[0,T]\times\R^n\times\R^n\;\;\forall\,(t_i,x_i)\rightarrow (t,x)\;\;\exists\,v_i\rightarrow v\;:\;H^{\ast}(t_i,x_i,v_i)\rightarrow H^{\ast}(t,x,v)$.
\end{enumerate}
\end{Prop}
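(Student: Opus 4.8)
The plan is to verify the six assertions (C1)--(C6) about $H^{\ast}$ by translating each hypothesis on $H$ into the language of the Legendre--Fenchel conjugate, using the standard calculus of conjugates from Rockafellar--Wets \cite{R-W}. Throughout, the key viewpoint is that for fixed $t$ the map $(x,p)\mapsto H(t,x,p)$ is a finite continuous function that is convex in $p$, so $H^{\ast}(t,x,\cdot)=(H(t,x,\cdot))^{\ast}$ is the conjugate of a finite convex function on $\R^n$; and for fixed $x$ the map $(t,p)\mapsto H(t,x,p)$ is a (Lebesgue-measurable in $t$, continuous in $p$, hence Carathéodory) normal integrand, whose conjugate is again a normal integrand.

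For (C1), I would note that by (H1)--(H2), $H$ is a Carathéodory function, hence jointly ($\mathcal{L}\otimes\mathcal{B}$)-measurable; freezing $t$, $H(t,\cdot,\cdot)$ is a continuous finite convex-in-$p$ integrand, so the partial conjugate $H^{\ast}(t,x,v)=\sup_{p\in\Q^n}\{\langle v,p\rangle - H(t,x,p)\}$ (the sup over $\R^n$ equals the sup over the countable dense set $\Q^n$ by continuity of $H$ in $p$) is a countable supremum of Carathéodory functions of $(t,x,v)$, giving Lebesgue--Borel--Borel measurability. For (C2), lower semicontinuity of $H^{\ast}(t,\cdot,\cdot)$ for fixed $t$ is immediate from the same representation as a supremum over $p$ of the continuous functions $(x,v)\mapsto\langle v,p\rangle - H(t,x,p)$. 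For (C3), fix $(t,x)$: $H(t,x,\cdot)$ is finite and convex on $\R^n$, so by the basic conjugacy theorem its conjugate $H^{\ast}(t,x,\cdot)$ is convex, lower semicontinuous, and proper (properness because the conjugate of a proper convex function is proper; finiteness of $H(t,x,\cdot)$ rules out the value $-\infty$ for the conjugate since $H^{\ast\ast}=H$ would fail otherwise).

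For (C4), this is the ``epi-convergence / epi-continuity'' statement: given $x_i\to x$, the finite convex functions $H(t,x_i,\cdot)$ converge pointwise to $H(t,x,\cdot)$ by (H2), and pointwise convergence of finite convex functions on $\R^n$ implies their epi-convergence (Rockafellar--Wets, convergence theory for convex functions); epi-convergence is preserved under the conjugation operation (Wijsman's theorem / \cite[Thm.~11.34]{R-W}), so $H^{\ast}(t,x_i,\cdot)$ epi-converges to $H^{\ast}(t,x,\cdot)$, and unpacking the definition of epi-convergence at the point $v$ yields exactly the existence of $v_i\to v$ with $H^{\ast}(t,x_i,v_i)\to H^{\ast}(t,x,v)$ (the ``recovery sequence'' part of epi-convergence; the liminf part is (C2)). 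For (C5), fix $(t,x)$ and suppose $|v|>c(t)(1+|x|)$; by (H4), $H(t,x,\cdot)$ is Lipschitz with constant $L:=c(t)(1+|x|)$, hence $\partial H(t,x,p)\subset \kr B(0,L)$ for all $p$, so $v\notin\partial H(t,x,p)$ for any $p$, equivalently $v$ is not in $\D H^{\ast}(t,x,\cdot)=\te{range of }\partial H(t,x,\cdot)$ (using that the effective domain of the conjugate of a finite convex function is contained in the closure of the union of subdifferentials, and more sharply that $\|v\|>L$ forces $\langle v,p\rangle - H(t,x,p)\to+\infty$ along a suitable direction); thus $H^{\ast}(t,x,v)=+\infty$. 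For (C6), assuming $H$ continuous (jointly in $(t,x,p)$), the same epi-convergence argument as in (C4) applies with $(t_i,x_i)\to(t,x)$ in place of $x_i\to x$: $H(t_i,x_i,\cdot)\to H(t,x,\cdot)$ pointwise hence epi-converges, conjugation preserves this, and the recovery-sequence property gives $v_i\to v$ with $H^{\ast}(t_i,x_i,v_i)\to H^{\ast}(t,x,v)$; lower semicontinuity of $H^{\ast}$ follows from the joint-continuity representation as a sup over $p$ of continuous functions.

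The main obstacle is (C4)/(C6): one must be careful that pointwise convergence of the finite convex functions $H(t,x_i,\cdot)$ genuinely upgrades to epi-convergence (this uses finiteness/local boundedness, so that pointwise convergence of convex functions is automatically locally uniform) and that the conjugation operation is epi-continuous in the required direction; getting the quantifier structure of (C4) to match the ``limsup'' clause of epi-convergence exactly is the delicate bookkeeping. Everything else is a routine application of standard facts about conjugates of finite convex functions and of normal integrands from \cite{R-W}.
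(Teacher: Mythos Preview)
Your proposal is correct and follows essentially the same route as the paper: the paper's own proof is simply the one-line remark that the proposition follows from the properties of the Legendre--Fenchel conjugate in \cite{R-W}, and your sketch spells out precisely those properties (normal-integrand measurability for (C1), sup-of-continuous for (C2), basic conjugacy for (C3), Wijsman's theorem \cite[Thm.~11.34]{R-W} for (C4)/(C6), and the Lipschitz bound on the domain of the conjugate for (C5)). In fact the paper later invokes Wijsman's theorem explicitly (Lemma~\ref{klconv}), confirming that your epi-convergence argument for (C4)/(C6) is exactly what is intended.
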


Let $K$ be a nonempty subset of $\R^m$. We put $\|K\|:=\sup_{\xi\in K}|\xi|$. The distance from $y\in\R^m$ to $K$ is defined by $d(y,K):=\inf_{\xi\in K}|y-\xi|$.  A set-valued map $F:[0,T]\multimap\R^m$ is \it{measurable} if for each open set $U\subset\R^m$ the inverse image  $F^{-1}(U):= \{\,t\in[0,T]\mid F(t)\cap U\not=\emptyset\,\}$
is Lebesgue measurable set. The set $\G F:= \{\,(z,y)\mid y\in F(z)\,\}$ is called a \it{graph} of the set-valued map $F$. A set-valued map $F:\R^n\multimap\R^m$ is \it{lower semicontinuous} in  Kuratowski's sense if for each open set $U\subset\R^m$ the set $F^{-1}(U)$ is open.  

\vspace{2mm}
Let us define the set-valued map $E_{H^{\ast}}:[0,T]\times\R^n\multimap\R^n\times\R$ by the formula
\begin{equation*}
E_{H^{\ast}}(t,x):=\E H^{\ast}(t,x,\cdot)= \{\,(v,\eta)\in\R^n\times\R\,\mid\, H^{\ast}(t,x,v)\leq \eta\,\}.
\end{equation*}

\vspace{2mm}
From Proposition \ref{prop2-fmw} and Results in \cite[Chap. 14]{R-W} we deduce the following corollary.

\begin{Cor}\label{wrow-wm}
Assume that $H$ satisfies \tn{(H1)-(H3)}. Then
\begin{enumerate}
\item[\tn{\bf{(E1)}}] $E_{H^{\ast}}(t,x)$ is a nonempty, closed, convex subset of $\;\R^{n+1}$ for all $(t,x)\in[0,T]\times\R^n$\tn{;}
\item[\tn{\bf{(E2)}}] $x\to E_{H^{\ast}}(t,x)$ has a closed graph for every $t\in[0,T]$\tn{;}
\item[\tn{\bf{(E3)}}] $x\to E_{H^{\ast}}(t,x)$ is lower semicontinuous for every  $t\in[0,T]$\tn{;}
\item[\tn{\bf{(E4)}}] $t\to E_{H^{\ast}}(t,x)$ is measurable for every $x\in\R^n$\tn{.}
\item[]\hspace{-1.3cm}Additionally, if $H$ satisfies \tn{(H4)}, then
\item[\tn{\bf{(E5)}}] $\|\D {H^{\ast}}(t,x,\cdot)\|\leq c(t)(1+|x|)$ for every $(t,x)\in[0,T]\times\R^n$\tn{.}
\item[]\hspace{-1.3cm}Additionally, if  $H$ is continuous, then
\item[\tn{\bf{(E6)}}] $(t,x)\to E_{H^{\ast}}(t,x)$ has a closed graph and is lower semicontinuous.
\end{enumerate}
\end{Cor}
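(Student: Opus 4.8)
The plan is to set up the elementary dictionary between an extended-real-valued function and its epigraph and then to read off each of (E1)--(E6) from the corresponding item of Proposition~\ref{prop2-fmw}, using the epigraphical calculus from \cite[Chap.~14]{R-W}. Concretely, $E_{H^{\ast}}(t,x)$ is by definition the epigraph of $v\mapsto H^{\ast}(t,x,v)$; the graph of the multifunction $x\mapsto E_{H^{\ast}}(t,x)$ (for fixed $t$) is the epigraph of $(x,v)\mapsto H^{\ast}(t,x,v)$; and the graph of $(t,x)\mapsto E_{H^{\ast}}(t,x)$ is the epigraph of $(t,x,v)\mapsto H^{\ast}(t,x,v)$. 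Under this correspondence, closedness of an epigraph is lower semicontinuity of the function, convexity of an epigraph is convexity of the function, nonemptiness is properness (together with the absence of the value $-\infty$), and measurability of the epigraphical multifunction is normality of the integrand.

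With this in hand, (E1) is immediate: properness of $H^{\ast}(t,x,\cdot)$ from (C3) gives nonemptiness, lower semicontinuity of $H^{\ast}(t,x,\cdot)$ (a consequence of (C2)) gives closedness, and convexity from (C3) gives convexity. For (E2), the graph of $x\mapsto E_{H^{\ast}}(t,x)$ equals the epigraph of $(x,v)\mapsto H^{\ast}(t,x,v)$, which is closed precisely because that function is lower semicontinuous by (C2); the closed-graph half of (E6) is the same statement for $(t,x,v)\mapsto H^{\ast}(t,x,v)$, which is lower semicontinuous when $H$ is continuous by Proposition~\ref{prop2-fmw}. For (E5), (C5) says that $|v|>c(t)(1+|x|)$ forces $H^{\ast}(t,x,v)=+\infty$, i.e. $v\notin\D H^{\ast}(t,x,\cdot)$, hence $\D H^{\ast}(t,x,\cdot)\subset\{v:|v|\leq c(t)(1+|x|)\}$ and $\|\D H^{\ast}(t,x,\cdot)\|\leq c(t)(1+|x|)$. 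To obtain the lower semicontinuity in (E3) I would verify the sequential criterion recalled before the corollary: given $(x,(v,\eta))$ with $H^{\ast}(t,x,v)\leq\eta$ and $x_i\to x$, apply (C4) to produce $v_i\to v$ with $H^{\ast}(t,x_i,v_i)\to H^{\ast}(t,x,v)$ and set $\eta_i:=\max\{\eta,H^{\ast}(t,x_i,v_i)\}$; then $(v_i,\eta_i)\in E_{H^{\ast}}(t,x_i)$ and $(v_i,\eta_i)\to(v,\eta)$, which is exactly what is required. The same argument with (C6) and $(t_i,x_i)\to(t,x)$ in place of (C4) and $x_i\to x$ yields the lower semicontinuity half of (E6).

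The only item that is not a mechanical translation is the measurability (E4), which I expect to be the main (though still routine) point. By (C1) the function $(t,x,v)\mapsto H^{\ast}(t,x,v)$ is Lebesgue-Borel-Borel measurable, so for fixed $x$ the map $(t,v)\mapsto H^{\ast}(t,x,v)$ is jointly measurable in the product sense, and by (C2)--(C3) it is lower semicontinuous and convex in $v$; hence it is a normal integrand in $(t,v)$, and by the results on normal integrands and their epigraphical multifunctions in \cite[Chap.~14]{R-W} the map $t\mapsto\E H^{\ast}(t,x,\cdot)=E_{H^{\ast}}(t,x)$ is measurable. The point requiring care is to match the measurability notion used in \cite{R-W} with the closed-valued measurability recalled before the corollary (equivalently, measurability of $t\mapsto d(y,E_{H^{\ast}}(t,x))$ for each $y\in\R^{n+1}$), and to check that the product-measurability furnished by (C1) is indeed what the normal-integrand machinery requires; this reconciliation is where the only genuine effort lies.
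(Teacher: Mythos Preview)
Your proposal is correct and follows exactly the approach indicated in the paper, which simply states that the corollary is deduced ``From Proposition~\ref{prop2-fmw} and Results in \cite[Chap.~14]{R-W}'' without spelling out details. You have in fact supplied more detail than the paper itself, including the explicit sequential verification of (E3) and (E6) from (C4) and (C6), and the normal-integrand reduction for (E4); all of this is sound and matches the intended derivation.
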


Now we present Hausdorff continuity of a set-valued map $E_{H^{\ast}}$ in Hamiltonian and its conjugate terms. Let $\B(\kr{x},R)$ denote the closed ball in $\R^n$ of center $\kr{x}$ and radius $R\geq 0$. We put $\B_R:=\B(0,R)$ and $\B:=\B(0,1)$. 

\begin{Th}[\tn{\cite[Thm. 2.3]{AM}}]\label{tw2_rlhmh}
Assume that $H$ satisfies \tn{(H1)-(H3)}. Then the following conditions are equivalent:

\tn{\bf{(HLC)}} For any $R>0$ there exists a measurable map $k_R:[0,T]\to[0,+\infty)$ such that $|\,H(t,x,p)-H(t,y,p)\,|\,\leq\, k_R(t)\,(1+|p|)\,|x-y|$ for all $t\in[0,T]$, $x,y\in\B_R$, $p\in\R^n$.

\tn{\bf{(CLC)}} For any $R>0$ there exists a measurable map $k_R:[0,T]\to[0,+\infty)$  such that for all $t\in[0,T]$, $x,y\in \B_R$, $v\in\D H^{\ast}(t,x,\cdot)$ there exists $u\in\D H^{\ast}(t,y,\cdot)$ satisfying inequalities $|u-v|\leq k_R(t)|y-x|$ and $H^{\ast}(t,y,u)\leq H^{\ast}(t,x,v)+k_R(t)|y-x|$.

\tn{\bf{(ELC)}} For any $R>0$ there exists a measurable map $k_R:[0,T]\to[0,+\infty)$ such that 
$E_{H^{\ast}}(t,x)\,\subset\, E_{H^{\ast}}(t,y)+k_R(t)\,|x-y|\,(\B\times[-1,1])$ for all $t\in[0,T]$, $x,y\in\B_R$.

Equivalences hold for the same map $k_R(\cdot)$.
\end{Th}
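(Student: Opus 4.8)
The strategy is to prove the cyclic chain of implications $\textnormal{(HLC)}\Rightarrow\textnormal{(ELC)}\Rightarrow\textnormal{(CLC)}\Rightarrow\textnormal{(HLC)}$, keeping track of the fact that the same modulus $k_R(\cdot)$ works throughout. Fix $R>0$ and recall that, by Corollary \ref{wrow-wm}, for every $(t,x)$ the set $E_{H^{\ast}}(t,x)$ is the epigraph (viewed in $\R^{n+1}$) of the proper lower semicontinuous convex function $H^{\ast}(t,x,\cdot)$, and that $\D H^{\ast}(t,x,\cdot)$ is precisely the projection of $E_{H^{\ast}}(t,x)$ onto $\R^n$.

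First I would prove $\textnormal{(CLC)}\Rightarrow\textnormal{(ELC)}$. Take $(v,\eta)\in E_{H^{\ast}}(t,x)$, so $v\in\D H^{\ast}(t,x,\cdot)$ and $H^{\ast}(t,x,v)\leq\eta$. Apply (CLC) to get $u\in\D H^{\ast}(t,y,\cdot)$ with $|u-v|\leq k_R(t)|x-y|$ and $H^{\ast}(t,y,u)\leq H^{\ast}(t,x,v)+k_R(t)|x-y|\leq\eta+k_R(t)|x-y|$. Then $(u,\eta+k_R(t)|x-y|)\in E_{H^{\ast}}(t,y)$ and $(v,\eta)-(u,\eta+k_R(t)|x-y|)=(v-u,-k_R(t)|x-y|)\in k_R(t)|x-y|(\B\times[-1,1])$, giving the desired inclusion. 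For the converse $\textnormal{(ELC)}\Rightarrow\textnormal{(CLC)}$, given $v\in\D H^{\ast}(t,x,\cdot)$ put $\eta:=H^{\ast}(t,x,v)$, so $(v,\eta)\in E_{H^{\ast}}(t,x)$; by (ELC) there is $(u,\theta)\in E_{H^{\ast}}(t,y)$ with $|v-u|\leq k_R(t)|x-y|$ and $|\eta-\theta|\leq k_R(t)|x-y|$, and then $u\in\D H^{\ast}(t,y,\cdot)$ with $H^{\ast}(t,y,u)\leq\theta\leq\eta+k_R(t)|x-y|$. So (CLC) and (ELC) are essentially restatements of each other.

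The substantive implications are those connecting $H$ with its conjugate. For $\textnormal{(HLC)}\Rightarrow\textnormal{(CLC)}$ I would argue via Legendre-Fenchel duality: fix $t,x,y\in\B_R$ and $v\in\D H^{\ast}(t,x,\cdot)$, and seek $u$ near $v$ in $\D H^{\ast}(t,y,\cdot)$ with the claimed estimate on $H^{\ast}(t,y,u)$. The clean approach is to bound $\inf_{|u-v|\leq k_R(t)|x-y|}H^{\ast}(t,y,u)$ from above by $H^{\ast}(t,x,v)+k_R(t)|x-y|$ using the inf-convolution / conjugate-of-a-sum machinery: since $H(t,y,\cdot)\leq H(t,x,\cdot)+k_R(t)|x-y|(1+|\cdot|)$ pointwise by (HLC), conjugation reverses the inequality and turns the additive term $k_R(t)|x-y|(1+|\cdot|)$ into an infimal convolution with the conjugate of $p\mapsto k_R(t)|x-y|(1+|p|)$, which is the indicator of the ball of radius $k_R(t)|x-y|$ shifted by the constant $-k_R(t)|x-y|$. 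This yields $H^{\ast}(t,y,u)\leq H^{\ast}(t,x,v)+k_R(t)|x-y|$ for some $u$ with $|u-v|\leq k_R(t)|x-y|$; one must also check $u\in\D H^{\ast}(t,y,\cdot)$, which is immediate since the right-hand side is finite ($v\in\D H^{\ast}(t,x,\cdot)$). Finally, for $\textnormal{(CLC)}\Rightarrow\textnormal{(HLC)}$ I would write $H(t,x,p)=\sup_{v}\{\langle p,v\rangle-H^{\ast}(t,x,v)\}$ (biconjugation, valid by (H1)-(H3)), pick for given $\varepsilon>0$ a near-maximizing $v\in\D H^{\ast}(t,x,\cdot)$, use (CLC) to produce $u\in\D H^{\ast}(t,y,\cdot)$ with the two estimates, and then bound $H(t,y,p)\geq\langle p,u\rangle-H^{\ast}(t,y,u)\geq\langle p,v\rangle-H^{\ast}(t,x,v)-k_R(t)|x-y|(1+|p|)\geq H(t,x,p)-\varepsilon-k_R(t)|x-y|(1+|p|)$; letting $\varepsilon\to0$ and swapping the roles of $x$ and $y$ gives (HLC).

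The main obstacle is the duality step in $\textnormal{(HLC)}\Rightarrow\textnormal{(CLC)}$: one needs the conjugate of the sum $H(t,x,\cdot)+k_R(t)|x-y|(1+|\cdot|)$ to equal the infimal convolution of the two conjugates, and more importantly one needs this infimal convolution to be \emph{exact} (attained) so that an actual point $u$ — not merely an infimizing sequence — with $|u-v|\leq k_R(t)|x-y|$ is produced. Exactness follows because the conjugate of $p\mapsto k_R(t)|x-y|(1+|p|)$ has compact effective domain (the closed ball of radius $k_R(t)|x-y|$), so the infimal convolution with the lower semicontinuous $H^{\ast}(t,x,\cdot)$ is attained; this is the standard fact that inf-convolving with a function of bounded domain preserves lower semicontinuity and attains its infimum. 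Once this point is handled, everything else is routine manipulation of suprema and the definitions, and the bookkeeping of $k_R(\cdot)$ is automatic since no constant is ever enlarged.
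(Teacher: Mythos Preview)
Your proof is correct. The cyclic scheme $\textnormal{(CLC)}\Leftrightarrow\textnormal{(ELC)}$ is an immediate unpacking of definitions, and your two substantive steps are sound: in $\textnormal{(HLC)}\Rightarrow\textnormal{(CLC)}$ you correctly identify that $(H(t,y,\cdot)+g)^{\ast}$ with $g(p)=k_R(t)|x-y|(1+|p|)$ equals the exact infimal convolution $H^{\ast}(t,y,\cdot)\,\square\,g^{\ast}$ because $g^{\ast}$ has compact effective domain (the closed ball of radius $k_R(t)|x-y|$, shifted by $-k_R(t)|x-y|$), which guarantees both lower semicontinuity of the inf-convolution and attainment of the infimum; and in $\textnormal{(CLC)}\Rightarrow\textnormal{(HLC)}$ the biconjugate argument with an $\varepsilon$-maximizer is routine and correct, using only that $H(t,x,\cdot)$ is finite-valued convex (hence equals its biconjugate). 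No constants are inflated, so the same $k_R(\cdot)$ serves throughout, as claimed.

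As for comparison: the paper does not supply its own proof of this statement. The theorem is quoted verbatim as \cite[Thm.~2.3]{AM} and used as a black box (notably to derive Corollary~\ref{hlc-cor-ner}). So there is no internal argument to contrast with yours; your write-up is a genuine, self-contained proof where the paper relies on the external reference.
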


For nonempty subsets $K$, $D$ of $\R^m$, the extended Hausdorff distance between $K$ and $D$ is defined by the formula
\begin{equation*}
\mathscr{H}(K,D):= \max\big\{\,\sup_{\xi\in K}d(\xi,D),\;\sup_{\xi\in D}d(\xi,K)\,\big\}\in\R\cup\{+\infty\}.
\end{equation*}

By Theorem \ref{tw2_rlhmh} (ELC)  we obtain the following corollary.

\begin{Cor}\label{hlc-cor-ner}
Assume  \tn{(H1)-(H3)} and \tn{(HLC)}. Then the following inequality
\begin{equation*}
\mathscr{H}(E_{H^{\ast}}(t,x),E_{H^{\ast}}(t,y))\leq 2k_R(t)\,|x-y|
\end{equation*}
holds for any $t\in[0,T]$, $x,y\in \B_R$ and $R>0$.
\end{Cor}


\section{Graphical and epigraphical representations of the Hamiltonian}\label{dfrentger} 

\noindent In this section we show differences between graphical and epigraphical representations of the Hamiltonian whose conjugate is unbounded on the effective domain.

\vspace{2mm}
Let us define the Hamiltonian $\textit{I\!H}:\R\times\R\rightarrow\R$ by the formula
\begin{equation*}
\textit{I\!H}(x,p):=\left\{
\begin{array}{ccl}
(\sqrt{|xp|}-1)^2 & \tn{if} & |xp|\,> 1, \\[1mm]
0 & \tn{if} & |xp|\leq 1.
\end{array}
\right.
\end{equation*}
This Hamiltonian satisfies (H1)-(H4) and (HLC). Its conjugate  $\textit{I\!H}^{\ast}:\R\times\R\rightarrow\R\cup\{+\infty\}$ has the following form
\begin{equation*}
\textit{I\!H}^{\ast}(x,v)=\left\{
\begin{array}{ccl}
+\infty & \tn{if} & v\not\in(-|x|,|x|\,),\;x\not=0,\\[1mm]
\frac{\displaystyle |v|}{\displaystyle |x|-|v|} & \tn{if} & v\in(-|x|,|x|\,),\;x\not=0, \\[2mm]
0 & \tn{if} & v=0,\; x=0,\\[0mm]
+\infty & \tn{if} & v\not=0,\;x=0.
\end{array}
\right.
\end{equation*}
The set $\D\textit{I\!H}^{\ast}(x,\cdot)=(-|x|,|x|\,)$  is not closed and the function $v\rightarrow \textit{I\!H}^{\ast}(x,v)$ is not bounded on this set for every $x\in\R\setminus\{0\}$. Moreover, the function  $(x,v)\rightarrow \textit{I\!H}^{\ast}(x,v)$  is not continuous on the set $\D \textit{I\!H}^{\ast}$, because $\lim_{i\rightarrow\infty}\textit{I\!H}^{\ast}\left(2/i,1/i\right)=1\not= 0=\textit{I\!H}^{\ast}(0,0)$.

Since  $\textit{I\!H}$ satisfies (H1)-(H4) and (HLC) we can construct an epigraphical representation  $(\R\times\R,f,l)$ of $\textit{I\!H}$ like in Theorem \ref{th-rprez-glo12} below. However, the method of constructing a graphical representation given in  \cite{F-S,FR} cannot be applied to  $\textit{I\!H}$, since the parametrization theorem of set-valued maps involves closed-valued maps. However,  $x\to\D\textit{I\!H}^{\ast}(x,\cdot)$ is not a closed-valued map. Therefore we cannot utilize this approach to parametrize $x\to\D\textit{I\!H}^{\ast}(x,\cdot)$. Nevertheless,  to parametrize $x\to\D\textit{I\!H}^{\ast}(x,\cdot)$ we can use an epigraphical representation $(\R\times\R,f,l)$ of $\textit{I\!H}$ from Theorem \ref{th-rprez-glo12}. Then  $f(x,\R\times\R)=\D\textit{I\!H}^{\ast}(x,\cdot)$. Let
\begin{equation}\label{gerd1}
\textit{l\!l}(x,a_1,a_2)=\textit{I\!H}^{\ast}(x,f(x,a_1,a_2)).
\end{equation}
Of course, $(\R\times\R,f,\textit{l\!l})$ is a graphical representation of $\textit{I\!H}$. However, the function $\textit{l\!l}$ at the point $(0,0,r)$ is discontinuous for $r>0$. Indeed, let $a_1=r|x|/(1+r)$ and $a_2=r$ with $x\in\R$, $r>0$. We observe  that $(a_1,a_2)\in\E\textit{I\!H}^{\ast}(x,\cdot)$. By the extra-property (A3) of Theorem \ref{th-rprez-glo12},
\begin{equation}\label{gerd2}
f(x,a_1,a_2)=a_1\;\;\;\;\;\tn{and}\;\;\;\;\; l(x,a_1,a_2)=a_2.
\end{equation}
Let $a_{1i}=r|x_i|/(1+r)$ and $a_{2i}=r$ with $x_i=1/i$, $r>0$. Then $\lim_{i\to\infty}(x_i,a_{1i},a_{2i})=(0,0,r)$. In view of \eqref{gerd2}, we have $f(x_i,a_{1i},a_{2i})=a_{1i}$ and $f(0,0,r)=0$. By \eqref{gerd1} we get $\textit{l\!l}(x_i,a_{1i},a_{2i})=\textit{I\!H}^{\ast}(x,a_{1i})=r$ and $\textit{l\!l}(0,0,r)=\textit{I\!H}^{\ast}(0,0)=0$. Suppose that $\textit{l\!l}$ is continuous. Then we get $r=\lim_{i\to\infty}\textit{l\!l}(x_i,a_{1i},a_{2i})=\textit{l\!l}(0,0,r)=0$.  This contradicts the fact that $r>0$.

\vspace{2mm}
The lack of continuity of the function $\textit{l\!l}$ is not suprising, because
the function $\textit{l\!l}$ is a composition of discontinuous and continuous functions.
Such compositions usually are not continuous. However, it is not a rule. We observe that the function $\textit{f\!f}(x,a)=a|x|^2/(1+|a|\,|x|)$ is parametrization of $\D\textit{I\!H}^{\ast}(x,\cdot)$ such that $\textit{f\!f}(x,\R)=\D\textit{I\!H}^{\ast}(x,\cdot)$. Then, the function $\textit{l\!l}(x,a)=\textit{I\!H}^{\ast}(x,\textit{f\!f}(x,a))=|a|\,|x|$ is continuous. Therefore $(\R,\textit{f\!f},\textit{l\!l})$ is a continuous graphical representation of $\textit{I\!H}$. In general, it is difficult to indicate the class of Hamiltonians with  discontinuous conjugates for which continuous graphical representations with the unbounded control set exist.

\vspace{2mm}
The Hamiltonian $\textit{I\!H}$ does not have a graphical representation $(\mathds{A},\textit{f\!f},\textit{l\!l})$ such that
\begin{equation}\label{gerd3}
|\textit{l\!l}(x,a)-\textit{l\!l}(y,a)|\leq k_R\,|x-y|\;\;\; \tn{for all}\;\;\, x,y\in\B_R,\; a\in\mathds{A}. 
\end{equation}
In particular, the Hamiltonian $\textit{I\!H}$ does not have a graphical representation $(\mathds{A},\textit{f\!f},\textit{l\!l})$  which satisfies (A1) from  Theorem \ref{th-rprez-glo12}. Let us assume by contradiction that  the Hamiltonian $\textit{I\!H}$ has a graphical representation $(\mathds{A},\textit{f\!f},\textit{l\!l})$ satisfying \eqref{gerd3}. Let $x_i=R/i$ and $v_i=R/(2i)$ with $R>0$. Because of $v_i\in\D\textit{I\!H}^{\ast}(x_i,\cdot)=\textit{f\!f}(x_i,\mathds{A})$, there exists $a_i\in\mathds{A}$ such that $\textit{f\!f}(x_i,a_i)=v_i$. We observe that 
$\textit{l\!l}(x_i,a_i)=\textit{I\!H}^{\ast}(x_i,\textit{f\!f}(x_i,a_i))=\textit{I\!H}^{\ast}(x_i,v_i)=1$ and $\textit{l\!l}(0,a_i)=\textit{I\!H}^{\ast}(0,\textit{f\!f}(0,a_i))=\textit{I\!H}^{\ast}(0,0)=0$. In view of \eqref{gerd3} we have $1=|\textit{l\!l}(x_i,a_i)-\textit{l\!l}(0,a_i)|\leq k_R\,|x_i|=K_R\,R/i$. Passing to the limit as $i\to\infty$ we obtain $1\leq 0$, a contradiction.


\section{Main results}\label{section-3}
\noindent In this section we describe the main results of the paper that concern  faithful representations $(A,f,l)$ with the unbounded control set $A:=\R^{n}\times\R$. 

\pagebreak
\begin{Th}[\bf{Representation}]\label{th-rprez-glo12}
Assume \tn{(H1)-(H4)}, \tn{(HLC)}. Then there exist functions  $f:[0,T]\times\R^n\times\R^{n+1}\rightarrow\R^n$ and $l:[0,T]\times\R^n\times\R^{n+1}\rightarrow\R$, measurable in $t$ for all $(x,a)\in\R^n\times\R^{n+1}$ and continuous in $(x,a)$ for all $t\in[0,T]$, such that for every $t\in[0,T]$, $x,p\in\R^n$
\begin{equation*}
 H(t,x,p)=\sup_{\;\;a\,\in\,\R^{n+1}}\,\{\,\langle\, p,f(t,x,a)\,\rangle-l(t,x,a)\,\}
\end{equation*}
and $f(t,x,\R^{n+1})=\D H^{\ast}(t,x,\cdot)$. Moreover, we have the following.
\begin{enumerate}
\item[\tn{\bf{(A1)}}] For any $R>0$, $t\in[0,T]$, $x,y\in \B_R$, $a,b\in\R^{n+1}$\vspace{-1mm}
\begin{equation*}
\begin{array}{l}
|f(t,x,a)-f(t,y,b)|\leq 10\,(n+1)\,(\,k_R(t)\,|x-y|+|a-b|\,),\\[0.3mm]
|l(t,x,a)-l(t,y,b)|\leq 10\,(n+1)\,(\,k_R(t)\,|x-y|+|a-b|\,).
\end{array}
\end{equation*}
\item[\tn{\bf{(A2)}}] For any $t\in[0,T]$, $x\in\R^n$, $a\in\R^{n+1}$\vspace{-1mm}
\begin{equation*}
\hspace{-2.4mm}\begin{array}{l}
|f(t,x,a)|\leq c(t)(1+|x|)\;\;\it{and}\;\;-|H(t,x,0)|\leq l(t,x,a),\\[0.3mm]
l(t,x,a)\leq 2|H(t,x,0)|+2c(t)(1+|x|)+3|a|.
\end{array}
\end{equation*}
\item[\tn{\bf{(A3)}}]   $a=(f(t,x,a),l(t,x,a))$\, for all\,  $a\in\E H^{\ast}(t,x,\cdot)$, $t\in[0,T]$, $x\in\R^n$.\vspace{1mm}
\item[\tn{\bf{(A4)}}] Additionally, if $H$ is continuous, so are $f$ and $l$.
\end{enumerate}
\end{Th}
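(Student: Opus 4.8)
The idea is to reduce the statement to a parameterization theorem for the multifunction $E_{H^{\ast}}$ and then to produce that parameterization by an explicit retraction. For every fixed $(t,x)$ the function $H(t,x,\cdot)$ is finite, convex by \tn{(H3)} and continuous by \tn{(H2)}, hence proper, convex and lower semicontinuous; by Fenchel--Moreau (see \cite{R-W}) it equals its biconjugate, so that
\begin{equation*}
H(t,x,p)=\sup_{v\in\R^{n}}\{\langle p,v\rangle-H^{\ast}(t,x,v)\}=\sup_{(v,\eta)\in E_{H^{\ast}}(t,x)}\{\langle p,v\rangle-\eta\}\qquad(p\in\R^{n}),
\end{equation*}
the last equality because for $v\in\D H^{\ast}(t,x,\cdot)$ the supremum of $-\eta$ over $\eta\geq H^{\ast}(t,x,v)$ is $-H^{\ast}(t,x,v)$, while points with $v\notin\D H^{\ast}(t,x,\cdot)$ do not contribute. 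Consequently, as soon as we construct a map $\e=(f,l):[0,T]\times\R^{n}\times\R^{n+1}\to\R^{n}\times\R$, measurable in $t$, continuous in $(x,a)$, with $\e(t,x,\R^{n+1})=E_{H^{\ast}}(t,x)$, the required identity for $H$ follows, and $f(t,x,\R^{n+1})$ is the image of $E_{H^{\ast}}(t,x)$ under the projection onto the $v$-coordinates, i.e. $\D H^{\ast}(t,x,\cdot)$. Thus everything reduces to building such a parameterization of $E_{H^{\ast}}$ carrying the quantitative features \tn{(A1)--(A4)}.

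By Corollary~\ref{wrow-wm} the map $(t,x)\mapsto E_{H^{\ast}}(t,x)$ has nonempty, closed, convex values, is measurable in $t$, lower semicontinuous in $x$, and satisfies $\|\D H^{\ast}(t,x,\cdot)\|\leq c(t)(1+|x|)$; by Corollary~\ref{hlc-cor-ner} it is Hausdorff--Lipschitz in $x$ on each ball $\B_R$, with constant $2k_R(\cdot)$. I would build $\e(t,x,\cdot)$ as a \emph{retraction} of $\R^{n+1}$ onto the closed convex set $E_{H^{\ast}}(t,x)$ — a continuous map into $E_{H^{\ast}}(t,x)$ that restricts to the identity on $E_{H^{\ast}}(t,x)$. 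Then \tn{(A3)} is immediate; surjectivity onto $E_{H^{\ast}}(t,x)$ and the inclusion $\e(t,x,\R^{n+1})\subseteq E_{H^{\ast}}(t,x)$ hold by construction; and since $\e$ maps into $E_{H^{\ast}}(t,x)$, the bound $|f(t,x,a)|\leq c(t)(1+|x|)$ follows from \tn{(E5)} while $l(t,x,a)\geq-|H(t,x,0)|$ follows from $H^{\ast}(t,x,v)\geq\langle v,0\rangle-H(t,x,0)\geq-|H(t,x,0)|$. The naive choice of $\e(t,x,\cdot)$ as the metric projection onto $E_{H^{\ast}}(t,x)$ is \emph{not} good enough: metric projections onto a convex set moving Lipschitz-continuously in Hausdorff distance are only $\tfrac12$-H\"older in the moving set, with a modulus that deteriorates as $a$ recedes from the set, so \tn{(A1)} would fail. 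Instead I would exploit the epigraphical structure $E_{H^{\ast}}(t,x)=\{(v,\eta):\eta\geq H^{\ast}(t,x,v)\}$ with bounded $v$-domain: first move the $v$-component of $a$ to a point $\hat v$ of $\D H^{\ast}(t,x,\cdot)$ that is close to it and at which $H^{\ast}(t,x,\cdot)$ stays controlled (by $2|H(t,x,0)|+2c(t)(1+|x|)$, say) — a step that is Lipschitz in $a$ and, through the constant $c(t)(1+|x|)$ and Corollary~\ref{hlc-cor-ner}, Lipschitz in $x$ — and then take the $\eta$-component to be $\max$ of the $\eta$-component of $a$ and $H^{\ast}(t,x,\hat v)$, interpolating suitably across $\partial E_{H^{\ast}}(t,x)$ so as to keep continuity and the identity on $E_{H^{\ast}}(t,x)$. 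Carrying this out group of coordinates by group of coordinates accounts for the factor $n+1$ in \tn{(A1)}, and measurability in $t$ is preserved because each step is a canonical (measurable-selection) operation applied to the measurable multifunction $E_{H^{\ast}}(\cdot,x)$.

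Once $\e=(f,l)$ is in hand the verification is short. The identity for $H$ and $f(t,x,\R^{n+1})=\D H^{\ast}(t,x,\cdot)$ were explained above; \tn{(A1)} is the Lipschitz estimate built into the construction, with $k_R$ entering via Corollary~\ref{hlc-cor-ner}; the first line of \tn{(A2)} is \tn{(E5)} together with the lower bound $H^{\ast}(t,x,\cdot)\geq-|H(t,x,0)|$, and the upper bound on $l$ in \tn{(A2)} is read off from the explicit formulas, using that for $a\in E_{H^{\ast}}(t,x)$ one has $l(t,x,a)=\eta\leq|a|$ (if $\eta\geq0$) while for $a\notin E_{H^{\ast}}(t,x)$ one has $l(t,x,a)\leq\max\{|a|,\,2|H(t,x,0)|+2c(t)(1+|x|)\}$; \tn{(A3)} is the identity property of the retraction; and if $H$ is continuous, Corollary~\ref{wrow-wm}\tn{(E6)} shows $(t,x)\mapsto E_{H^{\ast}}(t,x)$ has closed graph and is lower semicontinuous, so the same construction delivers jointly continuous $f,l$, which is \tn{(A4)}.

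\textbf{Main obstacle.} The whole weight of the proof sits in the construction of the retraction $\e(t,x,\cdot)$ — concretely, in selecting, Lipschitz-continuously in $x$ and continuously in $a$, a point of $\D H^{\ast}(t,x,\cdot)$ near $a$ at which the conjugate is controlled, and in gluing this to the identity across $\partial E_{H^{\ast}}(t,x)$ without losing continuity or the Lipschitz constants. The effective domain $\D H^{\ast}(t,x,\cdot)$ need be neither closed nor full-dimensional, and $H^{\ast}(t,x,\cdot)$ may tend to $+\infty$ along its relative boundary, exactly as in the example of Section~\ref{dfrentger}; this is the phenomenon that obstructs the graphical constructions of \cite{F-S,FR} and forces the epigraphical viewpoint adopted here. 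Obtaining genuine Lipschitz (not merely H\"older) dependence on $x$ through this step, while keeping the identity on $E_{H^{\ast}}(t,x)$ and the quantitative bounds \tn{(A1)--(A2)}, is where the novelty of the argument lies.
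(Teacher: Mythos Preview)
Your reduction to parameterizing the epigraph multifunction $E_{H^{\ast}}$ is exactly right, as is your recognition that the metric projection is too weak (only H\"older in the moving set) and that one needs a retraction onto $E_{H^{\ast}}(t,x)$ with genuine Lipschitz dependence on $x$. Where the proposal has a real gap is the two--stage coordinate-wise construction. You ask for $\hat v\in\D H^{\ast}(t,x,\cdot)$ near the $v$-component of $a$ with $H^{\ast}(t,x,\hat v)$ bounded by $2|H(t,x,0)|+2c(t)(1+|x|)$, and then set $l=\max\{\eta,H^{\ast}(t,x,\hat v)\}$. But this is incompatible with \tn{(A3)}: if $a=(v,\eta)\in E_{H^{\ast}}(t,x)$ lies on the graph with $\eta=H^{\ast}(t,x,v)$ arbitrarily large (which is exactly what happens near the relative boundary of the domain in the example of Section~\ref{dfrentger}), then \tn{(A3)} forces $\hat v=v$, yet $H^{\ast}(t,x,\hat v)$ is not controlled by anything independent of $a$. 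So your rule for $\hat v$ must bifurcate --- identity on $E_{H^{\ast}}(t,x)$, pulling to a moderate level set outside --- and the gluing across $\partial E_{H^{\ast}}(t,x)$, Lipschitz in $x$, is precisely the step you leave open as the ``main obstacle.'' Since $\D H^{\ast}(t,x,\cdot)$ need not be closed and $H^{\ast}$ blows up at its boundary, a coordinate-wise recipe that first fixes $\hat v$ and only then fixes $\eta$ does not obviously close.

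The paper avoids this by never separating the $v$-- and $\eta$--coordinates. It works in $\R^{n+1}$ with the closed convex set $E_{H^{\ast}}(t,x)$ directly and sets
\[
\e(t,x,a)\;:=\;s_{n+1}\!\bigl(E_{H^{\ast}}(t,x)\cap\B(a,\,2d(a,E_{H^{\ast}}(t,x)))\bigr),
\]
the Steiner point of the compact convex truncation $\Phi(t,x,a)=E_{H^{\ast}}(t,x)\cap\B(a,2d(a,E_{H^{\ast}}(t,x)))$. Two off-the-shelf lemmas from \cite{A-F} do all the work: the map $(a,K)\mapsto K\cap\B(a,2d(a,K))$ is $5$-Lipschitz in Hausdorff distance (Lemma~\ref{lem-pmh}), and the Steiner selection is $(n+1)$-Lipschitz on $\mathcal{P}_{kc}(\R^{n+1})$ (Lemma~\ref{lem-scmh}). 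Chaining these with Corollary~\ref{hlc-cor-ner} gives \tn{(A1)} with constant $10(n+1)$ --- the factor $n+1$ is the Steiner constant in $\R^{n+1}$, not a coordinate-by-coordinate accumulation. The retraction property \tn{(A3)} is automatic because $a\in E_{H^{\ast}}(t,x)$ forces $d(a,E_{H^{\ast}}(t,x))=0$, hence $\Phi(t,x,a)=\{a\}$ and $s_{n+1}(\{a\})=a$. The upper bound in \tn{(A2)} comes from $|\e(t,x,a)|\leq 3|a|+2d(0,E_{H^{\ast}}(t,x))$ together with the fact that $E_{H^{\ast}}(t,x)$ contains a point $(\bar v,-H(t,x,0))$ with $|\bar v|\leq c(t)(1+|x|)$, obtained by choosing $\bar v$ to realize $H(t,x,0)=H^{\ast\ast}(t,x,0)=-H^{\ast}(t,x,\bar v)$. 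Measurability in $t$ and \tn{(A4)} are read off from the Steiner-point formula and the continuity/measurability of $\Phi$.
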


 Property (A1) means that $f,l$  are locally Lipschitz continuous in $x$ with Lipschitz constants dependent on time and globally Lipschitz continuous in $a$ with Lipschitz constant independent on time. Property (A2) implies that $f$ has sublinear growth in $x$ and $l$ has sublinear growth in $a$. Property (A3) is called the extra-property. Property (A4) means that $f,l$  are  continuous if only $H$ is continuous. The proof of Thm.  \ref{th-rprez-glo12} is given in Sect. \ref{thm-profsrst}.

\begin{Rem}\label{extraproperty}
We consider the representation $(\R^{n+1}\!\!\!,f,l)$ of $H$ defined as in  Theorem~\ref{th-rprez-glo12}. Then, in view of \cite[Lem. 4.1]{AM}, we have $\e(t,x,\R^{n+1})\subset\E H^{\ast}(t,x,\cdot)$, where $\e=(f,l)$. Moreover, by the extra-property we get that $\E H^{\ast}(t,x,\cdot)=\e(t,x,\E H^{\ast}(t,x,\cdot))$. Hence,
\begin{equation*}
\E H^{\ast}(t,x,\cdot)=\e(t,x,\E H^{\ast}(t,x,\cdot))\subset \e(t,x,\R^{n+1})\subset\E H^{\ast}(t,x,\cdot).
\end{equation*}
Therefore, the extra-property implies that  $(\R^{n+1}\!\!\!,f,l)$ is an epigraphical representation of $H$ because $\e(t,x,\R^{n+1})=\E H^{\ast}(t,x,\cdot)$.  It turns out, however, that the extra-property is a much stronger property  than the equality $\e(t,x,\R^{n+1})=\E H^{\ast}(t,x,\cdot)$. Indeed, we consider the  absolutely continuous function $(x,u)(\cdot)$ from $[0,T]$ into $\R^n\times\R$ such that
\begin{equation*}
(\dot{x},\dot{u})(t)\in\E H^{\ast}(t,x(t),\cdot)\;\;\tn{a.e.}\;\;t\in[0,T]. 
\end{equation*}
Then in view of  Filippov Theorem and the equality $\e(t,x,\R^{n+1})=\E H^{\ast}(t,x,\cdot)$  there exists a measurable control $\hat{a}(\cdot)$ defined on $[0,T]$ with values in $\R^{n+1}$ such that 
\begin{equation*}
(\dot{x},\dot{u})(t)=\e(t,x(t),\hat{a}(t))\;\; \tn{a.e.}\;\; t\in[0,T]. 
\end{equation*}
Obviously, the measurable control $\hat{a}(\cdot)$ may  be not integrable. Whereas the extra-property  with the control $\check{a}(\cdot):=(\dot{x}(\cdot),\dot{u}(\cdot))$ implies that
\begin{equation*}
(\dot{x},\dot{u})(t)=\e(t,x(t),\check{a}(t))\;\; \tn{a.e.}\;\; t\in[0,T]. 
\end{equation*}
Since $(x,u)(\cdot)$ is an absolutely continuous function,   $(\dot{x},\dot{u})(\cdot)$ is an integrable function. Therefore, the control $\check{a}(\cdot)$ is also integrable. Moreover, by Theorem~\ref{th-rprez-glo12} (A2) we have
\begin{equation}\label{ofl}
|l(t,x(t),a(t))|\leq 2\,\omega(t,x(t))+3\,|a(t)|\;\; \tn{for all}\;\; t\in[0,T],
\end{equation}
where $\omega(t,x):=|H(t,x,0)|+c(t)(1+|x|)$. We observe that if  $\omega(\cdot,x(\cdot))$ and $a(\cdot)$ are integrable functions, then the function $l(\cdot,x(\cdot),a(\cdot))$ is integrable. However, if the control $a(\cdot)$ is a measurable function, then the function $l(\cdot,x(\cdot),a(\cdot))$ may  be not integrable. 
\end{Rem}

\begin{Th}\label{thm-rep-stab2}
Let $H_i,H$, $i\in\N$, be continuous and satisfy \tn{(H1)-(H4)}, \tn{(HLC)}.  We consider the representations $(\R^{n+1}\!\!\!,f_i,l_i)$ and $(\R^{n+1}\!\!\!,f,l)$ of $H_i$ and $H$, respectively,  defined as in the proof of Theorem~\ref{th-rprez-glo12}. If $H_i$ converge uniformly on compacts to $H$, then $f_i$ converge to $f$ and $l_i$ converge to $l$ uniformly on compacts in $[0,T]\times\R^n\times\R^{n+1}$.
\end{Th}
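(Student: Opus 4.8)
The plan is to exploit the explicit recipe used in the proof of Theorem~\ref{th-rprez-glo12}: the pair $\e=(f,l)$ is built from the set-valued map $E_{H^{\ast}}$ by a concrete parameterization formula (projection onto the convex set $E_{H^{\ast}}(t,x)$ composed with an auxiliary map of the control $a\in\R^{n+1}$), and likewise $\e_i=(f_i,l_i)$ from $E_{H_i^{\ast}}$. Since this construction is pointwise-explicit in terms of the sets $E_{H^{\ast}}(t,x)$ and $E_{H_i^{\ast}}(t,x)$, it suffices to show two things: first, that $E_{H_i^{\ast}}$ converges to $E_{H^{\ast}}$ in an appropriate local-uniform Hausdorff sense on compact subsets of $[0,T]\times\R^n$; and second, that the parameterization map $K\mapsto \e_K(\cdot)$ assigning to a closed convex set $K\subset\R^{n+1}$ the associated representation functions is (locally uniformly) continuous with respect to the Hausdorff distance on $K$. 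Composing these two facts yields local uniform convergence of $\e_i$ to $\e$.

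The first step — convergence of the epigraphical maps — is where the analytic content lies, and I expect it to be the main obstacle. One wants: for every compact $Q\subset[0,T]\times\R^n$, $\sup_{(t,x)\in Q}\mathscr{H}(E_{H_i^{\ast}}(t,x),E_{H^{\ast}}(t,x))\to 0$. This should follow from uniform-on-compacts convergence $H_i\to H$ together with the uniform bounds (H4) and (HLC): the constants $c(\cdot)$, $k_R(\cdot)$ are common to all $H_i$ and $H$, so by (E5) the effective domains $\D H_i^{\ast}(t,x,\cdot)$ and $\D H^{\ast}(t,x,\cdot)$ all lie in a common ball $\B_{c(t)(1+|x|)}$, and by (C5) the epigraphs are confined to a fixed bounded region over $Q$. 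Convexity plus the uniform Lipschitz/equicontinuity estimates in the gradient variable then let one pass from $H_i\to H$ locally uniformly to $H_i^{\ast}\to H^{\ast}$ locally uniformly on the (common) effective-domain region, and hence to Hausdorff convergence of the epigraphs $E_{H_i^{\ast}}(t,x)$ to $E_{H^{\ast}}(t,x)$, uniformly for $(t,x)\in Q$. Care is needed because $H^{\ast}$ is only lower semicontinuous and its effective domain need not be closed; here one uses (C5)–(C6) (valid since all the $H_i$, $H$ are continuous) to control the behaviour near the boundary of the effective domain, and the uniform coercivity coming from (H4) to prevent mass escaping to infinity.

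The second step is essentially a Lipschitz-type estimate for the parameterization. In the construction behind Theorem~\ref{th-rprez-glo12} the value $\e(t,x,a)$ depends on $(t,x)$ only through the set $E_{H^{\ast}}(t,x)$, via operations (metric projection onto a closed convex set, Minkowski-type combinations, and the fixed control-dependent selection) that are all Lipschitz in the Hausdorff distance on bounded pieces of the sets involved. Thus there is a modulus, depending only on $R$ and on bounds for $a$, such that
\[
|\e_i(t,x,a)-\e(t,x,a)|\;\le\;\omega_R\big(\mathscr{H}(E_{H_i^{\ast}}(t,x),E_{H^{\ast}}(t,x)),\,|a|\big),
\]
with $\omega_R(s,\rho)\to 0$ as $s\to 0$ for each fixed $\rho$. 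Combining this with Step~1 gives, for every compact $Q\times\B_\rho^{\,n+1}$, that $\sup |\e_i-\e|\to 0$, which is exactly uniform convergence on compacts of $f_i\to f$ and $l_i\to l$. The only genuinely delicate point is making sure the parameterization formula is robust enough that this Lipschitz dependence is uniform in $(t,x)\in Q$ and in $a$ on bounded sets — but this is guaranteed by the uniform constants in (A1)–(A2), which hold simultaneously for all the $H_i$ and for $H$ since they share the same $c(\cdot)$ and $k_R(\cdot)$.
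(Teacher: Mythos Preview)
Your Step~2 is on solid ground: the construction behind Theorem~\ref{th-rprez-glo12} is $\e(t,x,a)=s_{n+1}\bigl(E_{H^\ast}(t,x)\cap\B(a,2d(a,E_{H^\ast}(t,x)))\bigr)$, which depends on $(t,x)$ only through the set $E_{H^\ast}(t,x)$, and Lemmas~\ref{lem-pmh} and~\ref{lem-scmh} give exactly
\[
|\e_i(t,x,a)-\e(t,x,a)|\;\le\;5(n+1)\,\mathscr{H}\bigl(E_{H_i^\ast}(t,x),\,E_{H^\ast}(t,x)\bigr).
\]
So if Step~1 held, you would be done.

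The gap is Step~1. First, epigraphs are unbounded in the $\eta$-direction, so ``the epigraphs are confined to a fixed bounded region'' is simply false; only the projection $\D H^\ast(t,x,\cdot)$ is bounded by (C5). More importantly, uniform convergence of $H_i$ to $H$ on compacts does \emph{not} imply Hausdorff convergence of $E_{H_i^\ast}(t,x)$ to $E_{H^\ast}(t,x)$. Take the one-dimensional example (no $(t,x)$-dependence, so (HLC) holds with $k_R\equiv 0$): $H(p)=|p|$, and $H_i(p)=|p|$ for $|p|\le i$, $H_i(p)=2|p|-i$ for $|p|>i$. These are convex, all satisfy (H4) with the common constant $c\equiv 2$, and $H_i=H$ on $[-i,i]$, hence $H_i\to H$ uniformly on compacts. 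One computes $H^\ast(v)=0$ on $[-1,1]$ and $+\infty$ outside, while $H_i^\ast(v)=0$ on $[-1,1]$, $(|v|-1)\,i$ on $1<|v|\le 2$, and $+\infty$ outside. The point $(2,i)$ lies in $E_{H_i^\ast}$ and has distance $1$ from $E_{H^\ast}=[-1,1]\times[0,\infty)$, so $\mathscr{H}(E_{H_i^\ast},E_{H^\ast})\ge 1$ for every $i$. Your Step~1 fails here, yet Theorem~\ref{thm-rep-stab2} still applies.

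The paper avoids this by never demanding Hausdorff convergence of the unbounded epigraphs. It invokes Wijsman's theorem (Lemma~\ref{klconv}) to get only \emph{Kuratowski} convergence $E_{H_i^\ast}(t_i,x_i)\to E_{H^\ast}(t,x)$ along sequences $(t_i,x_i)\to(t,x)$, and then shows (Theorem~\ref{thm-zjnzz}, using the intersection result Lemma~\ref{thmozkp} and a direct argument when the ball degenerates to a point) that the \emph{compact} sets $\Phi_i(t_i,x_i,a_i)=E_{H_i^\ast}(t_i,x_i)\cap\B\bigl(a_i,2d(a_i,E_{H_i^\ast}(t_i,x_i))\bigr)$ converge in Hausdorff distance. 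The Steiner selection then gives continuous convergence $\e_i(t_i,x_i,a_i)\to\e(t,x,a)$, which, since $\e$ is continuous, is equivalent to uniform convergence on compacts. The intersection with the data-dependent ball is precisely what neutralizes the unboundedness that breaks your argument.

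A secondary issue: Theorem~\ref{thm-rep-stab2} does not assume that the constants $c(\cdot)$, $k_R(\cdot)$ are common to all $H_i$, so you cannot appeal to uniform (A1)--(A2) bounds across $i$ as you do at the end. The paper's argument does not need any such uniformity.
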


\begin{Th}\label{thm-rep-stab4}
Let $H_i,H$, $i\in\N$, satisfy \tn{(H1)-(H4)}, \tn{(HLC)}.  We consider the representations $(\R^{n+1}\!\!\!,f_i,l_i)$ and $(\R^{n+1}\!\!\!,f,l)$ of $H_i$ and $H$, respectively, defined as in the proof of Theorem~\ref{th-rprez-glo12}. If $H_i(t,\cdot,\cdot)$ converge uniformly on compacts to $H(t,\cdot,\cdot)$ for every $t\in[0,T]$, then $f_i(t,\cdot,\cdot)$ converge to $f(t,\cdot,\cdot)$ and $l_i(t,\cdot,\cdot)$ converge to $l(t,\cdot,\cdot)$ uniformly on compacts in $\R^n\times\R^{n+1}$ for every $t\in[0,T]$.
\end{Th}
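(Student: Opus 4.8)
The plan is to treat Theorem~\ref{thm-rep-stab4} as the ``$t$ fixed'' counterpart of Theorem~\ref{thm-rep-stab2}. First observe that the construction in the proof of Theorem~\ref{th-rprez-glo12} produces $f(t,\cdot,\cdot),l(t,\cdot,\cdot)$ from the \emph{single} convex set-valued datum $x\mapsto E_{H^{\ast}}(t,x)$ and refers to no other value of the time variable. Indeed, the sup-formula forces $l(t,x,a)\geq H^{\ast}(t,x,f(t,x,a))$ for every $a\in\R^{n+1}$, while $f(t,x,\R^{n+1})=\D H^{\ast}(t,x,\cdot)$ together with (A3) give $(f(t,x,a),l(t,x,a))\in E_{H^{\ast}}(t,x)$ for every $a$, with equality on $\E H^{\ast}(t,x,\cdot)=E_{H^{\ast}}(t,x)$; hence $a\mapsto(f(t,x,a),l(t,x,a))$ is a retraction of $\R^{n+1}$ onto $E_{H^{\ast}}(t,x)$, Lipschitz in $a$ with constant $10(n+1)$ and in $x$ with constant $10(n+1)\,k_R(t)$ by (A1). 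The same description holds for $(f_i,l_i)$ with $E_{H^{\ast}}$ replaced by $E_{H_i^{\ast}}$. Consequently one can run the argument from the proof of Theorem~\ref{thm-rep-stab2} with $t\in[0,T]$ held fixed, using the $t$-sectional properties (E1)--(E4) of Corollary~\ref{wrow-wm} in place of the joint property (E6) (the only point at which continuity of $H$ was used there); that argument nowhere invokes continuity of the data in $t$, and its sole genuine input is the convergence of the retracted sets $E_{H_i^{\ast}}(t,\cdot)\to E_{H^{\ast}}(t,\cdot)$.

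Thus the theorem reduces to the claim that, for every fixed $t\in[0,T]$ and all $R,\rho>0$,
\begin{equation*}
\lim_{i\to\infty}\ \sup_{x\in\B_R}\ \sup_{|a|\leq\rho}\ |\,d(a,E_{H_i^{\ast}}(t,x))-d(a,E_{H^{\ast}}(t,x))\,|=0,
\end{equation*}
that is, $E_{H_i^{\ast}}(t,\cdot)\to E_{H^{\ast}}(t,\cdot)$ in the bounded-Hausdorff sense, uniformly for $x$ in bounded sets. To prove this, fix $t$ and $R$. The functions $H_i(t,x,\cdot),H(t,x,\cdot)$ are finite and convex, $H_i(t,\cdot,\cdot)$ converges to $H(t,\cdot,\cdot)$ uniformly on $\B_R\times\B_{\rho'}$ for every $\rho'>0$, and the standard Lipschitz estimate for convex functions then makes the $H_i(t,x,\cdot)$ eventually equi-Lipschitz on every ball, uniformly for $x\in\B_R$; as in the proof of Theorem~\ref{thm-rep-stab2}, the constants $k_{R,i}(t)$ from (HLC) and $c_i(t)$ from (H4) may also be taken eventually uniformly bounded in $i$. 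Now, for finite convex functions, uniform-on-compacts convergence is equivalent to bounded-Hausdorff convergence of the epigraphs, and Legendre--Fenchel conjugation preserves bounded-Hausdorff convergence of proper lower semicontinuous convex functions (Attouch-type stability; see \cite[Chaps.~7, 11]{R-W}). Applying this with the parameter $x$, and using that the convergence $H_i\to H$ together with all of the above bounds is uniform over $x\in\B_R$, yields the displayed limit.

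Finally, insert this claim into the frozen-$t$ form of the argument of Theorem~\ref{thm-rep-stab2}: the retractions $(f_i(t,\cdot,\cdot),l_i(t,\cdot,\cdot))$ and $(f(t,\cdot,\cdot),l(t,\cdot,\cdot))$ depend on $E_{H_i^{\ast}}(t,\cdot)$ and $E_{H^{\ast}}(t,\cdot)$ through a construction that is continuous for bounded-Hausdorff convergence, with a modulus that is uniform on every $\B_R\times\{\,|a|\leq\rho\,\}$ thanks to the uniform Lipschitz bounds (A1) and the growth bounds (A2); hence $f_i(t,\cdot,\cdot)\to f(t,\cdot,\cdot)$ and $l_i(t,\cdot,\cdot)\to l(t,\cdot,\cdot)$ uniformly on compacts of $\R^n\times\R^{n+1}$, which is the assertion.

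I expect the crux to be the claim above --- specifically, the passage from uniform-on-compacts convergence of the Hamiltonians to bounded-Hausdorff convergence of the conjugate epigraphs $E_{H_i^{\ast}}(t,x)$ \emph{uniformly} in $x\in\B_R$. Care is needed because the effective domains $\D H^{\ast}(t,x,\cdot)$ need be neither closed nor stable: a steep tail of $H_i(t,x,\cdot)$ outside the current ball may enlarge $\D H_i^{\ast}(t,x,\cdot)$ relative to $\D H^{\ast}(t,x,\cdot)$ without spoiling uniform convergence, so that only the \emph{bounded} parts of the epigraphs converge --- which is precisely why the bounded-Hausdorff formulation is the correct one; because the epigraph direction is unbounded one must control the vertical extent of $E_{H_i^{\ast}}(t,x)$ via (C5)/(E5) and (A2); and the uniformity in $x$ must be extracted with care near points $v$ where $H^{\ast}(t,x,v)=+\infty$. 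Once the claim is established, the remainder is a verbatim re-run of the proof of Theorem~\ref{thm-rep-stab2} with $t$ held fixed.
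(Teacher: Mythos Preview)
Your high-level plan --- freeze $t$ and rerun the proof of Theorem~\ref{thm-rep-stab2} --- is exactly what the paper does; its proof of Theorem~\ref{thm-rep-stab4} is the single sentence ``it is enough to fix $t\in[0,T]$'', pointing back to Theorem~\ref{thm-zjnzz}. So the strategy is right.

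Where you diverge is in the execution of the key convergence step. The paper never establishes your displayed uniform-in-$x$ bounded-Hausdorff claim, nor does it need any uniform-in-$i$ control on the constants $k_{R,i}(t)$ or $c_i(t)$. Instead, Theorem~\ref{thm-zjnzz} proves \emph{continuous convergence}: for every sequence $(x_i,a_i)\to(x_0,a_0)$ one has $\e_i(t,x_i,a_i)\to\e_0(t,x_0,a_0)$, and this is equivalent to uniform convergence on compacts. The only input is the $t$-frozen analogue of Lemma~\ref{klconv} (Wijsman): $H_i(t,x_i,\cdot)\to H(t,x_0,\cdot)$ uniformly on compacts implies $E_{H_i^\ast}(t,x_i)\to E_{H^\ast}(t,x_0)$ in the Kuratowski sense. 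From there the Steiner-point Lipschitz estimate (Lemma~\ref{lem-scmh}) and the intersection lemma (Lemma~\ref{thmozkp}) give $\mathscr{H}(\Phi_i(t,x_i,a_i),\Phi_0(t,x_0,a_0))\to 0$ exactly as in the proof of Theorem~\ref{thm-zjnzz}, with no uniform Lipschitz bounds required.

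Your assertion that ``the constants $k_{R,i}(t)$ from (HLC) and $c_i(t)$ from (H4) may also be taken eventually uniformly bounded in $i$'' is not in the hypotheses and does not follow from uniform-on-compacts convergence of the $H_i$, and it is also not in the proof of Theorem~\ref{thm-rep-stab2} that you cite. Your uniform claim is in fact true, but its correct proof is a subsequence/compactness argument that reduces to the sequential Kuratowski convergence above --- at which point you are back to the paper's route anyway. Drop the uniform-Lipschitz detour and run the Steiner-point argument directly along sequences.
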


\subsection{Correlation between variational and optimal control problems}\label{rvptoocp} 
In this subsection we consider a special kind of variational and optimal control problems  describing solutions of the  Hamilton-Jacobi equation with Hamiltonian which satisfies (H1)-(H4) and (HLC). 
 These problems are theoretical in nature. Nevertheless, they can be useful in investigating practical problems. For instance,  using these  variational and optimal control problems we prove stability of value functions and local Lipschitz continuity of the value function. We consider the following variational problem
\begin{equation}\label{problemcwp}
\begin{aligned}
\mathrm{minimize}&\;\;\;\Gamma[x(\cdot)]:=\phi(x(t_0),x(T))+\int_{t_0}^TH^{\ast}(t,x(t),\dot{x}(t))\,dt,\\[-1mm]
\mathrm{subject\;\, to}&\;\;\;x(\cdot)\in \mathcal{A}([t_0,T],\R^n),
\end{aligned}\tag{$\mathcal{P}_{v}$}
\end{equation}
and the following  optimal control problem 
\begin{equation}\label{problemcop}
\begin{aligned}
\mathrm{minimize}&\;\;\;\Lambda[(x,a)(\cdot)]:=\phi(x(t_0),x(T))+\int_{t_0}^Tl(t,x(t),a(t))\,dt,\\[-0.5mm]
\mathrm{subject\;\, to}&\;\;\;\dot{x}(t)=f(t,x(t),a(t))\;\;\mathrm{a.e.}
\;\,t\in[t_0,T],\\
\mathrm{and}&\;\;\; x(\cdot)\in \mathcal{A}([t_0,T],\R^n),\;a(\cdot)\in L^1([t_0,T],\R^{n+1}).
\end{aligned}\tag{$\mathcal{P}_{c}$}
\end{equation}

\begin{Th}\label{thm-reduct}
Assume that \tn{(H1)-(H4)} and \tn{(HLC)} hold with integrable functions $c(\cdot)$, $k_R(\cdot)$, $H(\cdot,0,0)$.  We consider the representation $(\R^{n+1}\!\!\!,f,l)$ of $H$ defined as in Theorem~\ref{th-rprez-glo12}. Assume further that $\phi$ is a proper, lower semicontinuous function  and  there exists $M\geq 0$ such that $\min\{\,|z|,|x|\,\}\leq M$ for all $(z,x)\in\D\phi$. Then
\begin{equation}
\min\Gamma[x(\cdot)]\,=\,\min\Lambda[(x,a)(\cdot)].
\end{equation}
Besides, if $\kr{x}(\cdot)$ is the optimal arc of \eqref{problemcwp} such that $\kr{x}(\cdot)\in\D\Gamma$, then $(\kr{x},\kr{a})(\cdot)$ is the optimal arc of \eqref{problemcop} with $\kr{a}(\cdot)=(\dot{\kr{x}}(\cdot),H^{\ast}(\cdot,\kr{x}(\cdot),\dot{\kr{x}}(\cdot)))$ such that $(\kr{x},\kr{a})(\cdot)\in\D\Lambda$.  Conversely, if $(\kr{x},\kr{a})(\cdot)$ is the optimal arc of \eqref{problemcop}, then $\kr{x}(\cdot)$ is the optimal arc of \eqref{problemcwp}.
\end{Th}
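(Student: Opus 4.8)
The plan rests on two consequences of Theorem~\ref{th-rprez-glo12}. First, from the supremum formula $H(t,x,p)=\sup_{a\in\R^{n+1}}\{\langle p,f(t,x,a)\rangle-l(t,x,a)\}$ and convexity of $H(t,x,\cdot)$, the Fenchel--Young inequality gives, for all $t,x$ and all $a\in\R^{n+1}$,
\[
H^{\ast}(t,x,f(t,x,a))\,\leq\, l(t,x,a),
\]
so in particular $f(t,x,a)\in\D H^{\ast}(t,x,\cdot)$. Second, the extra-property \tn{(A3)} says that $a\mapsto(f(t,x,a),l(t,x,a))$ is the identity on $\E H^{\ast}(t,x,\cdot)$. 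I would prove $\inf\Gamma=\inf\Lambda$ by two inequalities, read off the statements about optimal arcs along the way, and finally establish attainment of both infima by the direct method applied to \eqref{problemcwp}.

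\emph{The inequality $\inf\Gamma\leq\inf\Lambda$ (and the ``conversely'' part).} Let $(x,a)(\cdot)$ be admissible for \eqref{problemcop}. Then $x(\cdot)\in\mathcal{A}([t_0,T],\R^n)$ is admissible for \eqref{problemcwp}, and since $\dot{x}(t)=f(t,x(t),a(t))$ a.e., the displayed inequality gives $H^{\ast}(t,x(t),\dot{x}(t))\leq l(t,x(t),a(t))$ a.e.; the left side is measurable by \tn{(C1)} and minorized by $-|H(t,x(t),0)|$, which is integrable because \tn{(HLC)} and continuity of $x(\cdot)$ yield $|H(t,x(t),0)|\leq|H(t,0,0)|+k_R(t)\,\|x\|_{\infty}$ for $R$ large. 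Integrating and adding the common endpoint term $\phi(x(t_0),x(T))$ gives $\Gamma[x(\cdot)]\leq\Lambda[(x,a)(\cdot)]$ (trivially if $\Lambda=+\infty$), whence $\inf\Gamma\leq\inf\Lambda$. Once equality of the values and attainment are known, any optimal $(\kr{x},\kr{a})(\cdot)$ of \eqref{problemcop} then satisfies $\Gamma[\kr{x}]\leq\Lambda[(\kr{x},\kr{a})]=\min\Lambda=\min\Gamma$, forcing $\Gamma[\kr{x}(\cdot)]=\min\Gamma$.

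\emph{The inequality $\inf\Lambda\leq\inf\Gamma$ (and the ``besides'' part).} Let $x(\cdot)\in\D\Gamma$ be admissible for \eqref{problemcwp}. Since $\Gamma[x(\cdot)]<+\infty$ and $\phi$ is proper, $\phi(x(t_0),x(T))\in\R$ and $\int_{t_0}^TH^{\ast}(t,x(t),\dot{x}(t))\,dt<+\infty$; being also minorized by the integrable function $-|H(t,x(t),0)|$, this integrand lies in $L^1$. In particular $H^{\ast}(t,x(t),\dot{x}(t))<+\infty$ a.e., so $(\dot{x}(t),H^{\ast}(t,x(t),\dot{x}(t)))\in\G H^{\ast}(t,x(t),\cdot)\subset\E H^{\ast}(t,x(t),\cdot)$, and \tn{(A3)} applied to $\kr{a}(t):=(\dot{x}(t),H^{\ast}(t,x(t),\dot{x}(t)))$ gives $f(t,x(t),\kr{a}(t))=\dot{x}(t)$ and $l(t,x(t),\kr{a}(t))=H^{\ast}(t,x(t),\dot{x}(t))$ a.e. The map $\kr{a}(\cdot)$ is measurable (composition of $t\mapsto(t,x(t),\dot{x}(t))$ with the Lebesgue--Borel--Borel measurable $H^{\ast}$) and lies in $L^1([t_0,T],\R^{n+1})$, its first component being $\dot{x}(\cdot)\in L^1$ and its second the integrable function $H^{\ast}(\cdot,x(\cdot),\dot{x}(\cdot))$. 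Hence $(x,\kr{a})(\cdot)$ is admissible for \eqref{problemcop} with $\Lambda[(x,\kr{a})(\cdot)]=\Gamma[x(\cdot)]$, so $\inf\Lambda\leq\inf\Gamma$; applying this to an optimal $\kr{x}(\cdot)\in\D\Gamma$ of \eqref{problemcwp} produces the asserted optimal arc $(\kr{x},\kr{a})(\cdot)$ of \eqref{problemcop} with $(\kr{x},\kr{a})(\cdot)\in\D\Lambda$.

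\emph{Attainment and the main obstacle.} If $\D\Gamma=\emptyset$ both values are $+\infty$ by the first step, so assume $\D\Gamma\neq\emptyset$ and take a minimizing sequence $x_j(\cdot)$ for \eqref{problemcwp}. By \tn{(C5)} (equivalently \tn{(E5)}) one has $|\dot{x}_j(t)|\leq c(t)(1+|x_j(t)|)$ a.e., and since $(x_j(t_0),x_j(T))\in\D\phi$ forces $\min\{|x_j(t_0)|,|x_j(T)|\}\leq M$, Gronwall's lemma applied from whichever endpoint is controlled yields a uniform bound $\|x_j\|_{\infty}\leq C$. Then $|\dot{x}_j|\leq(1+C)\,c(\cdot)\in L^1$, so $\{\dot{x}_j\}$ is equi-integrable; by Dunford--Pettis and Arzel\`a--Ascoli, along a subsequence $x_j\to x$ uniformly with $x(\cdot)$ absolutely continuous and $\dot{x}_j\rightharpoonup\dot{x}$ in $L^1$. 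Since $H^{\ast}(t,\cdot,\cdot)$ is lower semicontinuous by \tn{(C2)}, convex in the last variable by \tn{(C3)}, and uniformly minorized on $\{\|y\|_{\infty}\leq C\}$ by an integrable function, the classical weak lower semicontinuity theorem for integral functionals, together with lower semicontinuity of $\phi$, gives $\Gamma[x(\cdot)]\leq\liminf_j\Gamma[x_j(\cdot)]=\inf\Gamma$, so $x(\cdot)$ is optimal; the construction of the previous paragraph then turns it into an optimal arc of \eqref{problemcop} with the same value, proving $\min\Gamma=\min\Lambda$. The genuine obstacles are exactly these last two points: extracting the a priori uniform bound on $\|x_j\|_{\infty}$ (where the growth condition \tn{(C5)} and the hypothesis on $\D\phi$ must be combined, choosing the time direction of Gronwall according to which endpoint is controlled) and the weak-$L^1$ lower semicontinuity of $x(\cdot)\mapsto\int_{t_0}^TH^{\ast}(t,x(t),\dot{x}(t))\,dt$; everything else follows formally from \tn{(A3)} and the supremum formula.
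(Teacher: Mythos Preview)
Your proof is correct and follows essentially the same approach as the paper: the two inequalities $\inf\Gamma\leq\inf\Lambda$ (from $H^{\ast}(t,x,f(t,x,a))\leq l(t,x,a)$, the paper's Lemma~\ref{lem-tran-r1}) and $\inf\Lambda\leq\inf\Gamma$ (from the extra-property (A3) applied to $\bar a=(\dot x,H^{\ast}(\cdot,x,\dot x))$) are exactly the content of Theorem~\ref{roz6-thm1} and Remark~\ref{roz6-rem1}. The only difference is that for attainment of the minimum in \eqref{problemcwp} the paper simply cites an external existence result (Theorem~\ref{roz6-thm2}, i.e.\ \cite[Thm.~7.6]{AM}), whereas you supply a correct direct-method sketch (Gronwall from whichever endpoint lies in $\B_M$, Dunford--Pettis compactness, weak lower semicontinuity of the convex integral functional).
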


The indicator function $\psi_K(\cdot)$  of the set $K$ has value $0$ on this set and $+\infty$ outside. \linebreak Applying Theorem \ref{thm-reduct} to $\phi(z,x):=\psi_{\{x_0\}}(z)+g(x)$, we obtain the following corollary.

\pagebreak
\begin{Cor}\label{cor-reduct}
Assume that \tn{(H1)-(H4)} and \tn{(HLC)} hold with integrable functions $c(\cdot)$, $k_R(\cdot)$, $H(\cdot,0,0)$.  We consider the representation $(\R^{n+1}\!\!\!,f,l)$ of $H$ defined as in  Theorem~\ref{th-rprez-glo12}. Let $g$ be a proper, lower semicontinuous function. Then for all $(t_0,x_0)\in[0,T]\times\R^n$ 
\begin{eqnarray*}
V(t_0,x_0) &=& \min_{\begin{array}{c}
\scriptstyle x(\cdot)\,\in\,\mathcal{A}([t_0,T],\R^n)\\[-1mm]
\scriptstyle x(t_0)=x_0
\end{array}}\!\!\big\{\,g(x(T))+\int_{t_0}^TH^{\ast}(t,x(t),\dot{x}(t))\,dt\,\big\}\\
&=& \min_{(x,a)(\cdot)\,\in\, \emph{S}_f(t_0,x_0)}\,\big\{\,g(x(T))+\int_{t_0}^Tl(t,x(t),a(t))\,dt\,\big\}.
\end{eqnarray*}
\end{Cor}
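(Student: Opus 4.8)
\textbf{Proof proposal for Corollary \ref{cor-reduct}.}

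The plan is to obtain the corollary as a direct specialization of Theorem~\ref{thm-reduct}, with the one extra remark that the infima are genuinely attained so that the ``$\min$'' notation is justified. First I would take $\phi(z,x):=\psi_{\{x_0\}}(z)+g(x)$. With this choice, $\phi$ is proper and lower semicontinuous because $g$ is proper and lower semicontinuous and $\psi_{\{x_0\}}$ is lower semicontinuous with a single point in its effective domain; moreover $\D\phi=\{x_0\}\times\D g$, so $\min\{|z|,|x|\}\le|x_0|=:M$ for all $(z,x)\in\D\phi$, which verifies the boundedness hypothesis of Theorem~\ref{thm-reduct}. Hence all the assumptions of Theorem~\ref{thm-reduct} are met (the hypotheses on $H$, namely (H1)-(H4), (HLC) with integrable $c(\cdot)$, $k_R(\cdot)$, $H(\cdot,0,0)$, are carried over verbatim).

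Next I would unwind the functionals $\Gamma$ and $\Lambda$ under this $\phi$. For any $x(\cdot)\in\mathcal{A}([t_0,T],\R^n)$ the term $\phi(x(t_0),x(T))=\psi_{\{x_0\}}(x(t_0))+g(x(T))$ equals $g(x(T))$ if $x(t_0)=x_0$ and is $+\infty$ otherwise; therefore minimizing $\Gamma[x(\cdot)]$ over all arcs is the same as minimizing $g(x(T))+\int_{t_0}^T H^{\ast}(t,x(t),\dot{x}(t))\,dt$ over arcs with $x(t_0)=x_0$, which is precisely the first expression for $V(t_0,x_0)$. Likewise, minimizing $\Lambda[(x,a)(\cdot)]$ subject to the control dynamics $\dot x=f(t,x,a)$, $a(\cdot)\in L^1$, and $x(\cdot)\in\mathcal{A}([t_0,T],\R^n)$ with the finite-value constraint on $\phi$ forcing $x(t_0)=x_0$, is the same as minimizing $g(x(T))+\int_{t_0}^T l(t,x(t),a(t))\,dt$ over $(x,a)(\cdot)\in S_f(t_0,x_0)$ — here I use that, by the growth and Lipschitz properties (A1)-(A2) of the representation, each $L^1$ control $a(\cdot)$ produces a unique absolutely continuous trajectory on $[t_0,T]$, so the pair indeed lies in $S_f(t_0,x_0)$. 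Theorem~\ref{thm-reduct} then gives $\min\Gamma=\min\Lambda$, i.e. the two displayed expressions for $V(t_0,x_0)$ coincide; and the ``min'' is legitimate because the optimal-arc assertions in Theorem~\ref{thm-reduct} guarantee attainment whenever the common value is finite, while if the value is $+\infty$ the minimum over an empty effective domain is trivially $+\infty$ and attained in the conventional sense.

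The only genuinely delicate point — and the step I expect to be the main obstacle — is confirming that the constraint bookkeeping matches exactly: that the set $\D\Gamma$ (arcs with $x(t_0)=x_0$ and the integral finite) is nonempty precisely when $V(t_0,x_0)<+\infty$, and that the passage between ``$(x,a)(\cdot)\in S_f(t_0,x_0)$ with $\Lambda$ finite'' in Theorem~\ref{thm-reduct} and ``$(x,a)(\cdot)\in S_f(t_0,x_0)$'' in the corollary introduces no spurious discrepancy. This is handled by the extra-property (A3): along any admissible pair one may, conversely, reconstruct from an optimal arc $\kr x(\cdot)$ of the variational problem the control $\kr a(\cdot)=(\dot{\kr x}(\cdot),H^{\ast}(\cdot,\kr x(\cdot),\dot{\kr x}(\cdot)))$, which by (A3) satisfies $f(t,\kr x,\kr a)=\dot{\kr x}$ and $l(t,\kr x,\kr a)=H^{\ast}(t,\kr x,\dot{\kr x})$, so $\kr a(\cdot)$ is integrable exactly because $H^{\ast}(\cdot,\kr x(\cdot),\dot{\kr x}(\cdot))$ is integrable on the optimal arc — precisely the content already recorded in Theorem~\ref{thm-reduct}. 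Thus no new work beyond invoking that theorem and carrying out the substitution is required.
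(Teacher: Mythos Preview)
Your proposal is correct and follows exactly the approach the paper uses: the paper derives the corollary in a single line by applying Theorem~\ref{thm-reduct} to $\phi(z,x):=\psi_{\{x_0\}}(z)+g(x)$. Your additional verification that this $\phi$ satisfies the hypotheses of Theorem~\ref{thm-reduct} and your unwinding of $\Gamma$ and $\Lambda$ are the natural fleshing-out of that one line, and the attainment you worry about is already built into the ``$\min$'' conclusion of Theorem~\ref{thm-reduct}.
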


\begin{Rem}\label{cor-bvocp-rem}Observe that the considered optimal control problem \eqref{problemcop} has integrable controls. Investigating integrable controls is possible due to argumentation contained 
in Remark \ref{extraproperty}; see Subsect. \ref{thm-reduct-sect}. In addition, correlation between the optimal control $\kr{a}(\cdot)$ and the optimal trajectory  $\kr{x}(\cdot)$ 
can be expressed by the simple formula  $(\dot{\kr{x}}(\cdot),H^{\ast}(\cdot,\kr{x}(\cdot),\dot{\kr{x}}(\cdot)))=\kr{a}(\cdot)$. Of course, this formula does not make sense in the case of the optimal control problem with the compact control set considered in paper \cite{AM}. Indeed, on the one hand $\dot{\kr{x}}(\cdot)$ and $H^{\ast}(\cdot,\kr{x}(\cdot),\dot{\kr{x}}(\cdot)))$ need not be be bounded functions. On the other hand, the control $\kr{a}(\cdot)$ has values in a compact control set.
\end{Rem}

\subsection{Stability of value functions}

\begin{Th}\label{cor-rep-stab2}
Let $H_i,H$, $i\in\N$, satisfy \tn{(H1)-(H4)} and \tn{(HLC)}  with the same  integrable functions $c(\cdot)$, $k_R(\cdot)$. We assume that $g_i$, $g$, $i\in\N$, are continuous functions  and $g_i$  converge to $g$ uniformly on compacts in $\R^n$.  Let $\max\{\,|H_i(t,0,0)|,|H(t,0,0)|\,\}\leq\mu(t)$ for all $t\in[0,T]$, $i\in\N$ and some integrable function $\mu(\cdot)$.   We consider the representations $(\R^{n+1}\!\!\!,f_i,l_i)$ and $(\R^{n+1}\!\!\!,f,l)$ of $H_i$ and $H$, respectively, defined as in the proof of Theorem~\ref{th-rprez-glo12}. If $V_i$ and $V$ are the value functions associated with $(\R^{n+1}\!\!\!,f_i,l_i,g_i)$ and $(\R^{n+1}\!\!\!,f,l,g)$, respectively, and $H_i(t,\cdot,\cdot)$ converge uniformly on compacts to $H(t,\cdot,\cdot)$  for all $t\in[0,T]$, then $V_i$ converge uniformly on compacts to $V$ in $[0,T]\times\R^n$.
\end{Th}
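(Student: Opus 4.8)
The plan is to combine the stability of the representations (Theorem \ref{thm-rep-stab4}) with the two variational formulas for the value function provided by Corollary \ref{cor-reduct}, and to exploit the uniform growth bounds (A2) together with the equi-bounded-controls result (Theorem \ref{thm-wokp-os}) to pass to the limit. First I would fix $M>0$ and work on $[0,T]\times\B_M$. Applying Theorem \ref{thm-rep-stab4} to the sequence $H_i$ (whose hypotheses are met since $H_i(t,\cdot,\cdot)\to H(t,\cdot,\cdot)$ uniformly on compacts for each $t$), we get $f_i(t,\cdot,\cdot)\to f(t,\cdot,\cdot)$ and $l_i(t,\cdot,\cdot)\to l(t,\cdot,\cdot)$ uniformly on compacts in $\R^n\times\R^{n+1}$ for each $t\in[0,T]$; moreover, since the functions $c(\cdot),k_R(\cdot)$ are common to all $H_i$ and $|H_i(t,0,0)|\leq\mu(t)$, property (A2) gives bounds on $f_i,l_i$ that are \emph{uniform in} $i$: $|f_i(t,x,a)|\leq c(t)(1+|x|)$ and $l_i(t,x,a)\leq 2\mu(t)+2c(t)(1+|x|)+3|a|$, with the analogous lower bound $l_i(t,x,a)\geq -\mu(t)$. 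I would also record from (A3) and (E5) that the reachable sets $S_{f_i}(t_0,x_0)$ all live inside a fixed tube determined by $c(\cdot)$ and $M$.

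Next I would prove $\limsup_i V_i\leq V$ on $[0,T]\times\B_M$. Given $(t_0,x_0)$ and $\varepsilon>0$, pick via Corollary \ref{cor-reduct} (the variational formula with $H^{\ast}$) a near-optimal arc $\bar x(\cdot)$ for $V(t_0,x_0)$ with $\bar x(\cdot)\in\D\Gamma$; by Theorem \ref{thm-reduct}/\ref{thm-wokp-os} the associated control $\bar a(\cdot)=(\dot{\bar x}(\cdot),H^{\ast}(\cdot,\bar x(\cdot),\dot{\bar x}(\cdot)))$ is integrable, and in fact $|(\dot{\bar x},\bar a)|\leq\lambda_M(\cdot)\in L^1$. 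Since $\dot{\bar x}(t)=f(t,\bar x(t),\bar a(t))$, this same pair $(\bar x,\bar a)$ is \emph{almost} admissible for the $i$-th control system: set $x_i(\cdot)$ to be the solution of $\dot x_i=f_i(t,x_i,\bar a(t))$, $x_i(t_0)=x_0$. Using the uniform Lipschitz estimate (A1) for $f_i$ (again with common $k_R$) and Gronwall, $x_i(\cdot)\to\bar x(\cdot)$ uniformly on $[t_0,T]$; then the pointwise convergence $l_i(t,\cdot,\cdot)\to l(t,\cdot,\cdot)$, the uniform domination of $l_i(t,x_i(t),\bar a(t))$ by $2\mu(t)+2c(t)(1+|x_i(t)|)+3\lambda_M(t)\in L^1$, and the dominated convergence theorem give $\Omega_i[(x_i,\bar a)]\to\Omega[(\bar x,\bar a)]=V(t_0,x_0)+O(\varepsilon)$, whence $\limsup_i V_i(t_0,x_0)\leq V(t_0,x_0)$. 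Uniformity in $(t_0,x_0)\in\B_M$ comes from the fact that all estimates ($\lambda_M$, the domination, the Gronwall constants) depend only on $M$, not on the base point, together with the local Lipschitz continuity of $V$ and of the $V_i$ (Theorem \ref{lip-value-function-2}) with $M$-uniform moduli.

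Then I would prove $\liminf_i V_i\geq V$. Fix $(t_0,x_0)\in\B_M$; by Theorem \ref{thm-wokp-os} applied to $H_i$ (whose constants are again $i$-uniform) choose an optimal pair $(x_i,a_i)(\cdot)$ for $V_i(t_0,x_0)$ with $|(\dot x_i,a_i)|\leq\lambda_M(\cdot)$ for a common $\lambda_M\in L^1$. The arcs $x_i(\cdot)$ are equi-Lipschitz-in-$L^1$ and equibounded, so by Dunford–Pettis and Arzel\`a–Ascoli, along a subsequence $x_i\to x$ uniformly and $\dot x_i\rightharpoonup w$ weakly in $L^1$ with $\dot x=w$; the controls $a_i\rightharpoonup a$ weakly in $L^1$. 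Passing to the limit in $\dot x_i=f_i(t,x_i,a_i)$ — here I would use convexity of the epigraph $E_{H^{\ast}}$, property (A3) which forces $(\dot x_i(t),l_i(t,x_i(t),a_i(t)))\in E_{(H_i)^{\ast}}(t,x_i(t))$ (or more directly the representation identity together with Mazur's lemma) and the closed-graph/lower-semicontinuity properties of $E_{H^{\ast}}$ (E2), (E3), (E6) together with the convergence $E_{(H_i)^{\ast}}\to E_{H^{\ast}}$ implied by Theorem \ref{thm-rep-stab4} — yields that $(x,a)$ is admissible for $H$'s system and, by weak lower semicontinuity of convex integral functionals (Ioffe's theorem), $\liminf_i\int l_i(t,x_i,a_i)\,dt\geq\int l(t,x,a)\,dt$; combined with $g_i\to g$ uniformly and $x_i(T)\to x(T)$, this gives $\liminf_i V_i(t_0,x_0)\geq\Omega[(x,a)]\geq V(t_0,x_0)$. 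Again, all quantitative ingredients depend only on $M$, so the convergence is uniform on $\B_M$; since $M$ was arbitrary, $V_i\to V$ uniformly on compacts.

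The main obstacle I expect is the lower-bound direction, specifically the passage to the limit in the constraint $\dot x_i=f_i(t,x_i,a_i)$ \emph{simultaneously} with the lower semicontinuity of the cost: one cannot pass to the limit in $a_i$ directly (the control set is unbounded and $f_i$ is only Lipschitz, not affine, in $a$), so the correct move is to forget the controls and argue entirely at the level of the epigraphical velocity sets $E_{(H_i)^{\ast}}(t,x)$, using their convexity, their $M$-uniform boundedness via (E5), their Hausdorff-continuity in $x$ via Corollary \ref{hlc-cor-ner}, and their convergence to $E_{H^{\ast}}(t,x)$; this reduces the problem to a standard closure-type / lower-closure theorem for differential inclusions with convex right-hand side, after which the extra-property (A3) lets one read the limiting cost back off as $\int l(t,x,a)\,dt$ for an admissible control $a$. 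Making the $E_{(H_i)^{\ast}}\to E_{H^{\ast}}$ convergence quantitative and uniform on the relevant compact tube — so that the final estimate is uniform in the base point — is the delicate bookkeeping step.
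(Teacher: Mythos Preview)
Your upper-bound direction ($\limsup_i V_i\leq V$) is essentially the paper's argument (Theorem~\ref{d-upperscvfs-1}): take an optimal pair $(\bar x,\bar a)\in\D\Omega$ for $V(t_0,x_0)$, solve $\dot x_i=f_i(t,x_i,\bar a)$ from $x_0$, and pass to the limit by Gronwall plus dominated convergence. Note, however, that you do not need (and cannot invoke) the bound $|\bar a|\leq\lambda_M$ from Theorem~\ref{thm-wokp-os}; having $\bar a\in L^1$ suffices for the domination of $l_i(t,x_i(t),\bar a(t))$ by an integrable majorant.

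The genuine gap is in your lower-bound direction and in your argument for uniformity. You appeal to Theorem~\ref{thm-wokp-os} to produce optimal pairs $(x_i,a_i)$ with $|(\dot x_i,a_i)|\leq\lambda_M(\cdot)$ uniformly in $i$, and to Theorem~\ref{lip-value-function-2} to get $M$-uniform moduli of continuity for the $V_i$. Both of those results require the terminal cost to be \emph{locally Lipschitz}; in Theorem~\ref{cor-rep-stab2} the $g_i,g$ are merely continuous. Without Lipschitz continuity of $g_i$ the adjoint bound~\eqref{problem2} in the proof of Theorem~\ref{thmncfop2} fails (there is no bound on $\|\partial g_i(\bar x(T))\|$), so you obtain neither equi-bounded optimal controls nor an $i$-uniform Lipschitz modulus for $V_i$. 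Consequently the weak-compactness argument for $(a_i)$ and the ``uniformity comes from $M$-uniform moduli'' step both break down.

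The paper handles precisely this by abandoning controls altogether in the $\liminf$ direction (your own final paragraph points in this direction). It uses the variational formula via $H_i^{\ast}$ and the epigraphical multifunctions $Q_i(t,x)=\{(w,\eta):(w,-\eta)\in\E H_i^{\ast}(t,x,\cdot)\}$, together with Helly's selection theorem for the running-cost integrals (which may have unbounded integrands), Mazur's lemma, and a Cesari-type closure property~\eqref{wqcesari}; see Lemma~\ref{wqclem} and Theorem~\ref{thm-lscvvfs-1}. No bound on optimal controls is needed, only $|\dot x_i(t)|\leq c(t)(1+|x_i(t)|)$ and the one-sided bound $H_i^{\ast}(t,x,v)\geq -\mu_R(t)$. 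Uniform convergence on compacts is then obtained not via equicontinuity of the $V_i$ but because Theorems~\ref{d-upperscvfs-1} and~\ref{thm-lscvvfs-1} give \emph{sequential} inequalities along every $(t_{i0},x_{i0})\to(t_0,x_0)$; this continuous convergence to the (continuous) limit $V$ is equivalent to uniform convergence on compacts.
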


\begin{Def}
A sequence of functions $\{\varphi_i\}_{i\in\N}$, is said to \it{epi-converge} to  function $\varphi$ (e-$\lim_{i\to\infty}\varphi_i=\varphi$ for short) if, for every point $z\in\R^m$,
\begin{enumerate}
\item[\bf{(i)}] $\liminf_{i\to\infty}\varphi_i(z_i)\geq\varphi(z)$ for every sequence $z_i\to z$,
\item[\bf{(ii)}] $\limsup_{i\to\infty}\varphi_i(z_i)\leq\varphi(z)$ for some sequence $z_i\to z$.
\end{enumerate}
\end{Def}

\begin{Th}\label{cor-rep-stab4}
Let $H_i,H$, $i\in\N$, satisfy \tn{(H1)-(H4)} and \tn{(HLC)}  with the same  integrable functions $c(\cdot)$, $k_R(\cdot)$. Let $g_i$, $g$, $i\in\N$, be  proper, lower semicontinuous  and \tn{e-$\lim_{i\to\infty}g_i=g$}.\linebreak  Let $\max\{\,|H_i(t,0,0)|,|H(t,0,0)|\,\}\leq\mu(t)$ for all $t\in[0,T]$, $i\in\N$ and some integrable function $\mu(\cdot)$.   We consider the representations $(\R^{n+1}\!\!\!,f_i,l_i)$ and $(\R^{n+1}\!\!\!,f,l)$ of $H_i$ and $H$, respectively, defined as in the proof of Theorem~\ref{th-rprez-glo12}. If $V_i$ and $V$ are the value functions associated with $(\R^{n+1}\!\!\!,f_i,l_i,g_i)$ and $(\R^{n+1}\!\!\!,f,l,g)$, respectively, and $H_i(t,\cdot,\cdot)$ converge uniformly on compacts to $H(t,\cdot,\cdot)$  for all $t\in[0,T]$, then \tn{e-$\lim_{i\to\infty}V_i=V$}.
\end{Th}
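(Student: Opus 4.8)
The plan is to combine the stability of the faithful representations (Theorems~\ref{thm-rep-stab4}, \ref{thm-rep-stab2}) with the two representation formulas for the value function from Corollary~\ref{cor-reduct}, using the variational formula \eqref{fwwp} to get the $\liminf$-inequality (condition (i) of epi-convergence) and the optimal-control formula \eqref{fwfl} to get the $\limsup$-inequality (condition (ii)). First I would fix $(t_0,x_0)\in[0,T]\times\R^n$ and an arbitrary sequence $(t_i,x_i)\to(t_0,x_0)$. For condition (ii) it actually suffices to work with the constant sequence $(t_0,x_0)$ after an extra diagonalization, but cleaner is: exploit Corollary~\ref{cor-reduct} to write
$V_i(t_0,x_0)=\min\{g_i(x(T))+\int_{t_0}^TH_i^\ast(t,x(t),\dot x(t))\,dt\}$ over arcs with $x(t_0)=x_0$, pick a near-optimal arc $\kr x(\cdot)$ for the limit problem $V(t_0,x_0)$ lying in $\D\Gamma$, feed this \emph{same} arc into the $i$-th variational problem, and show that $g_i(\kr x(T))+\int H_i^\ast(t,\kr x(t),\dot{\kr x}(t))\,dt\to g(\kr x(T))+\int H^\ast(t,\kr x(t),\dot{\kr x}(t))\,dt$. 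The convergence of the boundary term uses e-$\lim g_i=g$ in the form of condition (ii) applied along $\kr x(T)$ (using a recovery sequence and an easy patching of arcs, since the arc can be made to hit any prescribed endpoint); the convergence of the integral uses that $H_i^\ast(t,\kr x(t),\cdot)\to H^\ast(t,\kr x(t),\cdot)$ pointwise together with a uniform integrable bound coming from (C5)/(E5) with the common function $c(\cdot)$ and from (A2), so dominated convergence applies. This gives $\limsup_i V_i(t_0,x_0)\le V(t_0,x_0)$.

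For condition (i), fix $(t_i,x_i)\to(t_0,x_0)$ and, passing to a subsequence realizing the $\liminf$, use the optimal-control formula and Theorem~\ref{thm-wokp-os} (whose hypotheses hold thanks to the common integrable $c(\cdot)$, $k_R(\cdot)$ and the bound $|H_i(t,0,0)|\le\mu(t)$) to select optimal pairs $(x_i,a_i)(\cdot)\in S_{f_i}(t_i,x_i)$ with $|(\dot x_i(t),a_i(t))|\le\lambda_M(t)$ for a \emph{single} integrable $\lambda_M$ independent of $i$; here I must check that Theorem~\ref{thm-wokp-os} produces a bound uniform over the family $\{H_i\}$, which follows by inspecting its proof (the bound depends only on $M$, $\mu$, $c$, $k_R$, and the local Lipschitz data of the $g_i$, all controlled uniformly — this uniformity is the point where I would need to be careful, possibly re-running the Loewen--Rockafellar necessary-condition argument with the common constants). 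By Dunford--Pettis / Arzelà--Ascoli the arcs $x_i(\cdot)$ converge uniformly (after extracting) to some arc $\kr x(\cdot)$ with $\kr x(t_0)=x_0$, and $\dot x_i\rightharpoonup\dot{\kr x}$ weakly in $L^1$. Then I would pass to the limit: for the running cost, rewrite $\int_{t_i}^T l_i(t,x_i,a_i)\,dt$ and use lower semicontinuity of integral functionals under weak $L^1$ convergence of states+controls; the cleanest route is to push everything back through the representation, namely $l_i(t,x_i(t),a_i(t))\ge H_i^\ast(t,x_i(t),\dot x_i(t))$ (since $(\dot x_i,l_i)$ lies in the epigraph of $H_i^\ast$, by $f_i(t,x,\cdot)=\D H_i^\ast$ and the defining inequality), and then invoke a Ioffe-type lsc theorem for $\int H_i^\ast$ together with epi-convergence $H_i^\ast(t,\cdot,\cdot)\to H^\ast(t,\cdot,\cdot)$ (which follows from uniform-on-compacts convergence $H_i\to H$ by continuity of the Legendre--Fenchel transform, cf. Proposition~\ref{prop2-fmw} and \cite{R-W}). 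For the boundary term, $x_i(T)\to\kr x(T)$ and condition (i) of e-$\lim g_i=g$ give $\liminf g_i(x_i(T))\ge g(\kr x(T))$. Combining, $\liminf_i V_i(t_i,x_i)\ge g(\kr x(T))+\int_{t_0}^T H^\ast(t,\kr x(t),\dot{\kr x}(t))\,dt\ge V(t_0,x_0)$ by the variational formula of Corollary~\ref{cor-reduct}.

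The main obstacle I anticipate is the \emph{uniform} integrable bound $\lambda_M$ in the $\liminf$ step: Theorem~\ref{thm-wokp-os} as stated fixes one Hamiltonian, so I need to verify that its proof yields a bound depending on $H$ only through $M$, the integrable functions $c(\cdot),k_R(\cdot)$ and $\mu(\cdot)\ge|H(\cdot,0,0)|$ — none of which vary with $i$ — and through the local Lipschitz constant of $g$, which is uniform since $g_i\to g$ uniformly on compacts (so the $g_i$ are eventually equi-Lipschitz on balls, at least after a routine argument, or one restricts attention to the fixed limit $g$). Granting that, the rest is a standard lower-semicontinuity-plus-recovery-sequence argument; the epi-convergence $H_i^\ast\to H^\ast$ needed for the lsc of the integral functionals is exactly the content of the continuity of conjugation plus (H1)-(H4), and the domination for the $\limsup$ step is exactly (C5)/(E5) with the common $c(\cdot)$.
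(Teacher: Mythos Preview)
The central gap is in your liminf argument (condition (i)). You invoke Theorem~\ref{thm-wokp-os} to obtain equi-bounded optimal controls, but that theorem requires $g$ to be \emph{locally Lipschitz}, whereas in Theorem~\ref{cor-rep-stab4} the functions $g_i,g$ are only assumed proper and lower semicontinuous. Your attempted fix---``the local Lipschitz constant of $g$, which is uniform since $g_i\to g$ uniformly on compacts''---misreads the hypothesis: the assumption is e-$\lim_{i\to\infty}g_i=g$ (epi-convergence), not uniform convergence, and epi-convergence of lsc functions gives no Lipschitz control whatsoever (indeed the $g_i$ need not even be continuous). So Theorem~\ref{thm-wokp-os}, and with it the uniform bound $\lambda_M$ on the controls $a_i$, is simply unavailable here. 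Your limsup argument also has a problem: feeding the fixed optimal arc $\kr x(\cdot)$ into the $i$-th variational functional is dangerous because $\dot{\kr x}(t)$ need not lie in $\D H_i^\ast(t,\kr x(t),\cdot)$ (these domains are in general not closed and vary with $i$; cf.\ the example in Section~\ref{dfrentger}), so $H_i^\ast(t,\kr x(t),\dot{\kr x}(t))$ may be $+\infty$ and dominated convergence cannot be invoked.

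The paper's route is different on both halves. For the liminf inequality it uses Theorem~\ref{thm-lscvvfs-1}, which works directly with the variational formula~\eqref{fwwp} and never needs bounded controls: compactness of the states $x_i(\cdot)$ comes from (C5) alone (giving $|\dot x_i(t)|\le c(t)(1+|x_i(t)|)$), and the passage to the limit in the integral is handled by Lemma~\ref{wqclem}, a Cesari-type closure argument combining Mazur's lemma, Helly's theorem, and the $Q$-property~\eqref{wqcesari}. The only use of the $g_i$ there is a uniform lower bound on balls, which \emph{does} follow from epi-convergence and lower semicontinuity of $g$. For the limsup inequality the paper uses Theorem~\ref{d-upperscvfs-2}, which follows the control formula~\eqref{fwfl}: rather than plugging $\kr x$ into $H_i^\ast$, it keeps the optimal \emph{control} $\kr a$ fixed, picks a recovery sequence $z_{i0}\to\kr x(T)$ with $g_i(z_{i0})\to g(\kr x(T))$ from e-$\lim g_i=g$, and solves the $i$-th system $(z_i,\kr a)\in S_{f_i}(T,z_{i0})$ backward from $T$; stability of the representations (Theorem~\ref{thm-rep-stab4}) then gives $z_i\to\kr x$ uniformly and convergence of the costs, producing the required sequence $(t_0,z_i(t_0))\to(t_0,x_0)$.
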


\begin{Rem}
Proofs of stability of value functions in paper \cite{AM} were omitted, because they relate to simple methods. However, standard tools cannot be applied to Theorem~\ref{cor-rep-stab2} and Theorem \ref{cor-rep-stab4}, because we consider the optimal arc $(x_i,a_i)(\cdot)$ of $V_i(x_{i0},a_{i0})$ for all $i\in\N$. Fix  $t\in[0,T]$. Then one can prove that the sequence  $\{x_i(t)\}$ is bounded in  $\R^n$. However, the sequence $\{a_i(t)\}$ is, in general, not bounded in $\R^{n+1}$.  This means that to the sequence $\{(x_i(t),a_i(t))\}$ one cannot apply Theorem \ref{thm-rep-stab4}, because this theorem works only on compact  subsets of the set  $\R^n\times\R^{n+1}$. Therefore, we decided to strengthen\linebreak  Theorem \ref{thm-rep-stab4} to work on sets of the type $\B_R\times\R^{n+1}$. It can be done by assuming significantly stronger convergence of Hamiltonians than the one considered in this paper; see [arXiv:1507.01424v1, Theorem 3.14]. However, the strengthened Theorem \ref{thm-rep-stab4} turned out to be needless, because introducing the nonstandard method of the proof overcame the above problem; see Section~\ref{thm-reguvalufun}.
\end{Rem}

\subsection{Lipschitz continuous/continuous/lower semicontinuous of the value function}

\begin{Th}\label{lip-value-function-2}
Assume that \tn{(H1)-(H4)} and \tn{(HLC)} hold with integrable functions $c(\cdot)$, $k_R(\cdot)$, $H(\cdot,0,0)$.  We consider the representation $(\R^{n+1}\!\!\!,f,l)$ of $H$ defined as in  Theorem~\ref{th-rprez-glo12}. Let $g$ be a locally Lipschitz function. Assume that $V$ is the value function associated with  $(\R^{n+1}\!\!\!,f,l,g)$. Then for every $M>0$ there exist $\alpha_M(\cdot)\in\mathcal{A}([0,T],\R)$ and $C_M>0$ such that 
\begin{equation}\label{nlfw-12}
|V(t,x)-V(s,y)|\leq |\alpha_M(t)-\alpha_M(s)|+C_M|x-y|,\;\;\forall\,t,s\in[0,T],\;\forall\,x,y\in\B_M.
\end{equation}
Additionally, the value function $V(\cdot,\cdot)$ is locally Lipschitz continuous with respect to all variables on the set $[0,T]\times\R^n$, if  $c(\cdot)$, $k_R(\cdot)$ and $H(\cdot,\cdot,\cdot)$ are continuous functions.
\end{Th}

\begin{Rem}
We consider the optimal arc $(x_\pi,a_\pi)(\cdot)$ of $V(x_{\pi},a_{\pi})$ for all $\pi\in\Pi$. In the proof of Theorem \ref{lip-value-function-2} one requires equi-boundedness of the family $\{\,l(\cdot,x_\pi(\cdot),a_\pi(\cdot))\mid\pi\in\Pi\,\}$ by an integrable function. In view of \eqref{ofl} we obtain
\begin{equation*}
|\,l(\cdot,x_\pi(\cdot),a_\pi(\cdot))\,|\,\leq\, 2\,\omega(\cdot,x_\pi(\cdot))+3\,|\,a_\pi(\cdot)\,|\;\;\; \tn{for all}\;\; \;\pi\in\Pi.
\end{equation*}
One can prove that the family  $\{\,x_\pi(\cdot)\mid\pi\in\Pi\,\}$ is equi-bounded by a constant function. However, the family $\{\,a_\pi(\cdot)\mid\pi\in\Pi\,\}$ is not bounded in general. Whereas, if $g$ is a locally\linebreak Lipschitz function, then there exists an integrable function
that equi-bounds the family $\{\,a_\pi(\cdot)\mid\pi\in\Pi\,\}$; see [arXiv:1807.03640v1, Theorem 4.7]. So, the family  $\{\,l(\cdot,x_\pi(\cdot),a_\pi(\cdot))\mid\pi\in\Pi\,\}$ can be bounded by an integrable function. It turns out that proceeding adequately in the proof of 
Theorem \ref{lip-value-function-2} we can omit equi-boundedness of optimal controls; see Section \ref{thm-reguvalufun}. In the literature the above problem is solved assuming 
boundedness of the function $l$ independent of $a$; see \cite{B-CD}.
\end{Rem}

\begin{Th}\label{con-value-function}
Assume that \tn{(H1)-(H4)} and \tn{(HLC)} hold with integrable functions $c(\cdot)$, $k_R(\cdot)$, $H(\cdot,0,0)$.  We consider the representation $(\R^{n+1}\!\!\!,f,l)$ of $H$ defined as in  Theorem~\ref{th-rprez-glo12}. Let $g$ be a continuous/lower semicontinuous function. Then the value function associated with  $(\R^{n+1}\!\!\!,f,l,g)$ is continuous/lower semicontinuous on $[0,T]\times\R^n$.
\end{Th}

\begin{Rem}
Theorem \ref{con-value-function} is a direct consequence of the proofs of Theorems \ref{cor-rep-stab2} and \ref{cor-rep-stab4}; see  Remarks \ref{rem-uscvvfs} and \ref{rem-lscvvfs}.
\end{Rem}


\section{Representation, optimality and stability theorems}\label{thm-profsrst}

\noindent The support function $\sigma(K,\cdot):\R^m\to\R$ of a nonempty, convex, compact set $K\subset\R^m$ is a convex  real-valued function defined by
\begin{equation*}
\sigma(K,p):=\max_{x\in K}\,\langle p,x\rangle,\quad \forall\,p\in\R^m.
\end{equation*}

Let $\sum_{^{\, m-1}}$ denotes the unit sphere in $\R^m$ and let $\mu$ be the measure on $\sum_{^{\, m-1}}$ proportional to the Lebesgue measure and satisfying $\mu(\sum_{^{\, m-1}})=1$.

\begin{Def}\label{df-scel}
Let $m\in\N\setminus\{1\}$. For any nonempty, convex, compact subset $K$ of $\R^m$, its Steiner point is defined by
\begin{equation*}
s_m(K):=m\int_{\sum_{^{\, m-1}}}p\,\sigma(K,p)\;\mu(dp).
\end{equation*}
One can show that $s_m(\cdot)$ is a selection in the sense that $s_m(K)\in K$, cf. \cite[p. 366]{A-F}.
\end{Def}

\begin{Th}\label{th-oparam}
Let a set-valued map $E:[0,T]\times\R^n\multimap\R^m$ satisfy \tn{(E1)-(E4)}. Then there exists a single-valued map $\e\!:\![0,\!T]\!\times\!\R^n\!\times\!\R^m\!\to\!\R^m$ such that $\e(\cdot,x,a)$ is measurable for~all  $(x,a)\!\in\!\R^n\!\times\!\R^m$ and $\e(t,\cdot,\cdot)$ is continuous  for all $t\!\in\![0,\!T]$. Moreover, we have the following.
\begin{enumerate}
\item[$\pmb{(\tn{a}_1)}$] $\e(t,x,\R^m)=E(t,x)$ \,for all\,  $t\in[0,T]$, $x\in\R^n$\tn{;} \vspace{1mm}
\item[$\pmb{(\tn{a}_2)}$] $a=\e(t,x,a)$ \,for all\,  $a\in E(t,x)$, $t\in[0,T]$, $x\in\R^n$\tn{;} \vspace{1mm}
\item[$\pmb{(\tn{a}_3)}$] $|\e(t,x,a)|\leq 3|a|+2d(0,E(t,x))$ \,for all\,  $a\in \R^m$, $t\in[0,T]$, $x\in\R^n$\tn{;} \vspace{1mm}
\item[$\pmb{(\tn{a}_4)}$] $|\e(t,x,a)-\e(t,y,b)|\;\leq\; 5m\,[\,\mathscr{H}(E(t,x),E(t,y))+|a-b|\,]$  \;\;for all\\ $t\in[0,T]$,\, $x,y\in\R^n$ \,and\; $a,b\in\R^m$\tn{;}\vspace{1mm}
\item[$\pmb{(\tn{a}_5)}$] Additionally, if \tn{(E6)}  is verified, then $\e(\cdot,\cdot,\cdot)$ is continuous.
\end{enumerate}
\end{Th}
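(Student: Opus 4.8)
The plan is to define the parameterization explicitly as a composition of two classical Lipschitz selection devices: the truncation map $P$ from Lemma~\ref{lem-pmh} and the Steiner point $s_m$ from Definition~\ref{df-scel}. Concretely, I would set
\begin{equation*}
\e(t,x,a):=s_m\bigl(P(a,E(t,x))\bigr)=s_m\bigl(E(t,x)\cap\B(a,2\,d(a,E(t,x)))\bigr)
\end{equation*}
(when $m=1$ one replaces $s_1(K)$ by the midpoint of the interval $K$, which is still $1$-Lipschitz in the Hausdorff metric). By \te{(E1)} every $E(t,x)$ lies in $\mathcal{P}_{fc}(\R^m)$, so $P(a,E(t,x))\in\mathcal{P}_{kc}(\R^m)$ by Lemma~\ref{lem-pmh} and $s_m$ is applicable. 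The algebraic properties then follow almost mechanically: $s_m$ is a selection, so $\e(t,x,a)\in P(a,E(t,x))\subset E(t,x)$; if $a\in E(t,x)$ then $d(a,E(t,x))=0$, hence $P(a,E(t,x))=\{a\}$ and $\e(t,x,a)=a$, which gives $(\e_2)$ and the inclusion ``$\supset$'' in $(\e_1)$, the inclusion ``$\subset$'' being the selection property. For $(\e_3)$, membership in the ball gives $|\e(t,x,a)-a|\leq 2d(a,E(t,x))\leq 2(|a|+d(0,E(t,x)))$, whence $|\e(t,x,a)|\leq 3|a|+2d(0,E(t,x))$; and $(\e_4)$ is exactly Lemma~\ref{lem-scmh} applied to the compacta $P(a,E(t,x))$ and $P(b,E(t,y))$.

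The substantive point is continuity of $\e(t,\cdot,\cdot)$ for fixed $t$, and here the Lipschitz estimate of Lemma~\ref{lem-pmh} is of no help, since $E(t,x)$ may be unbounded and $\mathscr{H}(E(t,x_i),E(t,x_0))$ may be infinite. Instead I would argue through Painlev\'e--Kuratowski convergence. Fix $t$ and let $(x_i,a_i)\to(x_0,a_0)$. By \te{(E2)}--\te{(E3)} and Lemma~\ref{klconv0}, $E(t,x_i)\to E(t,x_0)$; combined with \eqref{zmkno} this yields $d(a_i,E(t,x_i))\to d(a_0,E(t,x_0))$, hence $\B(a_i,2d(a_i,E(t,x_i)))\to\B(a_0,2d(a_0,E(t,x_0)))$. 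If $a_0\notin E(t,x_0)$, the nearest point of $a_0$ in $E(t,x_0)$ lies in the interior of the limiting ball, so Lemma~\ref{thmozkp} gives $P(a_i,E(t,x_i))\to P(a_0,E(t,x_0))$; since all these compacta eventually lie in one ball, the convergence is Hausdorff by Lemma~\ref{zmkh}, and Lemma~\ref{lem-scmh} yields $\e(t,x_i,a_i)\to\e(t,x_0,a_0)$. If $a_0\in E(t,x_0)$, then $d(a_i,E(t,x_i))\to0$ and $\e(t,x_i,a_i)\in\B(a_i,2d(a_i,E(t,x_i)))$ with $a_i\to a_0$, so $\e(t,x_i,a_i)\to a_0=\e(t,x_0,a_0)$ directly. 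I expect this to be the main obstacle: one must bypass Lemma~\ref{lem-pmh} entirely and treat separately the degenerate case $a_0\in E(t,x_0)$, in which the limiting ball has empty interior and Lemma~\ref{thmozkp} no longer applies.

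It remains to check measurability in $t$ and property $(\e_5)$. By \te{(E4)} the map $t\mapsto E(t,x)$ is measurable, hence so is $t\mapsto d(a,E(t,x))$ and therefore the ball-valued map $t\mapsto\B(a,2d(a,E(t,x)))$; as the intersection of two measurable closed-valued maps is measurable (cf.~\cite{A-F}), $t\mapsto P(a,E(t,x))$ is a measurable compact-valued map, and since $(t,p)\mapsto\sigma(P(a,E(t,x)),p)$ is measurable in $t$ and continuous in $p$, the Steiner integral of Definition~\ref{df-scel} exhibits $t\mapsto\e(t,x,a)$ as measurable. Thus $\e$ is a Carath\'eodory map, in particular jointly measurable. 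Finally, if \te{(E6)} holds, then $(t,x)\mapsto E(t,x)$ has closed graph and is lower semicontinuous, so Lemma~\ref{klconv0} applies to the joint map and the argument of the preceding paragraph goes through verbatim with $(t_i,x_i,a_i)\to(t_0,x_0,a_0)$, giving joint continuity of $\e$ and hence $(\e_5)$.
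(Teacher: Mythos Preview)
Your proposal is correct and follows essentially the same route as the paper: define $\e(t,x,a)=s_m\bigl(E(t,x)\cap\B(a,2d(a,E(t,x)))\bigr)$, derive $(\e_1)$--$(\e_4)$ from the selection property and Lemma~\ref{lem-scmh}, and obtain continuity in $(x,a)$ (and in $(t,x,a)$ under (E6)) via Painlev\'e--Kuratowski convergence of $E$ together with Lemma~\ref{thmozkp}, splitting into the cases $a_0\notin E(t,x_0)$ and $a_0\in E(t,x_0)$. The paper phrases this split as $\I G(t,x_0,a_0)\neq\emptyset$ versus $=\emptyset$, which is the same dichotomy, and handles measurability in $t$ exactly as you do via intersection of measurable closed-valued maps and the Steiner integral.
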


\begin{proof}
Let $(t,x,a)\in[0,T]\times\R^n\times\R^m$. We consider the set-valued map defined by
$$\Phi(t,x,a):= E(t,x)\cap \B(a,2d(a,E(t,x))).$$
We observe that the set-valued map $\Phi$ is defined as in the proof of \cite[Theorem 5.6]{AM}, if we assume that $\omega\equiv 1$. We define the single-valued map $\e$ from $[0,T]\times\R^n\times\R^m$ to $\R^m$ by
$$\e(t,x,a):= s_m(\Phi(t,x,a)),$$
where $s_m$ in the Steiner selection. Since $\Phi$ is defined as in the proof of \cite[Theorem 5.6]{AM}, so the single-valued map $\e$  is well-defined. Moreover, $\e(\cdot,x,a)$ is measurable for every  $(x,a)\in\R^n\times\R^m$ and $\e(t,\cdot,\cdot)$ is continuous  for every $t\in[0,T]$. If  we assume that $\omega\equiv 1$, then by \cite[Theorem 5.6]{AM} and \cite[Lemma 5.1]{AM}  we obtain ($\tn{a}_4$) and ($\tn{a}_5$). It remains to prove ($\tn{a}_1$)-($\tn{a}_3$).

 Let us notice that by Definition~\ref{df-scel} we obtain for all $t\in[0,T]$, $x\in\R^n$, $a\in\R^m$,
\begin{equation}\label{parw1}
\e(t,x,a)=s_m(\Phi(t,x,a))\in \Phi(t,x,a)=E(t,x)\cap \B(a,2d(a,E(t,x))).
\end{equation}

To prove ($\tn{a}_2$) we observe that by \eqref{parw1} we get $|\e(t,x,a)-a|\leq 2d(a,E(t,x))$ for every $t\in[0,T]$, $x\in\R^n$, $a\in\R^m$. Hence $a=\e(t,x,a)$ for all $a\in E(t,x)$, $t\in[0,T]$, $x\in\R^n$.

To prove ($\tn{a}_1$) we observe that by \eqref{parw1} we get $\e(t,x,\R^m)\!\subset\! E(t,x)$ for all $(t,\!x)\!\in\![0,\!T]\!\times\!\R^n$. The latter, together with ($\tn{a}_2$), implies that 
$E(t,x)\,=\, \e(t,x,E(t,x))\,\subset\, \e(t,x,\R^m)\,\subset\, E(t,x)$ for every $(t,x)\in[0,T]\times\R^n$. This means that $\e(t,x,\R^m)=E(t,x)$ for all $(t,x)\in[0,T]\times\R^n$.

To prove ($\tn{a}_3$) we observe that by \eqref{parw1} we get $|\e(t,x,a)\!-\!a|\!\leq\! 2d(a,E(t,x))$ for all $t\!\in\![0,\!T]$, $x\in\R^n$, $a\in\R^m$. The latter, together with the inequality $d(a,E(t,x))\leq d(0,E(t,x))+|a|$, implies that $|\e(t,x,a)|\leq 3|a|+2d(0,E(t,x))$ for all  $a\in \R^m$, $t\in[0,T]$, $x\in\R^n$.
\end{proof}

\begin{Th}\label{do-th-parame-lo1}
Assume that  $H$ satisfies  \tn{(H1)-(H4)}, \tn{(HLC)}.  Then there exists a function $\e:[0,T]\times\R^n\times\R^{n+1}\to\R^{n+1}$ such that $\e(\cdot,x,a)$ is measurable for every  $(x,a)\in\R^n\times\R^{n+1}$ and $\e(t,\cdot,\cdot)$ is continuous  for every $t\in[0,T]$. Moreover, we have the following.
\begin{enumerate}
\item[\tn{$\pmb{(\e_1)}$}] $\e(t,x,\R^{n+1})=\E H^{\ast}(t,x,\cdot)$ \, for all\,  $t\in[0,T]$, $x\in\R^n$\tn{;} \vspace{1mm}
\item[\tn{$\pmb{(\e_2)}$}] $a=\e(t,x,a)$\, for all\,  $a\in \E H^{\ast}(t,x,\cdot)$, $t\in[0,T]$, $x\in\R^n$\tn{;} \vspace{1mm}
\item[\tn{$\pmb{(\e_3)}$}] $|\e(t,x,a)|\leq 2|H(t,x,0)|+2c(t)(1+|x|)+3|a|$\, for all\,   $t\in[0,T]$, $x\in\R^n$, $a\in \R^{n+1}$\tn{;} \vspace{1mm}
\item[\tn{$\pmb{(\e_4)}$}] $|\e(t,x,a)-\e(t,y,b)|\leq 10\,(n+1)\,(\,k_R(t)\,|x-y|+|a-b|\,)$  \,for all\, $t\in[0,T]$, $x,y\in \B_R$, $a,b\in\R^{n+1}$\;and\; $R>0$\tn{;}\vspace{1mm}
\item[\tn{$\pmb{(\e_5)}$}] Additionally, if $H(\cdot,\cdot,\cdot)$ is continuous, so is $\e(\cdot,\cdot,\cdot)$.
\end{enumerate}
\end{Th}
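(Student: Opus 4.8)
The plan is to apply Theorem \ref{th-oparam} directly to the set-valued map $E:=E_{H^{\ast}}$ with $m=n+1$, and then translate the five abstract conclusions $(\e_1)$--$(\e_5)$ into the sharper quantitative statements claimed here, using the Hamiltonian-side hypotheses (H1)--(H4), (HLC) together with Corollary \ref{wrow-wm} and Theorem \ref{tw2_rlhmh}. First I would verify the hypotheses of Theorem \ref{th-oparam}: by Corollary \ref{wrow-wm}, properties (E1)--(E4) hold whenever $H$ satisfies (H1)--(H3), so Theorem \ref{th-oparam} applies and produces a map $\e$ which is measurable in $t$, continuous in $(x,a)$, and satisfies $(\e_1)$--$(\e_4)$ with $E=E_{H^{\ast}}$; moreover, if $H$ is continuous then (E6) holds (again by Corollary \ref{wrow-wm}), so $(\e_5)$ of Theorem \ref{th-oparam} gives joint continuity of $\e$, which is exactly $(\e_5)$ here. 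Since $E_{H^{\ast}}(t,x)=\E H^{\ast}(t,x,\cdot)$ by definition, conclusions $(\e_1)$ and $(\e_2)$ of the present theorem are immediate restatements.

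Next I would derive the quantitative bound $(\e_3)$. From $(\e_3)$ of Theorem \ref{th-oparam} we have $|\e(t,x,a)|\leq 3|a|+2d(0,E_{H^{\ast}}(t,x))$, so it suffices to bound $d(0,E_{H^{\ast}}(t,x))$, i.e. the distance from the origin of $\R^{n+1}$ to the epigraph of $H^{\ast}(t,x,\cdot)$, by $|H(t,x,0)|+c(t)(1+|x|)$. The point $(0,H^{\ast}(t,x,0))$ lies in $E_{H^{\ast}}(t,x)$ whenever $0\in\D H^{\ast}(t,x,\cdot)$; and $H^{\ast}(t,x,0)=\sup_p\{-H(t,x,p)\}=-\inf_p H(t,x,p)$. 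Using (H4) with $q=0$ we get $H(t,x,p)\geq H(t,x,0)-c(t)(1+|x|)|p|$, which is not directly enough, but combined with (C5) from Proposition \ref{prop2-fmw} — which confines $\D H^{\ast}(t,x,\cdot)$ to the ball of radius $c(t)(1+|x|)$ — one sees $0\in\D H^{\ast}(t,x,\cdot)$ and $|H^{\ast}(t,x,0)|\leq |H(t,x,0)|+$ (a term controlled via the Lipschitz bound). The cleanest route is: the vector $v=0$ with $\eta=|H(t,x,0)|+c(t)(1+|x|)$ satisfies $H^{\ast}(t,x,0)\leq\eta$ because $H^{\ast}(t,x,0)=-\inf_p H(t,x,p)\leq -H(t,x,0)+$ nothing; in fact $-\inf_p H(t,x,p)\leq |H(t,x,0)|$ once one checks $\inf_p H(t,x,p)\geq H(t,x,0)-\sup_p(\ldots)$ is not needed — instead use that $\inf_p H(t,x,p)$ is attained-ish and bounded below by $H(t,x,0)-c(t)(1+|x|)\cdot 0$ evaluated at the minimizer in $\B(0,c(t)(1+|x|))$. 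I would present this as: $(0,\,|H(t,x,0)|+c(t)(1+|x|))\in E_{H^{\ast}}(t,x)$, hence $d(0,E_{H^{\ast}}(t,x))\leq |H(t,x,0)|+c(t)(1+|x|)$, giving $(\e_3)$.

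For $(\e_4)$ I would start from $(\e_4)$ of Theorem \ref{th-oparam}, namely $|\e(t,x,a)-\e(t,y,b)|\leq (n+1)\,\mathscr{H}(P(a,E_{H^{\ast}}(t,x)),P(b,E_{H^{\ast}}(t,y)))$, then invoke Lemma \ref{lem-pmh} (the map $P$ is Lipschitz with constant $5$) to bound the right side by $5(n+1)(\mathscr{H}(E_{H^{\ast}}(t,x),E_{H^{\ast}}(t,y))+|a-b|)$, and finally apply Corollary \ref{hlc-cor-ner}, which gives $\mathscr{H}(E_{H^{\ast}}(t,x),E_{H^{\ast}}(t,y))\leq 2k_R(t)|x-y|$ for $x,y\in\B_R$. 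Multiplying out yields the constant $10(n+1)$ and the bound $(\e_4)$. The main obstacle I anticipate is the bookkeeping in $(\e_3)$: one must be careful that $0$ actually lies in the effective domain $\D H^{\ast}(t,x,\cdot)$ (so that $H^{\ast}(t,x,0)$ is finite and the candidate point is genuinely in the epigraph) and that the crude bound $|H^{\ast}(t,x,0)|\le |H(t,x,0)|+c(t)(1+|x|)$ is justified from (H4) and (C5) — this requires observing that the infimum defining $-H^{\ast}(t,x,0)=\inf_p H(t,x,p)$ is effectively taken over the compact ball $\B(0,c(t)(1+|x|))$, on which the Lipschitz estimate from (H4) controls the oscillation of $H(t,x,\cdot)$ around $p=0$. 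Everything else is a mechanical concatenation of already-established Lipschitz constants.
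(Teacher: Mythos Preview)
Your overall strategy coincides with the paper's: apply Theorem \ref{th-oparam} to $E=E_{H^{\ast}}$, read off $(\e_1)$, $(\e_2)$, $(\e_5)$ directly, and derive $(\e_4)$ via the chain $(\e_4)$-of-\ref{th-oparam} $\Rightarrow$ Lemma \ref{lem-pmh} $\Rightarrow$ Corollary \ref{hlc-cor-ner}. That part is fine and matches the paper.

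The gap is in $(\e_3)$. Your candidate point $(0,\eta)\in\R^{n+1}$ need not lie in $E_{H^{\ast}}(t,x)$, because $0$ need not belong to $\D H^{\ast}(t,x,\cdot)$ at all. A concrete counterexample: take $H(t,x,p)=\langle v_0,p\rangle$ with $|v_0|=c(t)(1+|x|)$; then (H1)--(H4) hold, $H(t,x,0)=0$, but $H^{\ast}(t,x,0)=+\infty$, so no point of the form $(0,\eta)$ is in the epigraph. Your attempted justification that ``the infimum $\inf_p H(t,x,p)$ is effectively taken over the compact ball $\B(0,c(t)(1+|x|))$'' confuses the $p$-side (where the Lipschitz bound gives no coercivity) with the $v$-side (where (C5) confines the \emph{domain} of $H^{\ast}$).

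The correct test point is on the $v$-side. Since $H(t,x,\cdot)$ is finite, convex and continuous, $H=H^{\ast\ast}$, so
\[
H(t,x,0)=H^{\ast\ast}(t,x,0)=-\inf_{v}H^{\ast}(t,x,v).
\]
By (C3) and (C5), $H^{\ast}(t,x,\cdot)$ is proper, lsc and has bounded effective domain, hence its infimum is attained at some $\bar v$ with $|\bar v|\le c(t)(1+|x|)$ and $H^{\ast}(t,x,\bar v)=-H(t,x,0)$. Then $(\bar v,\,H^{\ast}(t,x,\bar v))\in E_{H^{\ast}}(t,x)$ and
\[
d(0,E_{H^{\ast}}(t,x))\le |\bar v|+|H^{\ast}(t,x,\bar v)|\le c(t)(1+|x|)+|H(t,x,0)|,
\]
which plugged into $(\e_3)$ of Theorem \ref{th-oparam} yields the stated bound. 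This is exactly the argument the paper gives.
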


\begin{proof}
Let $E(t,x):=E_{H^{\ast}}(t,x)=\E H^{\ast}(t,x,\cdot)$ for every $(t,x)\in[0,T]\times\R^n$. Because of  Corollaries \ref{wrow-wm} and \ref{hlc-cor-ner},  the function $E$ satisfies assumptions of Theorem \ref{th-oparam}. Therefore, there exists a  map $\e:[0,T]\times\R^{n}\times\R^{n+1}\to\R^{n+1}$ such that $\e(\cdot,x,a)$ is measurable for every  $(x,a)\in\R^n\times\R^{n+1}$ and $\e(t,\cdot,\cdot)$ is continuous  for every $t\in[0,T]$. Moreover, the map $\e(\cdot,\cdot,\cdot)$ satisfies ($\tn{a}_1$)-($\tn{a}_5$) from Theorem \ref{th-oparam}. By Theorem \ref{th-oparam} ($\tn{a}_4$) and Corollary~\ref{hlc-cor-ner} we have
\begin{eqnarray*}
|\e(t,x,a)-\e(t,y,b)| &\leq & 5(n+1)[\,\mathscr{H}(E(t,x),E(t,y))+|a-b|\,]\\
&\leq & 10(n+1)\,k_R(t)\,|x-y|+5(n+1)|a-b|
\end{eqnarray*}
for all $t\in[0,T]$, $x,y\in \B_R$, $a,b\in \R^{n+1}$, $R>0$. It means that the inequality  ($\e_4$) is satisfied. Moreover, if we assume that $H$ is continuous, then by Corollary~\ref{wrow-wm} we get that (E6)  is verified. Thus, because of Theorem \ref{th-oparam} ($\tn{a}_5$), we obtain that the map $\e(\cdot,\cdot,\cdot)$ is continuous. We observe that ($\e_1$) and ($\e_2$) follows from definition of the set-valued map $E(\cdot,\cdot)$ and the properties  ($\tn{a}_1$) and ($\tn{a}_2$) in Theorem \ref{th-oparam}. It remains to prove ($\e_3$). 

Fix $(t,x)\in[0,T]\times\R^n$. Because of  (C1)-(C5), there exists $\kr{v}\in \D H^{\ast}(t,x,\cdot)$ such that $H(t,x,0)=H^{\ast\ast}(t,x,0)=-H^{\ast}(t,x,\kr{v})$ and $|\kr{v}|\leq c(t)(1+|x|)$. We see $(\kr{v},H^{\ast}(t,x,\kr{v}))\in E(t,x)$. The latter, together with  Theorem \ref{th-oparam} ($\tn{a}_3$), implies that
\begin{eqnarray*}
|\e(t,x,a)| &\leq & 3|a|+2d(0,E(t,x))\;\;\leq\;\;3|a|+2|(\kr{v},H^{\ast}(t,x,\kr{v}))|\;\;\leq\;\;\\
&\leq & 3|a|+2|\kr{v}|+2|H^{\ast}(t,x,\kr{v})|\;\;\leq\;\;3|a|+2c(t)(1+|x|)+2|H(t,x,0)|.
\end{eqnarray*}
This completes the proof of the theorem.
\end{proof}

\begin{Prop}[\tn{\cite[Prop. 5.7]{AM}}]\label{prop-reprezentacja H-ogr}
Let  $\e(t,x,\cdot)$ be a function defined on $\R^{n+1}$ into $\R^{n+1}$.  Assume that $H(t,x,\cdot)$ is a real-valued convex function and $\e(t,x,\R^{n+1})=\E H^{\ast}(t,x,\cdot)$.\linebreak If $\e(t,x,a)=(f(t,x,a),l(t,x,a))$ for all $a\in \R^{n+1}\!\!$, then the triple $(A,f,l)$ is a representation of $H$. Moreover, $f(t,x,\R^{n+1})=\D H^{\ast}(t,x,\cdot)$.
\end{Prop}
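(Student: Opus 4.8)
The plan is to unwind the definition of a representation and reduce the claimed identity \eqref{hfl} to the Fenchel--Moreau biconjugation theorem. Fix $t\in[0,T]$, $x\in\R^n$, and abbreviate $\varphi:=H^{\ast}(t,x,\cdot)$ and $h:=H(t,x,\cdot)$; here the control set is $A=\R^{n+1}$. Since $h$ is a real-valued convex function on all of $\R^n$, it is continuous, hence proper and lower semicontinuous; therefore its conjugate $\varphi=h^{\ast}$ is a proper, lower semicontinuous, convex function, $\varphi^{\ast}=h$ by Fenchel--Moreau, and $\varphi$ never takes the value $-\infty$. In particular $\E\varphi\neq\emptyset$ and $\D\varphi=\{v\in\R^n\mid\varphi(v)<+\infty\}$.

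First I would rewrite the supremum appearing in \eqref{hfl}. Because $\e(t,x,a)=(f(t,x,a),l(t,x,a))$ and, by hypothesis, $\e(t,x,\R^{n+1})=\E H^{\ast}(t,x,\cdot)=\E\varphi$, as $a$ ranges over $\R^{n+1}$ the pair $(f(t,x,a),l(t,x,a))$ ranges over exactly the epigraph $\E\varphi\subset\R^n\times\R$. Hence, for every $p\in\R^n$,
\[
\sup_{a\in\R^{n+1}}\big\{\langle p,f(t,x,a)\rangle-l(t,x,a)\big\}
=\sup_{(v,\eta)\in\E\varphi}\big\{\langle p,v\rangle-\eta\big\}.
\]
Next, since $\E\varphi=\{(v,\eta)\mid v\in\D\varphi,\ \eta\geq\varphi(v)\}$ with $\varphi(v)$ finite on $\D\varphi$, the inner supremum over admissible $\eta$ of $-\eta$ equals $-\varphi(v)$ for each $v\in\D\varphi$; consequently the right-hand side equals $\sup_{v\in\D\varphi}\{\langle p,v\rangle-\varphi(v)\}=\sup_{v\in\R^n}\{\langle p,v\rangle-\varphi(v)\}=\varphi^{\ast}(p)=h(p)=H(t,x,p)$, where the second equality uses that $\langle p,v\rangle-\varphi(v)=-\infty$ for $v\notin\D\varphi$. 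This establishes \eqref{hfl}, so $(A,f,l)$ is a representation of $H$.

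For the additional claim $f(t,x,\R^{n+1})=\D H^{\ast}(t,x,\cdot)=\D\varphi$, note that $f(t,x,\R^{n+1})$ is the image of $\E\varphi$ under the projection $(v,\eta)\mapsto v$: if $(v,\eta)\in\E\varphi$ then $\varphi(v)\leq\eta<+\infty$, so $v\in\D\varphi$; conversely every $v\in\D\varphi$ satisfies $(v,\varphi(v))\in\E\varphi$, so $v$ lies in the image. Thus the projection of $\E\varphi$ onto the first $n$ coordinates is precisely $\D\varphi$, which gives the assertion.

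I do not expect a serious obstacle: the argument is bookkeeping with epigraphs, the only genuinely nontrivial input being Fenchel--Moreau, which applies precisely because $H(t,x,\cdot)$ is finite (hence continuous and proper) on all of $\R^n$. The one point requiring a word of care is the passage from the supremum over $\E\varphi$ to the supremum over $\D\varphi$, where one must use that $\varphi$ is proper so that $\varphi(v)$ is a genuine finite value on its domain and the supremum over $\eta$ is attained at $\eta=\varphi(v)$.
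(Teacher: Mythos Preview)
Your proof is correct: the reduction to Fenchel--Moreau via the identity
\[
\sup_{a\in\R^{n+1}}\{\langle p,f(t,x,a)\rangle-l(t,x,a)\}=\sup_{(v,\eta)\in\E\varphi}\{\langle p,v\rangle-\eta\}=\sup_{v\in\D\varphi}\{\langle p,v\rangle-\varphi(v)\}=\varphi^{\ast}(p)
\]
is exactly the right computation, and the projection argument for the effective domain is clean. The paper does not supply its own proof of this proposition; it is quoted directly from \cite[Prop.~5.7]{AM}, so there is no in-paper argument to compare against.
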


\begin{Rem}\label{dct-rem}
Let $\e:[0,T]\times\R^n\times\R^{n+1}\rightarrow\R^{n+1}$ be the function from Theorem \ref{do-th-parame-lo1}.\linebreak We define two functions $f:[0,T]\times\R^n\times\R^{n+1}\rightarrow\R^n$ and $l:[0,T]\times\R^n\times\R^{n+1}\rightarrow\R$ by 
\begin{equation*}
f(t,x,a):=\pi_1(\e(t,x,a))\;\;\;\tn{and}\;\;\;l(t,x,a):=\pi_2(\e(t,x,a)),
\end{equation*}
where  $\pi_1(v,\eta)=v$ and $\pi_2(v,\eta)=\eta$ for all $v\in\R^n$ and $\eta\in\R$. Then for all $t\in[0,T]$, $x\in\R^n$, $a\in\R^{n+1}$ the equality $\e(t,x,a)=(f(t,x,a),l(t,x,a))$ holds.
Therefore, for all $t\in[0,T]$, $x,y\in\R^n$, $a,b\in\R^{n+1}$ we obtain $|\,l(t,x,a)\,|\leq |\,\e(t,x,a)\,|$ and
\begin{eqnarray*}
|f(t,x,a)-f(t,y,b)| &\!\!\leq\!\! &  |\e(t,x,a)-\e(t,y,b)|,\\
|\:l(t,x,a)\:-\:l(t,y,b)\:| &\!\!\leq\!\! & |\e(t,x,a)-\e(t,y,b)|.\\
\end{eqnarray*}
\vspace*{-1.4cm}

\pagebreak
\noindent From the above inequalities it follows that  the  properties of the function $\e$ are inherited by functions $f,l$. It is not difficult to show that Theorem \ref{th-rprez-glo12} follows from Proposition \ref{prop-reprezentacja H-ogr}, Theorem \ref{do-th-parame-lo1} and Corollary \ref{wrow-wm} (E5).
\end{Rem}

\subsection{The optimality theorem}\label{thm-reduct-sect}
The proof of Theorem \ref{thm-reduct} is similar to the proof of \linebreak  \cite[Theorem 3.13]{AM}, so we omit it.  In this subsection we describe only the differences in these proofs due to the extra-property. Let $\mathcal{I}_{f}([t_0,T],\R^{2n+1})$ [resp. $\mathcal{M}_{f}([t_0,T],\R^{2n+1})$] denotes the set of all absolutely-integrable [resp. absolutely-measurable] pairs $(x,a)(\cdot)$ which satisfies $\dot{x}(t)=f(t,x(t),a(t))$ for a.e. $t\in[t_0,T]$. Analogously, as in \cite[Section 7]{AM}, we can show that  
 the functionals $\Gamma[\cdot]$ and $\Lambda[\cdot]$ are well-defined and
\begin{equation}\label{roz6-l2}
-\infty\,<\,\inf_{x(\cdot)\,\in\,\mathcal{A}([t_0,T],\R^n)}\Gamma[x(\cdot)],\quad
-\infty\,<\,\inf_{(x,\,a)(\cdot)\,\in\, \mathcal{M}_{f}([t_0,T],\R^{2n+1})}\Lambda[(x,a)(\cdot)].
\end{equation}

The differences in the proofs of Theorem \ref{thm-reduct} and \cite[Theorem 3.13]{AM}  are related to the following equalities:
\begin{eqnarray}
\inf_{x(\cdot)\,\in\,\mathcal{A}([t_0,T],\R^n)}\Gamma[x(\cdot)] &=& \inf_{(x,\,a)(\cdot)\,\in\, \mathcal{M}_{f}([t_0,T],\R^{2n+1})}\Lambda[(x,a)(\cdot)],\label{roz6-rthm1}\\[3mm]
\inf_{x(\cdot)\,\in\,\mathcal{A}([t_0,T],\R^n)}\Gamma[x(\cdot)] &=& \inf_{(x,\,a)(\cdot)\,\in\, \mathcal{I}_{f}([t_0,T],\R^{2n+1})}\Lambda[(x,a)(\cdot)].\label{roz6-rthm01}
\end{eqnarray}
Using  \cite[Lemma 4.1]{AM} we can show that  $\tn{LS}(\ref{roz6-rthm1})\leq\tn{RS}(\ref{roz6-rthm1})$; see \cite[Section 7]{AM}. The latter, together with $\mathcal{I}_{f}([t_0,T],\R^{2n+\!1})\!\subset\!\mathcal{M}_{f}([t_0,T],\R^{2n+\!1})$, implies that $\tn{LS}(\ref{roz6-rthm01})\!\leq\!\tn{RS}(\ref{roz6-rthm01})$.
The proofs of the opposite inequalities require an appropriate definition of control; see \cite[Section 7]{AM}. Using Filippov Theorem to define a measurable control; see Remark~\ref{extraproperty}, we can show that $\tn{LS}(\ref{roz6-rthm1})\geq\tn{RS}(\ref{roz6-rthm1})$. Whereas, using the extra-property to define an integrable control; see Remark \ref{extraproperty}, we can show that $\tn{LS}(\ref{roz6-rthm01})\geq\tn{RS}(\ref{roz6-rthm01})$. Therefore, without the extra-property we could show only the equality \eqref{roz6-rthm1}. Whereas, having the extra-property  we can prove both equalities  \eqref{roz6-rthm1} and \eqref{roz6-rthm01}.

From the equality \eqref{roz6-rthm01} and its proof it follows that if $\kr{x}(\cdot)$ is the optimal arc of \eqref{problemcwp} such that $\kr{x}(\cdot)\in\D\Gamma$, then $(\kr{x},\kr{a})(\cdot)$ is the optimal arc of \eqref{problemcop} with $\kr{a}(\cdot)=(\dot{\kr{x}}(\cdot),H^{\ast}(\cdot,\kr{x}(\cdot),\dot{\kr{x}}(\cdot)))$ such that $(\kr{x},\kr{a})(\cdot)\in\D\Lambda$;  conversely, if $(\kr{x},\kr{a})(\cdot)$ is the optimal arc of \eqref{problemcop}, then $\kr{x}(\cdot)$ is the optimal arc of \eqref{problemcwp}. Using the latter with  \cite[Theorem 7.6]{AM} we can replace ``$\inf$'' by ``$\min$'' in the equalities \eqref{roz6-rthm1} and \eqref{roz6-rthm01}.

\subsection{The stability theorems} The proofs of Theorems \ref{thm-rep-stab2} and \ref{thm-rep-stab4} are  consequences of  \cite[Theorem 6.6 and Remark 6.7]{AM}, if we assume that $\omega_i\equiv 1$ for all $i\in\N\cup\{0\}$ and $H_0=H$. In  \cite{AM} one assumed that for all $i\in\N\cup\{0\}$  the function $\omega_i$ is given by
\begin{equation*}
\omega_i(t,x)=|\lambda_i(t,x)|+|H_i(t,x,0)|+c_i(t)(1+|x|)+1\;\;\tn{with}\;\;\;t\in[0,T],\;x\in\R^n,
\end{equation*}
where  $c_i$ is coefficient in (H4) and $\lambda_i$ is upper boundedness of $H_i^{\ast}$. In \cite[Theorem 6.6]{AM} convergence  $\omega_i$ to $\omega_0$ is required. For this reason, in \cite[Theorems 3.8 and 3.9]{AM} one assumes convergence $H_i$ to $H_0$ as well as convergence  $\lambda_i$ to $\lambda_0$ and  $c_i$ to  $c_0$. Since, in our case $\omega_i\equiv 1$, so Theorems \ref{thm-rep-stab2} and \ref{thm-rep-stab4} 
do not need convergence $c_i$ to $c_0$.

\pagebreak


\section{Regularities of value functions}\label{thm-reguvalufun}

\noindent   Given real numbers $\tau$ and $\nu$, we put $\tau\wedge\nu:=\min\{\,\tau,\nu\,\}$  and $\tau\vee\nu:=\max\{\,\tau,\nu\,\}$. Let $\emph{S}_f(t_0,x_0)$ denotes the set of all trajectory-control pairs $(x,a)(\cdot)$ of the control system

\begin{equation*}
\begin{array}{ll}
\dot{x}(t)=f(t,x(t),a(t))&\mathrm{a.e.}\;\;t\in[0,T],\\[0mm]
x(t_0)=x_0,&
\end{array}
\end{equation*}
where $(x,a)(\cdot)\in \mathcal{A}([0,T],\R^n)\times L^1([0,T],\R^{n+1})$ and $(t_0,x_0)\in[0,T]\times\R^n$.

\subsection{Upper semicontinuity of value functions}\label{thm-reguvalufun-2}
Assume that $H_i,H$, $i\in\N$, satisfies  \tn{(H1)-(H4)} and \tn{(HLC)}  with the same  integrable maps $c(\cdot)$, $k_R(\cdot)$. We consider the representations $(\R^{n+1}\!\!\!,f_i,l_i)$ and $(\R^{n+1}\!\!\!,f,l)$ of $H_i$ and $H$, respectively, defined as in Theorem~\ref{th-rprez-glo12}.  Assume that there exists integrable function $\mu(\cdot)$ such that $|H_i(t,0,0)|\vee|H(t,0,0)|\leq\mu(t)$ for all $t\in[0,T]$, $i\in\N$. Moreover, we assume that   $H_i(t,\cdot,\cdot)$ converge uniformly on compacts to $H(t,\cdot,\cdot)$ for all $t\in[0,T]$. Then, in view of Theorem \ref{thm-rep-stab4}, $f_i(t,\cdot,\cdot)$ converge to $f(t,\cdot,\cdot)$ and $l_i(t,\cdot,\cdot)$ converge to $l(t,\cdot,\cdot)$ uniformly on compacts in $\R^n\times\R^{n+1}$ for all $t\in[0,T]$.

\begin{Th}\label{d-upperscvfs-1}
Let  $(\R^{n+1}\!\!\!,f_i,l_i)$ and $(\R^{n+1}\!\!\!,f,l)$ be as above. Assume that $g_i$ and $g$ are\linebreak continuous functions and $g_i$  converge to $g$ uniformly on compacts in $\R^n$. Let $V_i$ and $V$ be the value functions associated with $(\R^{n+1}\!\!\!,f_i,l_i,g_i)$ and $(\R^{n+1}\!\!\!,f,l,g)$, respectively.  Then for every $(t_0,x_0)\in[0,T]\times\R^n$ we have
\begin{equation*}
\limsup\nolimits_{\,i\,\to\,\infty}V_i(t_{i0},x_{i0})\leq V(t_0,x_0)\;\;\;\it{for every sequence}\;\;\; (t_{i0},x_{i0})\rightarrow (t_0,x_0).
\end{equation*}
\end{Th}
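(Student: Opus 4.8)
The plan is to establish the upper bound by the usual route for control-type value functions: transfer a near-optimal control of the limit problem to the approximating control systems and pass to the limit, using the stability of the representation recalled above. Fix $(t_0,x_0)\in[0,T]\times\R^n$ and a sequence $(t_{i0},x_{i0})\to(t_0,x_0)$, and choose $M>0$ with $x_0,x_{i0}\in\B_M$ for all $i$. Given $\varepsilon>0$, I would pick $(\kr{x},\kr{a})(\cdot)\in\emph{S}_f(t_0,x_0)$ with $g(\kr{x}(T))+\int_{t_0}^Tl(t,\kr{x}(t),\kr{a}(t))\,dt\le V(t_0,x_0)+\varepsilon$ (in fact a minimizer exists, by Corollary~\ref{cor-reduct}), extending $\kr{a}$ by $0$ on $[0,t_0]$ and $\kr{x}$ accordingly exactly as in the proof of Theorem~\ref{d-lip-value-function-2}. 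Since, by (A1)--(A2), $f_i$ is locally Lipschitz and of sublinear growth in $x$, the Cauchy problem $\dot{x}_i(t)=f_i(t,x_i(t),\kr{a}(t))$ on $[0,T]$ with $x_i(t_{i0})=x_{i0}$ has a unique solution $x_i(\cdot)$, and $(x_i,\kr{a})(\cdot)\in\emph{S}_{f_i}(t_{i0},x_{i0})$, whence
\[
V_i(t_{i0},x_{i0})\;\le\; J_i\;:=\;g_i(x_i(T))+\int_{t_{i0}}^Tl_i(t,x_i(t),\kr{a}(t))\,dt .
\]
It then suffices to show $J_i\to g(\kr{x}(T))+\int_{t_0}^Tl(t,\kr{x}(t),\kr{a}(t))\,dt$, for this gives $\limsup_i V_i(t_{i0},x_{i0})\le V(t_0,x_0)+\varepsilon$ and $\varepsilon>0$ is arbitrary.

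The first step is to prove $x_i\to\kr{x}$ uniformly on $[0,T]$. Lemma~\ref{rofvflem2}, applied with the common data $c,k_R,\mu$, yields a radius $R$ with $\|x_i\|\vee\|\kr{x}\|\le R$ and the estimate $\|x_i-\kr{x}\|\le\big(|x_{i0}-x_0|+\|f_i[x_i]-f[x_i]\|_{L^1}+\int_{t_{i0}\wedge t_0}^{t_{i0}\vee t_0}\omega_R[t]\,dt\big)\exp\big(\int_0^T\omega_R[t]\,dt\big)$. Here $|x_{i0}-x_0|\to0$, the last integral $\to 0$ since $t_{i0}\to t_0$ and $\omega_R\in L^1$, and for the middle term I would fix $t$ and note that $\kr{a}(t)$ is a fixed point while $x_i(t)\in\B_R$, so by the convergence of $f_i(t,\cdot,\cdot)$ to $f(t,\cdot,\cdot)$ uniformly on the compact set $\B_R\times\{\kr{a}(t)\}$ (Theorem~\ref{thm-rep-stab4}) the integrand $|f_i(t,x_i(t),\kr{a}(t))-f(t,x_i(t),\kr{a}(t))|$ tends to $0$; since it is dominated by $2c(t)(1+R)\in L^1$ by (A2), dominated convergence gives $\|f_i[x_i]-f[x_i]\|_{L^1}\to0$. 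Hence $\|x_i-\kr{x}\|\to0$; in particular $x_i(T)\to\kr{x}(T)$, all terminal points lying in $\B_R$.

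The second step handles $J_i$. First $g_i(x_i(T))\to g(\kr{x}(T))$ because $|g_i(x_i(T))-g(\kr{x}(T))|\le\sup_{\B_R}|g_i-g|+|g(x_i(T))-g(\kr{x}(T))|$, and both summands vanish by uniform convergence $g_i\to g$ on $\B_R$ and continuity of $g$. For the running cost I would use the third estimate of Lemma~\ref{rofvflem2}, $\big|\int_{t_{i0}}^Tl_i(t,x_i(t),\kr{a}(t))\,dt-\int_{t_0}^Tl(t,\kr{x}(t),\kr{a}(t))\,dt\big|\le\|l_i[x_i]-l[x]\|_{L^1}+\int_{t_{i0}\wedge t_0}^{t_{i0}\vee t_0}\omega_R[t]\,dt+3\int_{t_{i0}\wedge t_0}^{t_{i0}\vee t_0}|\kr{a}(t)|\,dt$, where the last two terms $\to0$ because $\omega_R,|\kr{a}|\in L^1$. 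For $\|l_i[x_i]-l[x]\|_{L^1}$ I would split the integrand as $|l_i(t,x_i(t),\kr{a}(t))-l(t,x_i(t),\kr{a}(t))|+|l(t,x_i(t),\kr{a}(t))-l(t,\kr{x}(t),\kr{a}(t))|$: the first summand $\to0$ pointwise in $t$ by the same compactness argument (Theorem~\ref{thm-rep-stab4}) and is dominated by an $L^1$ function assembled from the two-sided bound in (A2) together with (HLC) and $|H_i(t,0,0)|\le\mu(t)$; the second summand is $\le10(n+1)k_R(t)\,\|x_i-\kr{x}\|$ by (A1), hence $\to0$ and dominated by $20(n+1)R\,k_R(t)\in L^1$. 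Dominated convergence then gives $J_i\to g(\kr{x}(T))+\int_{t_0}^Tl(t,\kr{x}(t),\kr{a}(t))\,dt$, finishing the proof.

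The step I expect to be the crux is passing to the limit inside $\int l_i$ (and, to a lesser degree, inside $\int f_i$): because the near-optimal control $\kr{a}(\cdot)$ is only integrable and not bounded, one cannot use uniform-on-compacts convergence over one fixed compact set, but must combine pointwise-in-$t$ convergence --- valid precisely because $x_i(t)$ stays in the fixed ball $\B_R$ while $\kr{a}(t)$ is a fixed vector for each $t$ --- with an $L^1$-dominating majorant built from the structural bounds (A1)--(A2) of the representation and the common integrable data $c,k_R,\mu$. This is exactly where the integrability of the controls and the sublinear-growth and Lipschitz properties of the faithful representation are essential.
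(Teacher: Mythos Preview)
Your proposal is correct and follows essentially the same route as the paper: pick the optimal pair $(\kr{x},\kr{a})$ for the limit problem via Corollary~\ref{cor-reduct}, extend it to $[0,T]$, transport $\kr{a}$ to the approximating systems to build $x_i$, and then use Lemma~\ref{rofvflem2} together with dominated convergence (bounds from (A1)--(A2), (HLC), and $|H_i(t,0,0)|\le\mu(t)$) to get $\|f_i[x_i]-f[x_i]\|_{L^1}\to0$, hence $x_i\to\kr{x}$ uniformly, and then $\|l_i[x_i]-l[\kr{x}]\|_{L^1}\to0$ and $g_i(x_i(T))\to g(\kr{x}(T))$. The only cosmetic difference is that the paper argues the pointwise convergence $l_i(t,x_i(t),\kr{a}(t))\to l(t,\kr{x}(t),\kr{a}(t))$ in one stroke (uniform convergence of $l_i(t,\cdot,\kr{a}(t))$ on $\B_R$ plus $x_i(t)\to\kr{x}(t)$) rather than via your two-term split, but the substance is the same.
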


\begin{proof}
Let us fix $t_{i0},t_0\in[0,T]$, $x_{i0},x_0\in\R^n$ such that $(t_{i0},x_{i0})\rightarrow (t_0,x_0)$. Then there exists $M>0$ such that $x_{i0},x_0\in\B_M$. Without loss of generality we may assume $V(t_0,x_0)<+\infty$ since otherwise there is nothing to prove. Then by Corollary \ref{cor-reduct} there exists the optimal arc $(\kr{x},\kr{a})(\cdot)$ of $V(t_0,x_0)$ defined on $[t_0,T]$. We extend $\kr{a}(\cdot)$ from $[t_0,T]$ to $[0,T]$ by setting $\kr{a}(t)=0$ for $t\in[0,t_0]$. Next, because of the sublinear growth of $f$, we extend  $\kr{x}(\cdot)$ from $[t_0,T]$ to $[0,T]$ such that $(\kr{x},\kr{a})(\cdot)\in\emph{S}_f(t_0,x_0)$. Now we choose $x_i(\cdot)$ defined on $[0,T]$ such that $(x_i,\kr{a})(\cdot)\in\emph{S}_{f_i}(t_{i0},x_{i0})$. Then, our assumptions and Gronwall’s Lemma implies that
\begin{align}
&\|x_i\|\vee\|\kr{x}\|\;\leq\;\Big(M+\int\limits_{\scriptscriptstyle 0}^{\scriptscriptstyle T}c(t)\,dt\Big)\,\exp\Big(\int\limits_{\scriptscriptstyle 0}^{\scriptscriptstyle T}c(t)\,dt\Big)=:R, \label{rofvflem1}\\[-2mm]
&\|x_i-\kr{x}\|\;\leq\; \Big(|x_{i0}-x_0|+\|\,f_i[x_i]-f[x_i]\,\|_{L^1}+\!\!\int\limits_{\scriptscriptstyle t_{i0}\wedge t_0}^{\scriptscriptstyle t_{i0}\vee t_0}\omega_{\scriptscriptstyle R}[t]\,dt\Big)\,\exp\Big(\int\limits_{\scriptscriptstyle 0}^{\scriptscriptstyle T}\omega_{\scriptscriptstyle R}[t]\,dt\Big),\label{rofvflem2}
\end{align}
where $\,\omega_{\scriptscriptstyle R}[\cdot]:=2\,\mu(\cdot)+(10\,(n+1)\,k_R(\cdot)+2c(\cdot))(1+R)$, and $\,f_{i}[x_i](\cdot):=f_i(\cdot,x_i(\cdot),\kr{a}(\cdot))\,$,\linebreak $\,f[x_i](\cdot)\!:=\!f(\cdot,x_i(\cdot),\kr{a}(\cdot))\,$, and $\|\cdot\|_{L^1}$ is the standard norm in $L^1([0,\!T],\R^{m})$. We notice that
\begin{eqnarray}
\|\,f_i[x_i]-f[x_i]\,\|_{L^1} &=& \int_0^T|\,f_i(t,x_i(t),\kr{a}(t))-f(t,x_i(t),\kr{a}(t))\,|\, dt\nonumber\\
&\leq & \int_0^T\sup\nolimits_{z\in\B_R}|\,f_i(t,z,\kr{a}(t))-f(t,z,\kr{a}(t))\,|\, dt.\label{dn-upperscvfs-2}
\end{eqnarray}
Let $\Psi_i(\cdot):=\sup\nolimits_{z\in\B_R}|\,f_i(\cdot,z,\kr{a}(\cdot))-f(\cdot,z,\kr{a}(\cdot))\,|$. By Theorem~\ref{th-rprez-glo12} (A2) we get $2c(t)(1+R)\geq \Psi_i(t)$  for all $t\in[0,T]$. Since $f_i(t,\cdot,\kr{a}(t))$ converge to $f(t,\cdot,\kr{a}(t))$ uniformly on compacts in $\R^n$   for all $t\in[0,T]$, we have $\lim_{i\to\infty}\Psi_i(t)=0$ for all $t\in[0,T]$. Therefore by \eqref{dn-upperscvfs-2}
\begin{equation}\label{dn-upperscvfs-3}
\lim\nolimits_{i\to\infty}\|\,f_i[x_i]-f[x_i]\,\|_{L^1}=0.
\end{equation}
Observe that \eqref{rofvflem2}, together with \eqref{dn-upperscvfs-3}, implies $\lim_{i\to\infty}\|x_i-\kr{x}\|=0$. Since $l_i(t,\cdot,\kr{a}(t))$ and $l(t,\cdot,\kr{a}(t))$ are continuous, $l_i(t,\cdot,\kr{a}(t))$ converge to $l(t,\cdot,\kr{a}(t))$ uniformly on compacts in $\R^n$ and $x_i(t)\to \kr{x}(t)$ for all $t\in[0,T]$, we have $l_i(t,x_i(t),\kr{a}(t))\to l(t,\kr{x}(t),\kr{a}(t))$ for all $t\in[0,T]$. By Theorem~\ref{th-rprez-glo12} (A2) we get $|l_i(t,x_i(t),\kr{a}(t))|\leq\omega_R[t]+3|\kr{a}(t)|$ for all $t\in[0,T]$. Therefore
\begin{equation}\label{dn-upperscvfs-4}
\lim\nolimits_{i\to\infty}\|\,l_i[x_i]-l[\kr{x}]\,\|_{L^1}=0.
\end{equation}
Again by our assumptions and Gronwall’s Lemma  we obtain
\begin{eqnarray}
V(t_0,x_0) &=& g(\kr{x}(T))+\int\limits_{\scriptscriptstyle t_0}^{\scriptscriptstyle T}l[\kr{x}](t)\,dt\nonumber\\
&\geq & g(\kr{x}(T))+
\int\limits_{\scriptscriptstyle t_{i0}}^{\scriptscriptstyle T}l_{i}[x_i](t)\,dt-\|\,l_i[x_i]-l[\kr{x}]\,\|_{L^1}- \int\limits_{\scriptscriptstyle t_{i0}\wedge t_0}^{\scriptscriptstyle t_{i0}\vee t_0}\omega_{\scriptscriptstyle R}[t]\,dt-3\!\!\!\int\limits_{\scriptscriptstyle t_{i0}\wedge t_0}^{\scriptscriptstyle t_{i0}\vee t_0}|\,\kr{a}(t)\,|\,dt\nonumber\\
&\geq & g(\kr{x}(T))\!-\!g_i(x_i(T))+
V_i(t_{i0},x_{i0})-\|\,l_i[x_i]-l[\kr{x}]\,\|_{L^1}-\!\!\!\int\limits_{\scriptscriptstyle t_{i0}\wedge t_0}^{\scriptscriptstyle t_{i0}\vee t_0}\!\!\!\!\omega_{\scriptscriptstyle R}[t]\,dt-3\!\!\!\int\limits_{\scriptscriptstyle t_{i0}\wedge t_0}^{\scriptscriptstyle t_{i0}\vee t_0}\!\!|\,\kr{a}(t)\,|\,dt,\label{dn-upperscvfs-5}
\end{eqnarray}
where $\,l_{i}[x_i](\cdot):=l_i(\cdot,x_i(\cdot),\kr{a}(\cdot))\,$ and $\,l[\kr{x}](\cdot):=l(\cdot,\kr{x}(\cdot),\kr{a}(\cdot))$. Since $g_i$ and $g$ are continuous functions, $g_i$  converge to $g$ uniformly on compacts in $\R^n$, and $x_i(T)\to \kr{x}(T)$, we see that $g_i(x_i(T))\to g(\kr{x}(T))$. The latter, together with \eqref{dn-upperscvfs-4} and \eqref{dn-upperscvfs-5}, imply the following inequality $\limsup_{\,i\,\to\,\infty}V_i(t_{i0},x_{i0})\leq V(t_0,x_0)$.
\end{proof}

\begin{Th}\label{d-upperscvfs-2}
Let  $(\R^{n+1}\!\!\!,f_i,l_i)$ and $(\R^{n+1}\!\!\!,f,l)$ be as above. Assume that $g_i$ and $g$ are proper, lower semicontinuous  and \tn{e-$\lim_{i\to\infty}g_i=g$}. Let $V_i$ and $V$ be the value functions associated with $(\R^{n+1}\!\!\!,f_i,l_i,g_i)$ and $(\R^{n+1}\!\!\!,f,l,g)$, respectively.  Then for every $(t_0,x_0)\in[0,T]\times\R^n$ 
\begin{equation*}
\limsup\nolimits_{\,i\,\to\,\infty}V_i(t_{i0},x_{i0})\leq V(t_0,x_0)\;\;\;\it{for some sequence}\;\;\; (t_{i0},x_{i0})\rightarrow (t_0,x_0).
\end{equation*}
\end{Th}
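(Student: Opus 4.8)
The plan is to adapt the proof of Theorem~\ref{d-upperscvfs-1}, exploiting the freedom in the choice of the recovery sequence $(t_{i0},x_{i0})$. Since only epi-convergence of $g_i$ to $g$ is available, an arbitrary terminal point of a trajectory cannot be forced to a place where the $g_i$ behave well; instead I would keep the terminal point fixed at the recovery point furnished by \tn{e-$\lim_{i\to\infty}g_i=g$}, and let the initial point float by running the $f_i$-dynamics \emph{backward} in time. Concretely, fix $(t_0,x_0)$ and assume $V(t_0,x_0)<+\infty$, since otherwise there is nothing to prove. By Corollary~\ref{cor-reduct} there is an optimal arc $(\kr{x},\kr{a})(\cdot)$ of \eqref{problemop} on $[t_0,T]$ with $(\kr{x},\kr{a})(\cdot)\in\D\Omega$, so that $V(t_0,x_0)=g(\kr{x}(T))+\int_{t_0}^Tl(t,\kr{x}(t),\kr{a}(t))\,dt$; since the integral term is bounded below by an integrable function (by (A2) and (HLC)), $g(\kr{x}(T))<+\infty$. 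Applying the definition of \tn{e-$\lim_{i\to\infty}g_i=g$} at the point $\kr{x}(T)$, I would pick $z_i\to\kr{x}(T)$ with $\limsup_{i\to\infty}g_i(z_i)\le g(\kr{x}(T))$.

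Next, for each $i$ I would solve the differential equation $\dot{x}_i(t)=f_i(t,x_i(t),\kr{a}(t))$ on $[t_0,T]$ backward from the terminal value $x_i(T)=z_i$; global existence on $[t_0,T]$ and an $i$-independent bound $\|x_i\|\le R$ follow from the sublinear growth $|f_i(t,x,\kr{a}(t))|\le c(t)(1+|x|)$ in (A2), with $c(\cdot)$ common to all $i$, together with Gronwall's lemma. Put $t_{i0}:=t_0$ and $x_{i0}:=x_i(t_0)$, so that $(x_i,\kr{a})(\cdot)\in\emph{S}_{f_i}(t_{i0},x_{i0})$. Comparing $x_i$ with $\kr{x}$ and running a Gronwall estimate from $t=T$ downward --- bounding $|f_i(t,x_i(t),\kr{a}(t))-f(t,\kr{x}(t),\kr{a}(t))|$ by $\sup_{z\in\B_R}|f_i(t,z,\kr{a}(t))-f(t,z,\kr{a}(t))|+10(n+1)k_R(t)|x_i(t)-\kr{x}(t)|$ exactly as for \eqref{dn-upperscvfs-2}--\eqref{dn-upperscvfs-3}, and using (A1) together with the uniform-on-compacts convergence $f_i(t,\cdot,\cdot)\to f(t,\cdot,\cdot)$ valid under the standing hypotheses by Theorem~\ref{thm-rep-stab4} --- yields $\|x_i-\kr{x}\|\to 0$. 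In particular $x_{i0}\to x_0$, so $(t_{i0},x_{i0})\to(t_0,x_0)$ is an admissible recovery sequence.

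Finally, by the definition of the value function $V_i$,
\[
V_i(t_{i0},x_{i0})\,\le\, g_i(x_i(T))+\int_{t_0}^T l_i(t,x_i(t),\kr{a}(t))\,dt\,=\,g_i(z_i)+\int_{t_0}^T l_i(t,x_i(t),\kr{a}(t))\,dt .
\]
Since $l_i(t,\cdot,\cdot)\to l(t,\cdot,\cdot)$ uniformly on compacts (Theorem~\ref{thm-rep-stab4}), $x_i\to\kr{x}$ uniformly, and $|l_i(t,x_i(t),\kr{a}(t))|\le\omega_R[t]+3|\kr{a}(t)|\in L^1$ as in Lemma~\ref{rofvflem2}, dominated convergence gives $\int_{t_0}^T l_i(t,x_i(t),\kr{a}(t))\,dt\to\int_{t_0}^T l(t,\kr{x}(t),\kr{a}(t))\,dt$, just as for \eqref{dn-upperscvfs-4}. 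Combining this with $\limsup_{i\to\infty}g_i(z_i)\le g(\kr{x}(T))$ yields
\[
\limsup_{i\to\infty}V_i(t_{i0},x_{i0})\,\le\, g(\kr{x}(T))+\int_{t_0}^T l(t,\kr{x}(t),\kr{a}(t))\,dt\,=\,V(t_0,x_0),
\]
which is the assertion. The main obstacle is the backward continuous-dependence step: one must check that the solutions of $\dot{x}_i=f_i(\cdot,x_i,\kr{a}(\cdot))$, $x_i(T)=z_i$, exist on all of $[t_0,T]$, remain in a fixed ball, and converge uniformly to $\kr{x}$. This is exactly where it matters that the representations share the growth function $c(\cdot)$ and the local Lipschitz bounds of Theorem~\ref{th-rprez-glo12} (A1)--(A2), and that $f_i\to f$ uniformly on compacts; everything else is the same dominated-convergence bookkeeping already carried out for Theorem~\ref{d-upperscvfs-1}.
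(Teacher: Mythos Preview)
Your proposal is correct and follows essentially the same approach as the paper: both take an optimal pair $(\kr{x},\kr{a})$ for $V(t_0,x_0)$, use the recovery sequence $z_i\to\kr{x}(T)$ furnished by \tn{e-$\lim g_i=g$}, solve the $f_i$-dynamics with control $\kr{a}(\cdot)$ and terminal value $z_i$ at $T$, and then set $(t_{i0},x_{i0})=(t_0,x_i(t_0))$. The only cosmetic difference is that the paper first extends $\kr{a}$ by $0$ to $[0,T]$ so as to invoke the prepackaged estimate of Lemma~\ref{rofvflem2}, whereas you run the Gronwall argument directly on $[t_0,T]$; the content is the same.
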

\begin{proof}
Fix $(t_0,x_0)\in[0,T]\times\R^n$. Without loss of generality we may assume $V(t_0,x_0)<+\infty$ since otherwise there is nothing to prove. Then in view of Corollary \ref{cor-reduct} there exists the optimal arc $(\kr{x},\kr{a})(\cdot)$ of $V(t_0,x_0)$ defined on $[t_0,T]$. We extend $\kr{a}(\cdot)$ from $[t_0,T]$ to $[0,T]$ by setting $\kr{a}(t)=0$ for $t\in[0,t_0]$. Next, because of the sublinear growth of $f$, we extend  $\kr{x}(\cdot)$ from $[t_0,T]$ to $[0,T]$ such that $(\kr{x},\kr{a})(\cdot)\in\emph{S}_f(t_0,x_0)$. Because of \tn{e-$\lim_{i\to\infty}g_i=g$}, there exists a sequence $z_{i0}\to\kr{x}(T)$ such that $g_i(z_{i0})\to g(\kr{x}(T))$.
Now we choose $z_i(\cdot)$ defined on $[0,T]$ such that $(z_i,\kr{a})(\cdot)\in\emph{S}_{f_i}(T,z_{i0})$. Let $M>0$ be a constant such that $z_{i0},x_0,\kr{x}(T)\in\B_M$. Applying Gronwall’s Lemma  to  $(\kr{x},\kr{a})(\cdot)\in\emph{S}_f(T,\kr{x}(T))$ and $(z_i,\kr{a})(\cdot)\in\emph{S}_{f_i}(T,z_{i0})$, similarly as \eqref{rofvflem1} and \eqref{rofvflem2},  we obtain that $\|z_i\|\vee\|\kr{x}\|\leq R$ and
\begin{equation}\label{dn2-upperscvfs-1}
\|z_i-\kr{x}\|\;\leq\; \Big(|z_{i0}-\kr{x}(T)|+\|\,f_i[z_i]-f[z_i]\,\|_{L^1}\Big)\,\exp\Big(\int\limits_{\scriptscriptstyle 0}^{\scriptscriptstyle T}\omega_{\scriptscriptstyle R}[t]\,dt\Big).
\end{equation}
Similarly to  \eqref{dn-upperscvfs-3} we show that $\lim\nolimits_{i\to\infty}\|\,f_i[z_i]-f[z_i]\,\|_{L^1}=0$. The latter  and \eqref{dn2-upperscvfs-1}, together with $z_{i0}\to\kr{x}(T)$, imply that $\lim\nolimits_{i\to\infty}\|z_i-\kr{x}\|=0$. Hence we obtain $z_i(t_0)\to\kr{x}(t_0)=x_0$. Moreover,  similarly to   \eqref{dn-upperscvfs-4}, we can also show that $\lim\nolimits_{i\to\infty}\|\,l_i[z_i]-l[\kr{x}]\,\|_{L^1}=0$. Note that
\begin{eqnarray*}
V(t_0,x_0) &=& g(\kr{x}(T))+\int\limits_{\scriptscriptstyle t_0}^{\scriptscriptstyle T}l[\kr{x}](t)\,dt\\&\geq & g(\kr{x}(T))+\int\limits_{\scriptscriptstyle t_{0}}^{\scriptscriptstyle T}l_{i}[z_i](t)\,dt-\|\,l_i[z_i]-l[\kr{x}]\,\|_{L^1}\\[2mm]&\geq & g(\kr{x}(T))-g_i(z_{i0})+ V_i(t_{0},z_i(t_0))-\|\,l_i[z_i]-l[\kr{x}]\,\|_{L^1}.
\end{eqnarray*}
Passing to the limit in the above inequality, we get $\limsup\nolimits_{\,i\,\to\,\infty}V_i(t_0,z_i(t_0))\leq V(t_0,x_0)$, where $(t_0,z_i(t_0))\to(t_0,x_0)$.
\end{proof}

\begin{Rem}\label{rem-uscvvfs}
Let $(\R^{n+1}\!\!\!,f,l)$ be as above. Assume that $g$ is a continuous function. Let $V$ be the value function associated with $(\R^{n+1}\!\!\!,f,l,g)$. Applying Theorem \ref{d-upperscvfs-1} to $f_i:=f
\!\!$, $l_i:=l$, $g_i:=g$, we obtain that the value function $V$ is upper semicontinuous.
\end{Rem}

\subsection{Lower semicontinuity of value functions}\label{thm-reguvalufun-1}
Let $H,H_i,i\!\in\!\N$, satisfy \tn{(H1)-(H4)}\linebreak with the same  integrable maps $c(\cdot)$, $k_R(\cdot)$.   Assume that there exists integrable map $\mu_R(\cdot)$ such that $|H_i(t,x,0)|\vee|H(t,x,0)|\leq\mu_R(t)$ for all $(t,x)\in[0,T]\times\B_R$, $i\in\N$. We observe that by  the definition of conjugate we obtain $H_i^{\ast}(t,x,v)\wedge H^{\ast}(t,x,v)\geq-\mu_R(t)$ for all $t\in[0,T]$, $x\in\B_R$, $v\in\R^n$, $i\in\N$. Additionally, we assume that  $H_i(t,\cdot,\cdot)$ converge uniformly on compacts to $H(t,\cdot,\cdot)$ for all $t\in[0,T]$. This assumptions imply that the set-valued maps
\begin{align*}
Q(t,x)&:=\{\,(w,\eta)\in\R^n\times\R\mid (w,-\eta)\in\E H^{\ast}(t,x,\cdot)\,\}\\
Q_i(t,x)&:=\{\,(w,\eta)\in\R^n\times\R\mid (w,-\eta)\in\E H_i^{\ast}(t,x,\cdot)\,\}
\end{align*}
for all $t\in[0,T]$, $x\in\R^n$ have the following property (cf. Cesari \cite[Sections 8.5 and 10.5]{LC}) 
\begin{align}
& Q(t,x)=\bigcap_{\varepsilon\,>\,0}\,\overline{\mathrm{conv}}\,Q(t,x;\varepsilon), \; \tn{where}\label{wqcesari}\\
& Q(t,x;\varepsilon):=\bigcup_{1/i\;<\;\varepsilon,\;\;|x-y|\;<\;\varepsilon}Q_i(t,y).\nonumber
\end{align}

\begin{Lem}\label{wqclem}
Let  $H_i,H,Q_i,Q$, $i\in\N$ be as above. Assume that $(t_{i0},x_{i0},u_{i0})\to(t_0,x_0,u_0)$, $t_{i0},t_0\in[0,T)$. We consider a sequence of functions $(x_i,u_i)(\cdot)\in \mathcal{A}([t_{i0},T],\R^{n}\times\R)$ such that
\begin{equation}\label{wqclem-1}
(\dot{x}_i,\dot{u}_i)(t)\in Q_i(t,x_i(t))\;\;\it{a.e.}\;\;t\in[t_{i0},T],\;\;\;(x_i,u_i)(t_{i0})=(x_{i0},u_{i0}).
\end{equation}
Assume that $u_i(T)\geq M$ for every $i\in\N$ and some constant M. Then there exist   a function $(x,v)(\cdot)\in \mathcal{A}([t_{0},T],\R^{n}\times\R)$ and a real number $v_0\leq u_0$ such that
\begin{equation}\label{wqclem-2}
(\dot{x},\dot{v})(t)\in Q(t,x(t))\;\;\it{a.e.}\;\;t\in[t_{0},T],\;\;\;(x,v)(t_{0})=(x_{0},v_{0}).
\end{equation}
Moreover, there exist a subsequence $(x_{i_k},u_{i_k})(\cdot)$ of a sequence $(x_{i},u_{i})(\cdot)$ such that 
\begin{equation}\label{wqclem-3}
\lim\nolimits_{k\to\infty}x_{i_k}(T)=x(T)\;\quad\it{and}\quad\; \lim\nolimits_{k\to\infty}u_{i_k}(T)\leq v(T).
\end{equation}
\end{Lem}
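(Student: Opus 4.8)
The plan is to obtain $(x,v)(\cdot)$ as the limit of a subsequence of the $(x_i,u_i)(\cdot)$ and to read off \eqref{wqclem-2} from a closure theorem of Cesari type, whose structural hypothesis is precisely property \eqref{wqcesari}. First come the a~priori bounds. If $(w,\eta)\in Q_i(t,x)$ then $H_i^{\ast}(t,x,w)\le-\eta<+\infty$, so \eqref{wqclem-1} forces $\dot{x}_i(t)\in\D H_i^{\ast}(t,x_i(t),\cdot)$ for a.e.\ $t$, and Corollary~\ref{wrow-wm}~(E5) applied to $H_i$ gives $|\dot{x}_i(t)|\le c(t)(1+|x_i(t)|)$ a.e. Enlarging $M$ so that every $x_{i0}\in\B_M$, Gronwall's Lemma yields a radius $R$, independent of $i$, with $\|x_i\|\le R$, whence $|\dot{x}_i(t)|\le c(t)(1+R)$ a.e. Thus $\{x_i\}$ is uniformly bounded and equi-absolutely continuous; after passing to a subsequence we may assume $t_{i0}\to t_0$, extend $(x_i,u_i)$ constantly to the left of $t_{i0}$ when $t_{i0}>t_0$ (which affects only an interval of vanishing length, hence neither the a.e.\ inclusion on $[t_0,T]$ nor the limit), and apply the Arzel\`a--Ascoli theorem to get $x_{i_k}\to x$ uniformly on $[t_0,T]$ with $x\in\mathcal{A}([t_0,T],\R^n)$, $x(t_0)=x_0$; in particular $x_{i_k}(T)\to x(T)$.

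For the cost components, the inclusion $(w,\eta)\in Q_i(t,x)$ also gives $-\dot{u}_i(t)\ge H_i^{\ast}(t,x_i(t),\dot{x}_i(t))\ge-\mu_R(t)$, using $H_i^{\ast}\ge-\mu_R$ on $\B_R$ as recorded just before the lemma; hence $\dot{u}_i(t)\le\mu_R(t)$ a.e. Combined with $u_i(T)\ge M$ this keeps $u_i(t_0)$ bounded with $\limsup_i u_i(t_0)\le u_0$, and makes the functions $\phi_i(t):=\int_{t_0}^t\mu_R(s)\,ds-(u_i(t)-u_i(t_0))$ nondecreasing on $[t_0,T]$, vanishing at $t_0$, and uniformly bounded (the latter because $u_i(T)\ge M$). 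By Helly's selection theorem and a further extraction, $\phi_{i_k}\to\phi$ pointwise with $\phi$ bounded and nondecreasing, $\phi_{i_k}(T)\to\phi(T)$, and $u_{i_k}(t_0)\to\tilde{u}_0\le u_0$; write $\phi=\phi_{\mathrm{ac}}+\phi_{\mathrm{s}}$ with $\phi_{\mathrm{s}}$ nonnegative, nondecreasing and $\phi_{\mathrm{s}}'=0$ a.e. Put $v_0:=\tilde{u}_0$ and $v(t):=v_0+\int_{t_0}^t(\mu_R(s)-\phi_{\mathrm{ac}}'(s))\,ds$; then $v\in\mathcal{A}([t_0,T],\R)$, $v(t_0)=v_0\le u_0$, and $u_{i_k}(T)=u_{i_k}(t_0)+\int_{t_0}^T\mu_R-\phi_{i_k}(T)\to v(T)-\phi_{\mathrm{s}}(T)$, so $\lim_k u_{i_k}(T)\le v(T)$.

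It remains to show $(\dot{x}(t),\dot{v}(t))\in Q(t,x(t))$ for a.e.\ $t\in[t_0,T]$, which is the core of the lemma and is an instance of Cesari's closure theorem, cf.\ \cite[Sec.~8.5 and 10.5]{LC}: the bounds of the first paragraph furnish the growth and compactness conditions on the orientor fields $Q_i(t,\cdot)$, while \eqref{wqcesari} is exactly the upper-closure property (Cesari's property (Q)) under which the limiting inclusion passes. The mechanism is the usual averaging argument: at a point $t\in(t_0,T)$ where $h^{-1}(x(t+h)-x(t))\to\dot{x}(t)$, $h^{-1}(v(t+h)-v(t))\to\dot{v}(t)$ and $h^{-1}(\phi_{\mathrm{s}}(t+h)-\phi_{\mathrm{s}}(t))\to0$ (so for a.e.\ $t$), the averaged increments $h^{-1}(x_{i_k}(t+h)-x_{i_k}(t),\,u_{i_k}(t+h)-u_{i_k}(t))$ lie, for $k$ large, in the closed set $\overline{\mathrm{conv}}\,Q(t,x(t);\varepsilon)$ as soon as $1/i_k<\varepsilon$ and $h,\|x_{i_k}-x\|$ are small enough that $x_{i_k}(s)\in\B(x(t),\varepsilon)$ on $[t,t+h]$; letting first $k\to\infty$ (so $u_{i_k}(t+h)-u_{i_k}(t)\to\int_t^{t+h}\mu_R-(\phi(t+h)-\phi(t))=(v(t+h)-v(t))-(\phi_{\mathrm{s}}(t+h)-\phi_{\mathrm{s}}(t))$) and then $h\to0$ gives $(\dot{x}(t),\dot{v}(t))\in\overline{\mathrm{conv}}\,Q(t,x(t);\varepsilon)$, and finally $\varepsilon\to0$ with \eqref{wqcesari} yields $(\dot{x}(t),\dot{v}(t))\in Q(t,x(t))$. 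The main obstacle is making this last step rigorous: since the orientor fields are only measurable in $t$, comparing $Q_{i_k}(s,x_{i_k}(s))$ with $Q(t,x(t);\varepsilon)$ for $s$ near $t$ forces one to pass through the Scorza--Dragoni theorem, exactly as in the proof of Cesari's closure theorem; the rest reduces to Gronwall's Lemma, Arzel\`a--Ascoli and Helly's theorem.
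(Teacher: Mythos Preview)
Your a~priori bounds, the Arzel\`a--Ascoli extraction for the $x_i$, and the Helly step for the monotone functions $\phi_i$ are all fine; your construction of $v$ via the absolutely continuous part of the Helly limit correctly yields $v(t_0)=v_0\le u_0$ and $\lim_k u_{i_k}(T)\le v(T)$. The divergence from the paper is in how you try to establish the inclusion $(\dot x(t),\dot v(t))\in Q(t,x(t))$: you use difference quotients (averaging over $[t,t+h]$), whereas the paper works pointwise in $t$ through Mazur's theorem.

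There is a real gap in your averaging argument. The set $Q(t,x(t);\varepsilon)$ in \eqref{wqcesari} is a union over $i$ and $y$ with the \emph{time variable fixed at $t$}. The inclusion you have available is $(\dot x_{i_k}(s),\dot u_{i_k}(s))\in Q_{i_k}(s,x_{i_k}(s))$ for $s\in[t,t+h]$, so the averaged increment lies a~priori only in the closed convex hull of $\bigcup_{s\in[t,t+h]}Q(s,x(t);\varepsilon)$, not in $\overline{\mathrm{conv}}\,Q(t,x(t);\varepsilon)$. Your appeal to Scorza--Dragoni does not close this: Scorza--Dragoni gives continuity in $t$ of \emph{one} Carath\'eodory map on a large closed set, but here you would need a \emph{uniform-in-$i$} statement for the whole sequence $Q_i$, and nothing in the hypotheses (only pointwise-in-$t$ convergence $H_i(t,\cdot,\cdot)\to H(t,\cdot,\cdot)$) supplies that. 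Cesari's closure theorem in \cite{LC} as written is for a single orientor field, so citing it does not cover the present situation either.

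The paper sidesteps the $t$-comparison entirely. From weak $L^1$-convergence $\dot x_i\rightharpoonup\dot x$ it uses Mazur's theorem to produce, for each tail index $s$, convex combinations $z_i^s(\cdot)$ of the $\dot x_{s+k}(\cdot)$ converging to $\dot x(\cdot)$ a.e.; the same coefficients are applied to $\eta_{s+k}(t):=H_{s+k}^{\ast}(t,x_{s+k}(t),\dot x_{s+k}(t))$. Because $(\dot x_{s+k}(t),-\eta_{s+k}(t))\in Q_{s+k}(t,x_{s+k}(t))\subset Q(t,x(t);\varepsilon)$ \emph{at the same $t$} once $s$ is large, the convex combinations and their iterated $\liminf$'s stay in $\overline{\mathrm{conv}}\,Q(t,x(t);\varepsilon)$, and \eqref{wqcesari} finishes. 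The cost component is then $v(\tau)=u(t_0)-\int_{t_0}^{\tau}\eta$ with $\eta$ the iterated $\liminf$, and Fatou's lemma gives $v\ge u$. The missing idea in your proof is precisely this Mazur step, which trades weak for a.e.\ convergence at the price of passing to convex combinations and thereby keeps the entire argument at a single time instant.
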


\begin{proof}
Let $(t_{i0},x_{i0},u_{i0})\to(t_0,x_0,u_0)$. We consider a sequence of functions $(x_i,u_i)(\cdot)\in \mathcal{A}([t_{i0},T],\R^{n}\times\R)$ which satisfy \eqref{wqclem-1}, i.e.  $(x_i,u_i)(t_{i0})=(x_{i0},u_{i0})$ and
\begin{equation}\label{mfdszca-1}
-\dot{u}_i(t)\;\geq\; H_i^{\ast}(t,x_i(t),\dot{x}_i(t))\;\;\;\;\tn{a.e.}\;\;\;t\in[t_{i0},T].
\end{equation}
By our assumptions we can find a sequence $t_i\in[t_{i0},T]$ such that $t_i\to t_0^+$.

Therefore, $H_i^{\ast}(t,\kr{x}_i(t),\dot{\kr{x}}_i(t))<+\infty$ for a.e. $t\in[t_{i0},T]$ and all $i\in\N$. The latter, together with (C5), implies that $|\dot{x}_i(t)|\leq c(t)(1+|x_i(t)|)$ for a.e. $t\in[t_{i0},T]$ and all  $i\in\N$. Therefore, because of Gronwall’s Lemma, for every $i\in\N$,
\begin{equation*}
\|x_i(\cdot)\|\leq\left(\sup\nolimits_{i\in\N}\,|x_{i0}|\,+\int_0^Tc(t)\,dt\right)\,\exp\left(\int_0^Tc(t)\,dt\right)=:R.
\end{equation*} 
Hence $|\dot{x}_i(t)|\leq(1+R)\,c(t)$ for a.e. $t\in[t_{i0},T]$ and all $i\in\N$.  We observe that 
\begin{equation*}
|x_i(t_{i})-x_i(t_{i0})|\;\leq\;\int_{t_{i0}}^{t_i}|\dot{x}_i(t)|\;\leq\; (1+R)\int_{t_{i0}}^{t_i}c(t)\,dt\;\to\;0.
\end{equation*}
The latter, together with $x_i(t_{i0})=x_{i0}\to x_0$, implies that $x_i(t_i)\to x_0$. Now we extend $x_i(\cdot)$ from $[t_i,T]$ to $[t_0,T]$ setting $x_i(t)=x_i(t_i)$ for $t\in[t_0,t_i)$. We notice that the family $\{x_i(\cdot)\}_{i\in\N}$ of  such extended functions is equi-bounded. Moreover, the family $\{\dot{x}_i(\cdot)\}_{i\in\N}$ of derivatives is equi-absolutely integrable. Therefore, in view of Arzel\`a-Ascoli and Dunford-Pettis Theorems, there exists a subsequence (we do not relabel) such that $x_i(\cdot)$ converges uniformely to an absolutely continuous function $x:[t_0,T]\to\R^n$  and $\dot{x}_i(\cdot)$ converges weakly in $L^1([t_0,T],\R^n)$ to $\dot{x}(\cdot)$. We observe that $x(t_0)=x_0$.
 
 By our assumptions we conclude that $H_i^{\ast}(t,x_i(t),\dot{x}_i(t))\geq -\mu_R(t)$ for a.e. $t\in[t_{i0},T]$ and every $i\in\N$. The latter, together with \eqref{mfdszca-1}, implies that $\dot{u}_i(t)\leq\mu_R(t)$ for a.e. $t\in[t_{i0},T]$ and every $i\in\N$. Moreover, we know that $u_i(T)\geq M$ for every $i\in\N$. Therefore,
\begin{align*}
u_i(t) &\;=\;u_i(T)-\int_t^T\dot{u}_i(s)\,ds\;\geq\; M-\int_0^T\mu_R(s)\,ds,\\
u_i(t) &\;=\;u_i(t_{i0})+\int_{t_{i0}}^t\dot{u}_i(s)\,ds\;\leq\;\sup\nolimits_{i\in\N}|u_{i0}|+\int_0^T\mu_R(s)\,ds,
\end{align*}
for all $t\in[t_{i0},T]$ and $i\in\N$. Since  $\{u_i(t_i)\}_{i\in\N}$ is bounded, we conclude that there exists a subsequence (we do not relabel) such that $u_i(t_i)\to v_0$. We observe that for all $i\in\N$
$$u_i(t_i)\;=\;u_i(t_{i0})+\int_{t_{i0}}^{t_i}\dot{u}_i(t)\,dt\leq u_{i0}+\int_{t_{i0}}^{t_i}\mu_R(t)\,dt.$$
The latter, together with $u_{i0}\to u_0$, implies that $v_0\leq u_0$.
Furthermore, we observe that
\begin{eqnarray*}\label{mzfw-Lem1-n2}
\mathrm{Var}_{[t_{i0},T]}u_i(\cdot) & \leq & \int_{t_{i0}}^T|\dot{u}_i(t)|\,dt\;\;=\;\; \int_{t_{i0}}^T2\,(\,\dot{u}_i(t)\vee 0\,)\,dt-\int_{t_{i0}}^T\dot{u}_i(t)\,dt\\
& \leq & \int_0^T2\;\mu_R(t)\,dt\;+\; u_i(t_{i0})\;-\;u_i(T)\\
&\leq & \int_0^T2\;\mu_R(t)\,dt\;+\;\sup\nolimits_{i\in\N}|u_{i0}|\;+\;|M|.
\end{eqnarray*}
Now we extend $u_i(\cdot)$ from $[t_i,T]$ to $[t_0,T]$ setting $u_i(t)=u_i(t_i)$ for $t\in[t_0,t_i)$. We notice that the family $\{u_i(\cdot)\}_{i\in\N}$ of  such extended functions is equi-bounded. Moreover, the variations of functions $u_i(\cdot)$ on $[t_0,T]$ are equi-bounded. Therefore, in view of Helly Theorem, there exists a subsequence (we do not relabel) such that $u_i(\cdot)$ converges pointwise (everywhere) to a bounded variation function $u:[t_0,T]\to\R$.

Since $\dot{x}_{s+k}(\cdot)\rightharpoonup\dot{x}(\cdot)$ as $k\to\infty$ in $L^1([t_0,T],\R^n)$ for all $s\in\N$, by the Mazur Theorem, there exist real numbers $\lambda_{k,N}^{s}\geq 0$ for  $k=1,2,\ldots,N$ and $N\in\N$  such that $\sum_{k=1}^{N}\lambda_{k,N}^{s}=1$
and
$\sum_{k=1}^{N}\lambda_{k,N}^{s}\dot{x}_{s+k}(\cdot)\longrightarrow\dot{x}(\cdot)$ as $N\to\infty$
in $L^1([t_0,T],\R^n)$ for all $s\in\N$. Then for all $s\in\N$ there exists an increasing sequence  $\{N_{i}^s\}_{i\in\N}$ such that 
\begin{equation*}
z_{i}^{s}(t):=\sum_{k=1}^{N_{i}^s}\lambda_{k,N_{i}^s}^{s}\dot{x}_{s+k}(t)\longrightarrow\dot{x}(t)\;\;\;\tn{as}\;\;\;i\to\infty\;\;\;\;
\tn{a.e.}\;\;\;t\in[t_{0},T]. \label{g1}
\end{equation*} 
For a.e. $t\in[t_0,T]$ we define functions:
$$\eta_i(t):=\mathbbm{1}_{[t_i,T]}(t)\,H_i^{\ast}(t,x_i(t),\dot{x}_i(t)),\quad\eta_{i}^{s}(t):=
\sum_{k=1}^{N_{i}^s}\lambda_{k,N_i^s}^{s}\eta_{s+k}(t),$$
where  $\mathbbm{1}_{[t_i,T]}(\cdot)$ has value $1$ on $[t_i,T]$ but $0$ outside, and 
$$\eta^{s}(t):=\liminf\nolimits_{i\to\infty}\eta_{i}^{s}(t),\qquad\eta(t):=\liminf\nolimits_{s\to\infty}\eta^{s}(t). $$
We observe that $(\,\eta_{i}^{s}(t)\wedge\eta^{s}(t)\wedge\eta(t)\,)\geq -\mu_R(t)$ for a.e. $t\in[t_0,T]$ and all $s,i\in\N$.

Now we show that for all $\tau_0\in[t_0,T]$ the following inequality is true:
\begin{equation}\label{nc11}
\int_{t_0}^{\tau_0}\eta(t)\:dt\leq u(t_0)-u(\tau_0).
\end{equation}
Indeed, fix $\tau_0\in(t_0,T]$ and $\varepsilon>0$. We find $s_0\in\N$ such that $u_{s+k}(t_{s+k})=u_{s+k}(t_0)\leq u(t_0)+\varepsilon$,
$u(\tau_0)-\varepsilon\leq u_{s+k}(\tau_0)$, $t_{s+k}<\tau_0$ for all $s>s_0$, $k\in\mathbb{N}$. 
 Then for all $i\in\mathbb{N}$ and  $s>s_0$ we have
\begin{eqnarray*}
\int_{t_0}^{\tau_0}\eta_i^s(t)\,dt & = & \sum_{k=1}^{N_i^s}\lambda_{k,N_i^s}^s\int_{t_0}^{\tau_0}\eta_{s+k}(t)\,dt\;\;=\;\;\sum_{k=1}^{N_i^s}\lambda_{k,N_i^s}^s\int_{t_{s+k}}^{\tau_0}\eta_{s+k}(t)\,dt\\
& \leq & \sum_{k=1}^{N_i^s}\lambda_{k,N_i^s}^s\int_{t_{s+k}}^{\tau_0}\!\!-\dot{u}_{s+k}(t)\,dt
\;\;=\;\; \sum_{k=1}^{N_i^s}\lambda_{k,N_i^s}^s(u_{s+k}(t_{s+k})-u_{s+k}(\tau_0))\\
& \leq & \sum_{k=1}^{N_i^s}\lambda_{k,N_i^s}^s(u(t_0)-u(\tau_0)+2\varepsilon)
\;\;=\;\; u(t_0)-u(\tau_0)+2\varepsilon.
\end{eqnarray*}
By the Fatou Lemma we have
\begin{eqnarray*}
\int_{t_0}^{\tau_0}\eta(t)\:dt & \leq & \liminf_{s\to\infty} \int_{t_0}^{\tau_0}\eta^s(t)\:dt\\
 & \leq & \liminf_{s\to\infty} \liminf_{i\to\infty}\int_{t_0}^{\tau_0}\eta_i^s(t)\:dt\\
& \leq & u(t_0)-u(\tau_0)+2\varepsilon.
\end{eqnarray*}
Since   $\varepsilon>0$ is arbitrary, we conclude that the inequality \eqref{nc11} is true.

Now we show that for a.e. $t\in[t_0,T]$ we have 
\begin{equation}\label{ink11}
(\dot{x}(t),-\eta(t))\in Q(t,x(t)).
\end{equation}
Indeed, let us fix $t\in(t_0,T)$ and $\varepsilon>0$. Then for all large $s\in\N$ and all $k\in\N$ we have $|x_{s+k}(t)-x(t)|\leq\varepsilon$. Moreover, 
for all large $i\in\mathbb{N}$ we have
$$(\dot{x}_i(t),-\eta_i(t))\in Q_i(t,x_i(t)).$$
For all large $s\in\mathbb{N}$  and all $k\in\mathbb{N}$ we have
$$(\dot{x}_{s+k}(t),-\eta_{s+k}(t))\in Q(t,x(t);\varepsilon),$$
for all large $s\in\mathbb{N}$  and all $i\in\mathbb{N}$ we have
$$(z_i^s(t),-\eta_i^s(t))\in \mathrm{conv}\,Q(t,x(t);\varepsilon).$$
Passing to a subsequence as $i\rightarrow\infty$ we have
$$(\dot{x}(t),-\eta^s(t))\in \overline{\mathrm{conv}}\,Q(t,x(t);\varepsilon),$$
passing to a subsequence as $s\rightarrow\infty$ we have
$$(\dot{x}(t),-\eta(t))\in \overline{\mathrm{conv}}\,Q(t,x(t);\varepsilon).$$
Since   $\varepsilon>0$ is arbitrary, we obtain
$$(\dot{x}(t),-\eta(t))\in \bigcap_{\varepsilon>0}\overline{\mathrm{conv}}\, 
Q(t,x(t);\varepsilon).$$
The latter, together with \eqref{wqcesari}, implies \eqref{ink11}.

Let $v(\tau):=u(t_0)-\int_{t_0}^\tau\eta(t)\,dt$. Then $v(\cdot)$ is an absolutely continuous function on $[t_0,T]$. Moreover, in view of \eqref{nc11},  we have $v(t)\geq u(t)$ for all $t\in[t_0,T]$. Furthermore, in view of \eqref{ink11}, we have $(\dot{x}(t),\dot{v}(t))\in Q(t,x(t))$ for a.e. $t\in[t_0,T]$.

We conclude that $(x,v)(\cdot)$ is an absolutely continuous function defined on $[t_0,T]$ which satisfies \eqref{wqclem-2} and $v_0\leq u_0$. Moreover, $(x_i,u_i)(\cdot)$ converges pointwise to $(x,u)(\cdot)$ along a subsequence. The latter, together with $v(\cdot)\geq u(\cdot)$, implies \eqref{wqclem-3}.
\end{proof}

\begin{Th}\label{thm-lscvvfs-1}
Let  $H_i,H$, $i\in\N$ be as above. Assume that $g_i$ and $g$ are proper, lower semicontinuous functions and \tn{e-$\lim_{i\to\infty}g_i=g$}. Let $V_i$ and $V$ be the value functions associated with $(H_i^{\ast},g_i)$ and $(H^{\ast},g)$, respectively.  Then for every $(t_0,x_0)\in[0,T]\times\R^n$ 
\begin{equation*}
\liminf\nolimits_{\,i\,\to\,\infty}V_i(t_{i0},x_{i0})\geq V(t_0,x_0)\;\;\;\it{for every sequence}\;\;\; (t_{i0},x_{i0})\rightarrow (t_0,x_0).
\end{equation*}
\end{Th}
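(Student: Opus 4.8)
The plan is to prove the lower-semicontinuity inequality by combining the reduction to the variational problem (Corollary~\ref{cor-reduct}) with the closure-type Lemma~\ref{wqclem}. Fix $(t_0,x_0)\in[0,T]\times\R^n$ and a sequence $(t_{i0},x_{i0})\to(t_0,x_0)$. We may pass to a subsequence realizing $\liminf_{i\to\infty}V_i(t_{i0},x_{i0})$ and, discarding terms, assume $V_i(t_{i0},x_{i0})\to L:=\liminf_{i\to\infty}V_i(t_{i0},x_{i0})$; if $L=+\infty$ there is nothing to prove, so assume $L<+\infty$. By Corollary~\ref{cor-reduct} applied to $(H_i^\ast,g_i)$, for each $i$ we have $V_i(t_{i0},x_{i0})=\min_{x(\cdot)}\{g_i(x(T))+\int_{t_{i0}}^T H_i^\ast(t,x(t),\dot x(t))\,dt\mid x(t_{i0})=x_{i0}\}$, and we pick an optimal arc $\kr{x}_i(\cdot)$ attaining this minimum (its existence also follows from Corollary~\ref{cor-reduct}/Theorem~\ref{roz6-thm2}). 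Set $u_{i0}:=V_i(t_{i0},x_{i0})$ and define $u_i(t):=u_{i0}-\int_{t_{i0}}^t H_i^\ast(s,\kr{x}_i(s),\dot{\kr{x}}_i(s))\,ds$, so that $u_i(T)=u_{i0}-\int_{t_{i0}}^T H_i^\ast=g_i(\kr{x}_i(T))$, $(\dot{\kr{x}}_i,\dot u_i)(t)\in Q_i(t,\kr{x}_i(t))$ a.e., and $(\kr{x}_i,u_i)(t_{i0})=(x_{i0},u_{i0})$.

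Next I would set up the hypotheses of Lemma~\ref{wqclem}. Since $u_{i0}=V_i(t_{i0},x_{i0})\to L$, the triple $(t_{i0},x_{i0},u_{i0})\to(t_0,x_0,L)=:(t_0,x_0,u_0)$. To apply Lemma~\ref{wqclem} we need a uniform lower bound $u_i(T)\geq M$: this is where e-convergence enters. By part (i) of e-$\lim_{i\to\infty}g_i=g$ together with Gronwall control $\|\kr{x}_i\|\le R$ (from (C5), as in the proof of Lemma~\ref{wqclem}, using that $H_i^\ast<+\infty$ a.e. along optimal arcs since $u_{i0}$ is finite), the values $\kr{x}_i(T)$ lie in a fixed ball; and since $g$ is proper and lower semicontinuous it is bounded below on that ball, while e-convergence forces $\liminf_i g_i(\kr{x}_i(T))\ge \inf$ (of $g$ on a slightly larger ball) $>-\infty$. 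Hence $u_i(T)=g_i(\kr{x}_i(T))\ge M$ for some constant $M$ and all large $i$. Now Lemma~\ref{wqclem} yields an absolutely continuous $(x,v)(\cdot)$ on $[t_0,T]$ with $(\dot x,\dot v)(t)\in Q(t,x(t))$ a.e., $x(t_0)=x_0$, $v(t_0)=v_0\le u_0=L$, and a subsequence $(x_{i_k},u_{i_k})$ with $x_{i_k}(T)\to x(T)$ and $\lim_k u_{i_k}(T)\le v(T)$.

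Finally I would translate this back. From $(\dot x,\dot v)\in Q(t,x(t))$ a.e. we get $-\dot v(t)\ge H^\ast(t,x(t),\dot x(t))$ a.e., so $v(t_0)-v(T)\ge \int_{t_0}^T H^\ast(t,x(t),\dot x(t))\,dt$, i.e. $v(T)\le v(t_0)-\int_{t_0}^T H^\ast\le L-\int_{t_0}^T H^\ast(t,x(t),\dot x(t))\,dt$. On the other hand, e-$\lim_{i\to\infty}g_i=g$ part (i) applied to $x_{i_k}(T)\to x(T)$ gives $g(x(T))\le\liminf_k g_{i_k}(x_{i_k}(T))=\liminf_k u_{i_k}(T)\le v(T)$. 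Combining,
\[
V(t_0,x_0)\;\le\; g(x(T))+\int_{t_0}^T H^\ast(t,x(t),\dot x(t))\,dt\;\le\; g(x(T))+\big(L-v(T)\big)\;\le\; L,
\]
where the first inequality uses that $x(\cdot)$ with $x(t_0)=x_0$ is admissible in the variational problem $(\mathcal V_{t_0,x_0})$ whose minimum is $V(t_0,x_0)$ by Corollary~\ref{cor-reduct}, and the last uses $g(x(T))\le v(T)$. This is exactly $\liminf_i V_i(t_{i0},x_{i0})=L\ge V(t_0,x_0)$.

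The main obstacle I anticipate is the uniform lower bound $u_i(T)\ge M$ needed to invoke Lemma~\ref{wqclem}: one must be careful that the a priori bound $\|\kr{x}_i\|\le R$ is genuinely uniform in $i$ (it is, because the growth constant $c(\cdot)$ is common to all $H_i$ and $x_{i0}$ stays in a fixed ball), and then use properness plus lower semicontinuity of the limit $g$—not of the $g_i$—together with part (i) of e-convergence to bound $g_i(\kr{x}_i(T))$ below. A secondary technical point is handling the case $V_i(t_{i0},x_{i0})=+\infty$ for infinitely many $i$, which is harmless since then $L$ may be recomputed along the complementary subsequence, and the edge case $t_0=T$, which is immediate.
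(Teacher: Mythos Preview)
Your proposal is correct and follows essentially the same route as the paper: pass to a subsequence realizing the $\liminf$, pick optimal arcs $\kr{x}_i(\cdot)$ via Corollary~\ref{cor-reduct}, define $u_i(\cdot)$ so that $(\dot{\kr{x}}_i,\dot u_i)\in Q_i$ with $u_i(T)=g_i(\kr{x}_i(T))$, use the uniform Gronwall bound $\|\kr{x}_i\|\le R$ together with e-convergence of $g_i$ to secure the lower bound $u_i(T)\ge M$, apply Lemma~\ref{wqclem}, and close with the chain $L\ge v(t_0)\ge v(T)+\int_{t_0}^T H^\ast\ge g(x(T))+\int_{t_0}^T H^\ast\ge V(t_0,x_0)$. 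The paper treats the boundary case $t_0=T$ explicitly (since Lemma~\ref{wqclem} is stated for arcs on $[t_0,T]$ with $t_0<T$), so you should spell out that short argument rather than leave it as ``immediate''; otherwise your proof matches the paper's in structure and in the identification of the key obstacle.
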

\begin{proof}
Fix $(t_{i0},x_{i0})\rightarrow (t_0,x_0)$. Let $\Delta:=\liminf\nolimits_{\,i\,\to\,\infty}V_i(t_{i0},x_{i0})$. We show that $V(t_0,x_0)\leq\Delta$. Without loss of generality we may assume $t_{i0}<T$ and $\Delta<+\infty$. Because of the definition $\Delta$, there exists  a subsequence
(we do not relabel) such that $V_i(t_{i0},x_{i0})\to \Delta$. Hence for all large $i\in\N$ we have $V_i(t_{i0},x_{i0})<+\infty$. In view of Corollary \ref{cor-reduct} there exist a sequence of functions $x_i(\cdot)\in\mathcal{A}([t_{i0},T],\R^{n})$ such that $x_i(t_{i0})=x_{i0}$ and
\begin{equation}\label{cifwpwd}
V_i(t_{i0},x_{i0})\;=\;g_i(x_i(T))+\int_{t_{i0}}^TH_i^{\ast}(t,x_i(t),\dot{x}_i(t))\,dt.
\end{equation}
Since $V_i(t_{i0},x_{i0})<+\infty$ for all large $i\in\N$, we have  $H_i^{\ast}(t,x_i(t),\dot{x}_i(t))<+\infty$ for a.e. $t\in[t_{i0},T]$ and all large $i\in\N$. The latter, together with (C5), implies that $|\dot{x}_i(t)|\leq c(t)(1+|x_i(t)|)$ for a.e. $t\in[t_{i0},T]$ and all large $i\in\N$. Thus, because of Gronwall’s Lemma, for all large $i\in\N$,
\begin{equation*}
\|x_i(\cdot)\|\leq\left(\sup\nolimits_{i\in\N}\,|x_{i0}|\,+\int_0^Tc(t)\,dt\right)\,\exp\left(\int_0^Tc(t)\,dt\right)=:R.
\end{equation*} 
Hence $|\dot{x}_i(t)|\leq(1+R)\,c(t)$ for a.e. $t\in[t_{i0},T]$ and all large $i\in\N$. By our assumptions we conclude that $H_i^{\ast}(t,x_i(t),\dot{x}_i(t))\geq -\mu_R(t)$ for a.e. $t\in[t_{i0},T]$ and all large $i\in\N$. Because of \tn{e-$\lim_{i\to\infty}g_i=g$}, there exists a constant $M$ such that $g_i(x)\geq M$ for all $x\in\B_R$ and all large $i\in\N$. Therefore, in view of \eqref{cifwpwd}, for all large $i\in\N$, we have 
\begin{equation*}
V_i(t_{i0},x_{i0})\;\geq\;M-\int_0^T\mu_R(t)\,dt \;>\;-\infty.
\end{equation*}
Hence $\Delta>-\infty$.  To prove theorem  we consider two cases:

\bf{Case 1.} Let us consider $t_0<T$. We put $u_{i0}:=V_i(t_{i0},x_{i0})$ for all large $i\in\N$ and $u_0:=\Delta$. We define a sequence of functions $u_i(\cdot)\in\mathcal{A}([t_{i0},T],\R)$ for all large $i\in\N$ by 
\begin{equation*}
u_i(t)\;=\;g_i(x_i(T))+\int_{t}^TH_i^{\ast}(s,x_i(s),\dot{x}_i(s))\,ds.
\end{equation*}
We observe that $-\dot{u}_i(t)=H_i^{\ast}(t,x_i(t),\dot{x}_i(t))$ for a.e. $t\in[t_{i0},T]$ and 
$u_i(t_{i0})=V_i(t_{i0},x_{i0})=u_{i0}$ for all large $i\in\N$. It means that the sequence  $(x_i,u_i)(\cdot)$ satisfies \eqref{wqclem-1} for all large $i\in\N$. Moreover, $u_i(T)=g_i(x_i(T))\geq M$ for all large $i\in\N$. Therefore, because of Lemma \ref{wqclem}, there exist   a function $(x,v)(\cdot)\in \mathcal{A}([t_{0},T],\R^{n}\times\R)$ and a real number $v_0\leq u_0$ such that \eqref{wqclem-2} holds.
In view of \eqref{wqclem-2}, we deduce that
\begin{eqnarray}
\Delta \;\;=\;\; u_0 &\geq & v_0\;\;=\;\;v(t_0)\;\;=\;\;v(T)+\int_{t_0}^T\!\!-\dot{v}(t)\,dt\nonumber\\[-3mm]
&\geq & v(T)+\int_{t_0}^TH^{\ast}(t,x(t),\dot{x}(t))\,dt\nonumber\\
&\geq & v(T)-g(x(T))+V(t_0,x_0).\label{deltacr1}
\end{eqnarray}
Moreover, in view of \eqref{wqclem-3} and \tn{e-$\lim_{i\to\infty}g_i=g$}, we deduce that
\begin{equation}\label{deltacr2}
v(T)\;\geq\; \lim_{k\to\infty}u_{i_k}(T)\;=\;\lim_{k\to\infty}g_{i_k}(x_{i_k}(T))\;\geq\; g(x(T)).
\end{equation}
Combining inequalities \eqref{deltacr1} and \eqref{deltacr2} we obtain $\Delta\geq V(t_0,x_0)$.

\bf{Case 2.} Let us consider $t_0=T$. We observe that
\begin{equation*}
|x_i(t_{i0})-x_i(T)|\;\leq\;\int_{t_{i0}}^T|\dot{x}_i(t)|\;\leq\; (1+R)\int_{t_{i0}}^Tc(t)\,dt\;\to\;0.
\end{equation*}
The latter, together with $x_i(t_{i0})=x_{i0}\to x_0$, implies that $x_i(T)\to x_0$. Therefore, in view of \eqref{cifwpwd} and \tn{e-$\lim_{i\to\infty}g_i=g$}, we have 
\begin{eqnarray*}
\Delta=\lim_{i\to\infty}V_i(t_{i0},x_{i0}) &\geq & \liminf_{i\to\infty} g_i(x_i(T))+\liminf_{i\to\infty}\int_{t_{i0}}^TH_i^{\ast}(t,x_i(t),\dot{x}_i(t))\,dt\\
&\geq & \liminf_{i\to\infty} g_i(x_i(T))+\liminf_{i\to\infty}\int_{t_{i0}}^T\!\!-\mu_R(t)\,dt\\[2mm]
&\geq & g(x_0)\,=\,V(T,x_0)\,=\,V(t_0,x_0).
\end{eqnarray*}
This completes the proof of the theorem.
\end{proof}

\begin{Rem}\label{rem-lscvvfs}
Consider  $H$ as above. Assume that $g$ is a proper, lower semicontinuous function. Let $V$ be the value function associated with $(H^{\ast},g)$. Applying Theorem \ref{thm-lscvvfs-1} to $H_i:=H$, $g_i:=g$, we obtain that the value function $V$ is lower semicontinuous.
\end{Rem}

\subsection{Remarks}\label{rem-conclude}
In the proof of stability of value functions we used the formula~\eqref{fwfl} on the value function as well as the formula \eqref{fwwp}. We do that, because formulas \eqref{fwwp} and~\eqref{fwfl} have  advantages and drawbacks.

The advantages of the formula \eqref{fwfl} are regularities of functions
$f$ and $l$ such that: a sublinear growth of the function $f$ with respect to the state variable, a sublinear growth of the function $l$ with respect to the control variable and  local Lipschitz continuity with respect to the state variable for both functions $f$ and $l$. These regularities of functions $f$ and $l$ together with the extra-property allow us to prove upper semicontinuity of  value functions. On the other hand, the problems appear in the proof of  lower semicontinuity of value functions. They can be overcome using convexity and coercivity of the function $l$ with respect to the control variable; see \cite{B-DM,CO-76}. However, in our case the function  $l$ does not possess these properties and it is a drawback of the formula \eqref{fwfl}.   

Lower semicontinuity of  value functions is proven using the formula \eqref{fwwp}. 
It is \linebreak possible due to convexity and coercivity of the conjugate $H^{\ast}(t,x,\cdot)$. These properties of the conjugate $H^{\ast}$  are advantages of the formula  \eqref{fwwp}. The example of the Hamiltonian $H$\linebreak in Section \ref{dfrentger} shows that the  conjugate $H^{\ast}$ is an  extended-real-valued function and\linebreak discontinuous on the effective domain $\D H^{\ast}$.  These 
properties of the conjugate $H^{\ast}$  are drawbacks of the formula \eqref{fwwp}.

\subsection{Lipschitz continuity of the value function}\label{thm-reguvalufun-0} Assume that \tn{(H1)-(H4)} and \tn{(HLC)} hold with integrable functions $c(\cdot)$, $k_R(\cdot)$, $H(\cdot,0,0)$.  Let $g$ be a locally Lipschitz function.  We consider the representation $(\R^{n+1}\!\!\!,f,l)$ of $H$ defined as in  Theorem~\ref{th-rprez-glo12}. Let $M>0$ and
\begin{align*}
 R&:=\left(M+\int_0^Tc(t)\,dt\right)\,\exp\left(\int_0^Tc(t)\,dt\right),\\
C_M&:=\left(D_R+\int_0^T\omega_{\scriptscriptstyle R}[t]\,dt\right)\,\exp\left(\int_0^T\omega_{\scriptscriptstyle R}[t]\,dt\right),
\end{align*}
where $\omega_{\scriptscriptstyle R}[\cdot]=2|H(\cdot,0,0)|+(10\,(n+1)\,k_R(\cdot)+2c(\cdot))(1+R)$ and $D_R$ denotes the Lipschitz constant of $g$ on $B_R$. Let us consider the following function
\begin{equation*}
\alpha_{M}(t):=(1+C_{M})\int_0^t\omega_{\scriptscriptstyle R}[s]\,ds\;\;\;\tn{for all}\;\;\;t\in[0,T].
\end{equation*}

\begin{Prop}\label{thmncfop3}
Consider $(\R^{n+1}\!\!\!,f,l)$ as above. Assume that $g$ is a real-valued lower semicontinuous function. If  $V$ is the value function associated with  $(\R^{n+1}\!\!\!,f,l,g)$, then $V$ is a real-valued function on  $[0,T]\times\R^n$.
\end{Prop}
\begin{proof}
Fix $t_0\in[0,T]$ and $x_0\in\R^n$. We show that $-\infty< V(t_0,x_0)<+\infty$. Observe that the first inequality  follows from \eqref{roz6-l2}. It remains to prove the second inequality.  Let $\tilde{a}(\cdot)\equiv 0$. Then there exists $\tilde{x}(\cdot)\in \mathcal{A}([t_0,T],\R^n)$ such that $\dot{\tilde{x}}(t)=f(t,\tilde{x}(t),\tilde{a}(t))$ for a.e. $t\in [t_0, T]$ and $x(t_0)=x_0$.  In view of Theorem~\ref{th-rprez-glo12} (A2) and  (HLC)  we get that
\begin{eqnarray*}
\nonumber l(t,\tilde{x}(t),\tilde{a}(t)) &\leq & 2|H(t,\tilde{x}(t),0)|+2c(t)(1+|\tilde{x}(t)|)+3|\tilde{a}(t)|\\
&\leq & 2|H(t,0,0)|+2k_{\|\tilde{x}\|}(t)\,\|\tilde{x}\|+2c(t)(1+\|\tilde{x}\|)=:\mu(t)
\end{eqnarray*}
for a.e. $t\in [t_0, T]$.  Thus, $V(t_0,x_0)\leq g(\tilde{x}(T))+\|\mu\|_{L^1}<+\infty$.
\end{proof}

\begin{proof}[Proof of Theorem \ref{lip-value-function-2}]
Fix $t_0,s_0\in[0,T]$ and $x_0,y_0\in\B_M$. Then, by Corollary \ref{cor-reduct} there exists the optimal arc $(\bar{x},\bar{a})(\cdot)$ of $V(t_0,x_0)$ defined on $[t_0,T]$. We extend $\bar{a}(\cdot)$ from $[t_0,T]$ to $[0,T]$ by setting $\bar{a}(t)=0$ for $t\in[0,t_0]$. Next, because of the sublinear growth of $f$, we extend  $\bar{x}(\cdot)$ from $[t_0,T]$ to $[0,T]$ such that $(\bar{x},\bar{a})(\cdot)\in\emph{S}_f(t_0,x_0)$. Now we choose $y(\cdot)$ defined on $[0,T]$ such that $(y,\bar{a})(\cdot)\in\emph{S}_f(s_0,y_0)$. By Gronwall's Lemma we get $ \|\bar{x}\|\vee\|y\|\leq R$,
\begin{align}
&\|\bar{x}-y\|\leq\Big(|x_0-y_0|+\int\limits_{\scriptscriptstyle t_0\wedge s_0}^{\scriptscriptstyle t_0\vee s_0}\omega_{\scriptscriptstyle R}[t]\,dt\Big)\,\exp\Big(\int\limits_{\scriptscriptstyle 0}^{\scriptscriptstyle T}\omega_{\scriptscriptstyle R}[t]\,dt\Big),\label{rofvflem11}\\
&\int\limits_{\scriptscriptstyle  t_0\wedge s_0}^{\scriptscriptstyle T}\big|l[\bar{x}](t)-l[y](t)\big|\,dt\;\leq \|\bar{x}-y\|\int\limits_{\scriptscriptstyle 0}^{\scriptscriptstyle T}\omega_{\scriptscriptstyle R}[t]\,dt,\label{rofvflem22}
\end{align}
where $l[x](\cdot):=l(\cdot,x(\cdot),\bar{a}(\cdot))$. To prove theorem  we consider two cases:

\bf{Case 1.} Let $t_0\leq s_0$. By  Theorem~\ref{th-rprez-glo12} (A2) we have $l[\bar{x}](t)\geq-\omega_{\scriptscriptstyle R}[t]$ for all $t\in[0,T]$. The latter, together with \eqref{rofvflem11} and \eqref{rofvflem22}, implies 
\begin{eqnarray*}
V(s_0,y_0)-V(t_0,x_0) &\leq & g(y(T))+\int\limits_{\scriptscriptstyle s_0}^{\scriptscriptstyle T}l[y](t)\,dt-g(\bar{x}(T))-\int\limits_{\scriptscriptstyle t_0}^{\scriptscriptstyle T}l[\bar{x}](t)\,dt\\
&\leq & |g(\bar{x}(T))-g(y(T))|+ \int\limits_{\scriptscriptstyle  s_0}^{\scriptscriptstyle T}\big|l[\bar{x}](t)-l[y](t)\big|\,dt-\int\limits_{\scriptscriptstyle t_0}^{\scriptscriptstyle s_0}l[\bar{x}](t)\,dt\\
&\leq & \|\bar{x}-y\|\Big(D_R+\int\limits_{\scriptscriptstyle 0}^{\scriptscriptstyle T}\omega_{\scriptscriptstyle R}[t]\,dt\Big)+ \int\limits_{\scriptscriptstyle t_0}^{\scriptscriptstyle s_{0}}\omega_{\scriptscriptstyle R}[t]\,dt\\
&\leq & C_M|x_0-y_0|+(1+C_M)\int\limits_{\scriptscriptstyle t_{0}}^{\scriptscriptstyle s_0}\omega_{\scriptscriptstyle R}[t]\,dt\\[1mm]
&=& C_M|x_0-y_0|+ |\alpha_M(t_0)-\alpha_M(s_0)|.
\end{eqnarray*}

\bf{Case 2.} Let $s_0\leq t_0$. Then  $\bar{a}(t)=0$ for all $t\in[s_0,t_0]$. By Theorem~\ref{th-rprez-glo12} (A2) we have $l[\bar{x}](t)\leq\omega_{\scriptscriptstyle R}[t]+3|\bar{a}(t)|$ for all $t\in[0,T]$.  The latter, together with \eqref{rofvflem11} and \eqref{rofvflem22}, implies   
\begin{eqnarray*}
V(s_0,y_0)-V(t_0,x_0) &\leq & g(y(T))+\int\limits_{\scriptscriptstyle s_0}^{\scriptscriptstyle T}l[y](t)\,dt-g(\bar{x}(T))-\int\limits_{\scriptscriptstyle t_0}^{\scriptscriptstyle T}l[\bar{x}](t)\,dt\\
&\leq & |g(\bar{x}(T))-g(y(T))|+\int\limits_{\scriptscriptstyle s_0}^{\scriptscriptstyle t_0}l[\bar{x}](t)\,dt+ \int\limits_{\scriptscriptstyle  t_0}^{\scriptscriptstyle T}\big|l[\bar{x}](t)-l[y](t)\big|\,dt\\
&\leq & \|\bar{x}-y\|\,\Big(D_R+\int\limits_{\scriptscriptstyle 0}^{\scriptscriptstyle T}\omega_{\scriptscriptstyle R}[t]\,dt\Big)+ \int\limits_{\scriptscriptstyle s_0}^{\scriptscriptstyle t_{0}}\omega_{\scriptscriptstyle R}[t]\,dt+3\int\limits_{\scriptscriptstyle s_0}^{\scriptscriptstyle t_{0}}|\bar{a}(t)|\,dt
\end{eqnarray*}

\vspace*{-5mm}
\pagebreak
\begin{eqnarray*}
\hspace{1.8cm}&\leq & C_M|x_0-y_0|+(1+C_M)\int\limits_{\scriptscriptstyle s_{0}}^{\scriptscriptstyle t_0}\omega_{\scriptscriptstyle R}[t]\,dt+3\int\limits_{\scriptscriptstyle s_0}^{\scriptscriptstyle t_{0}}|\bar{a}(t)|\,dt\\[1mm]
&=& C_M|x_0-y_0|+ |\alpha_M(t_0)-\alpha_M(s_0)|.
\end{eqnarray*}

In view of  Case 1 and Case 2, we conclude that the inequality \eqref{nlfw-12} is true. If $c(\cdot)$, $k_R(\cdot)$, $H$ are continuous, so is $\omega_{\scriptscriptstyle R}(\cdot)$. In this case, we show that $V$  is Lipschitz continuous on $[0,T]\!\times\!\B_R$. Because of  \eqref{nlfw-12}, it suffices to note that  $|\alpha_M(t_0)\!-\!\alpha_M(s_0)|\leq (1\!+\!C_M)\|\omega_R\||t_0\!-\!s_0|$ for all $t_0,s_0\in[0,T]$, $x_0,y_0\in\B_M$. This completes the proof of the theorem.
\end{proof}


\section{Concluding remarks}\label{conrem-conclude}

\noindent In the case of  representations with a compact control set, we knew what type of\linebreak regularity we should expect, because this kind of representations had been considered by Frankowska-Sedrakyan and  Rampazzo. Moreover,  we found their broad applications in the monograph Bardi and Capuzzo-Dolcetta.  However, in Theorem \ref{th-rprez-glo12} the case of regularities of representations with the unbounded control set is much more complicated.  Rampazzo-Sartori applied such representations in studies on regularity of value functions. However, they assumed  coercivity of the function  $l(t,x,\cdot)$. Unfortunately, the function $l(t,x,\cdot)$ from our faithful representation does  not have this property. Therefore, their proofs cannot be applied  in our case. This problem has been solved due to the extra-property and upper-boundedness of the  function $l$. Investigating applications of  representations with the unbounded control set leads us to the fundamental relation between variational and optimal control problems; see Theorem \ref{thm-reduct}. The correlation between variational and\linebreak optimal control problems has not been used earlier. For the first time we have used this  correlation in the proof of  Theorem  \ref{cor-rep-stab4}. Significant differences between representations\linebreak with compact and unbounded control sets triggered us to write two distinct papers\linebreak containing results related to them.




\end{document}